\documentclass[11pt]{article}

\usepackage[T1]{fontenc}
\usepackage{lmodern}
\usepackage{amsmath,amsfonts,amsthm,amssymb,mathtools}
\usepackage{mathrsfs}
\usepackage{enumitem}
\usepackage[margin=2.5cm]{geometry}
\usepackage{microtype}
\usepackage{xcolor}
\usepackage{hyperref}
\hypersetup{
    colorlinks=true,
    citecolor=blue,
    linkcolor=blue,
    urlcolor=blue
}

\newtheorem{theorem}{Theorem}[section]
\newtheorem{lemma}[theorem]{Lemma}

\newtheorem{corollary}[theorem]{Corollary}
\newtheorem{conjecture}[theorem]{Conjecture}
\theoremstyle{definition}

\allowdisplaybreaks[4]
\numberwithin{equation}{section}

\title{\Large\bfseries A complete solution to the Tokushige measure conjecture and its stability%
\thanks{Lihua Feng was supported by the NSFC (Nos. 12271527 and 12471022)
and NSF of Qinghai Province (No. 2025-ZJ-902T).
E-mail addresses: \url{wuyjmath@163.com} (Y. Wu),
\url{fenglh@163.com} (L. Feng).}}
\author{
{\small Yongjiang Wu,\quad Lihua Feng\thanks{Corresponding author.}}\\[2mm]
\small School of Mathematics and Statistics, HNP-LAMA,
Central South University\\
\small Changsha, Hunan, 410083, China
}
\date{\today}

\begin{document}
\maketitle

\begin{abstract}
We resolve three conjectures proposed by Tokushige in 2013 about cross $t$-intersecting families of subsets and integer sequences. For $0<p<1$ and $\mathcal F\subseteq2^{[n]}$, define the
$p$-biased measure by $\mu_p(\mathcal F)=\sum_{F\in\mathcal F}p^{|F|}(1-p)^{n-|F|}$. Two families $\mathcal F_1,\mathcal F_2\subseteq2^{[n]}$ are cross $t$-intersecting if $|F_1\cap F_2|\geq t$ for every $F_1\in\mathcal F_1$ and $F_2\in\mathcal F_2$. We prove that, for every $n\geq t\geq 2$ and $0<p_1,p_2\leq1/(t+1)$,  such a  pair satisfies $\mu_{p_1}(\mathcal F_1)\mu_{p_2}(\mathcal F_2)\leq(p_1p_2)^t$. When $p_1,p_2<1/(t+1)$, equality holds if and only if both families are the same $t$-star $\mathcal S_T=\{F\subseteq[n]:T\subseteq F\}$ for some $T\in\binom{[n]}{t}$.
 Together with the previously known case $t=1$,
 this completely resolves Tokushige's measure conjecture.
We also prove that every pair whose measure product is close to the maximum must be close to a common $t$-star. More precisely, if  $p_1,p_2<1/(t+1)$ and $\mu_{p_1}(\mathcal F_1)\mu_{p_2}(\mathcal F_2)>(1-\varepsilon)^2(p_1p_2)^t$, then there exists $T\in\binom{[n]}{t}$ such that $\mu_{p_i}(\mathcal F_i\mathbin{\triangle}\mathcal S_T)<C\varepsilon$ for $i=1,2$, where $C$ depends only on $t,p_1,p_2$. This improves Tokushige's conjectured $C\sqrt{\varepsilon}$ estimate to $C\varepsilon$.

For integer sequences, we prove that if every sequence in $\mathcal H_1\subseteq[m]^n$ agrees with every sequence in $\mathcal H_2\subseteq[m]^n$ in at least $t$ coordinates, then $|\mathcal H_1||\mathcal H_2|\leq m^{2(n-t)}$ for all $n\geq t\geq1$ and $m\geq t+1$. We further obtain a more general result in which a separate agreement requirement is imposed for each possible value. This extends a theorem of Frankl and Kupavskii and  recovers
their earlier cross intersection--union product theorem.
\end{abstract}

{\bf AMS Classification}:  05D05; 05C65 

{\bf Keywords}:
Cross $t$-intersecting families; Biased measures; Stability; Integer
sequences

\tableofcontents

\section{Introduction}

Let $[n]=\{1,2,\ldots,n\}$, and let $2^{[n]}$ denote its power set. For $0\leq k\leq n$,
write $\binom{[n]}{k}$ for the family of all $k$-element subsets of
$[n]$. A family $\mathcal F\subseteq2^{[n]}$ is called \textit{$k$-uniform} if
$\mathcal F\subseteq\binom{[n]}{k}$, and it is called
\textit{$t$-intersecting} if
\[
 |F\cap F'|\geq t
 \qquad\text{for all }F,F'\in\mathcal F.
\]
When $t=1$, the family is simply called \textit{intersecting}.
Erd\H{o}s, Ko and Rado \cite{ErdosKoRado} proved that, if $n\geq2k$,
then every intersecting family
$\mathcal F\subseteq\binom{[n]}{k}$ satisfies
$
 |\mathcal F|\leq\binom{n-1}{k-1}.
$
They also showed that, for sufficiently large $n$, every $t$-intersecting
family $\mathcal F\subseteq\binom{[n]}{k}$ satisfies
$
 |\mathcal F|\leq\binom{n-t}{k-t}.
$ The bound is attained by the
family of all $k$-sets containing a fixed $t$-set. Frankl
\cite{Frankl1978} proved that, for $t\geq15$, the same bound holds
whenever $n\geq(t+1)(k-t+1)$, and Wilson \cite{Wilson} subsequently
removed the restriction on $t$. Ahlswede and Khachatrian
\cite{AhlswedeKhachatrian} later determined the maximum size of a
$t$-intersecting uniform family for all parameters in their Complete
Intersection Theorem. 
For further background, see Ellis \cite{EllisSurvey} and Frankl and
Tokushige \cite{FranklTokushigeSurvey,FranklTokushigeBook}.

Two families $\mathcal F_1,\mathcal F_2\subseteq2^{[n]}$ are called
\textit{cross $t$-intersecting} if
\[
 |F_1\cap F_2|\geq t
 \qquad\text{for all }F_1\in\mathcal F_1
 \text{ and }F_2\in\mathcal F_2.
\]
When $t=1$, they are simply called \textit{cross-intersecting}. For uniform
families, a natural extremal problem is to determine the largest
possible value of $|\mathcal F_1||\mathcal F_2|$. Pyber
\cite{Pyber} proved that, if $n\geq2k$ and
$\mathcal F_1,\mathcal F_2\subseteq\binom{[n]}{k}$ are
cross-intersecting, then
$
 |\mathcal F_1||\mathcal F_2|
 \leq\binom{n-1}{k-1}^{2}.
$
Matsumoto and Tokushige \cite{MatsumotoTokushige} obtained the sharp
extension to different uniformities. Product-type extensions involving several families were studied by Tokushige \cite{TokushigeProduct}.
For general $t$, the natural EKR-type question asks whether, in the
range $n\geq(t+1)(k-t+1)$, every cross $t$-intersecting pair
$\mathcal F_1,\mathcal F_2\subseteq\binom{[n]}{k}$ satisfies
$
 |\mathcal F_1||\mathcal F_2|
 \leq\binom{n-t}{k-t}^{2}.
$
The bound is attained when both families consist of all $k$-sets
containing the same fixed $t$-set. Tokushige \cite{TokushigeCross} first established this bound in a
restricted asymptotic regime. Using the
eigenvalue method, Tokushige \cite{TokushigeEigenvalue} subsequently
extended the result to
$
 \frac{k}{n}<1-2^{-1/t}.
$
Frankl, Lee, Siggers and Tokushige
\cite{FranklLeeSiggersTokushige} obtained important partial results
towards the sharp range. Zhang and Wu \cite{ZhangWu} subsequently
proved the sharp range result for every $t\geq3$, and Tanaka and
Tokushige \cite{TanakaTokushige} completed the remaining case $t=2$.
Extensions to different uniformities appear in
\cite{ChenLiWuZhang,HeLiWuZhang}, while size-sensitive and
nontrivial refinements were obtained in
\cite{FranklKupavskiiSizeSensitive,WuXiong}.

These uniform results lead naturally to the corresponding
biased measure problem on $2^{[n]}$. In this setting, the two families may be assigned different biases, and one seeks to maximize the product of
their measures subject to the cross $t$-intersection condition.

\subsection{The biased measure conjecture and stability}

For $0<p<1$ and $\mathcal F\subseteq 2^{[n]}$, define the
\textit{$p$-biased measure} or \textit{product measure} of $\mathcal F$ by
\[
 \mu_p(\mathcal F)
 =
 \sum_{F\in\mathcal F}p^{|F|}(1-p)^{n-|F|}.
\]
Equivalently, if $X_p$ is obtained by including each element of
$[n]$ independently with probability $p$, then
\[
 \mu_p(\mathcal F)=\Pr(X_p\in\mathcal F).
\]
The study of intersection problems under biased measures was initiated
by Fishburn, Frankl, Freed, Lagarias and Odlyzko
\cite{FishburnFranklFreedLagariasOdlyzko}. Friedgut
\cite{Friedgut} later established uniqueness and stability results
for $t$-intersecting families under biased measures. Filmus
\cite{Filmus} subsequently determined the maximum
$\mu_p$-measure of a $t$-intersecting family and characterized all
extremal families for every $0<p<1$.

For $T\in\binom{[n]}{t}$, define
\[
 \mathcal S_T
 =
 \{F\subseteq[n]:T\subseteq F\}.
\]
The family $\mathcal S_T$ is called the \textit{$t$-star} with center $T$, and
$
 \mu_p(\mathcal S_T)=p^t.
$
Friedgut \cite{Friedgut} proved that, if
$0<p<1/(t+1)$ and $\mathcal F\subseteq2^{[n]}$ is
$t$-intersecting, then
\[
 \mu_p(\mathcal F)\leq p^t,
\]
with equality if and only if
$\mathcal F=\mathcal S_T$ for some
$T\in\binom{[n]}{t}$.
The threshold $1/(t+1)$ is sharp. If $n\geq t+2$, let
\[
 \mathcal A_1(n,t)
 =
 \{F\subseteq[n]:|F\cap[t+2]|\geq t+1\}.
\]
This family is $t$-intersecting, and
\[
 \mu_p\bigl(\mathcal A_1(n,t)\bigr)-\mu_p(\mathcal S_T)
 =
 p^t(1-p)\bigl((t+1)p-1\bigr).
\]
Hence, $\mathcal A_1(n,t)$ has the same measure as a $t$-star at
$p=1/(t+1)$ and a larger measure when $p>1/(t+1)$. Thus, the
$t$-star is no longer unique at the endpoint and is not extremal
beyond it.

For cross $t$-intersecting families measured with possibly different
biases $p_1$ and $p_2$, a common $t$-star provides the natural
extremal example. Indeed, for every $T\in\binom{[n]}{t}$, the pair
$(\mathcal S_T,\mathcal S_T)$ is cross $t$-intersecting and satisfies
\[
 \mu_{p_1}(\mathcal S_T)\mu_{p_2}(\mathcal S_T)
 =
 (p_1p_2)^t.
\]
Motivated by Friedgut's result and this example, Tokushige
\cite[Conjecture~1.10]{Tokushige} proposed the following conjecture.

\begin{conjecture}[Tokushige \cite{Tokushige}]\label{conj:product-measure}
Let $n \geq t \geq 1$ be integers and let $p_1, p_2 \in (0, \frac{1}{t+1})$. Let $\mathcal{F}_1, \mathcal{F}_2 \subseteq 2^{[n]}$ be cross $t$-intersecting.  Then $$\mu_{p_1}(\mathcal{F}_1)\mu_{p_2}(\mathcal{F}_2) \leq (p_1p_2)^t,$$ with equality if and only if  $\mathcal{F}_1 = \mathcal{F}_2 = \mathcal S_T$ for some $T \in \binom{[n]}{t}$.
\end{conjecture}

When $p_1=p_2=p$, Tokushige \cite{Tokushige} verified the
conjecture for every $t\geq1$ whenever
$p<1-2^{-1/t}$ and established a corresponding stability theorem.
Filmus \cite[Theorem~3.28 and Section~10.1]{FilmusThesis} obtained
the same extremal result in this range, extended the product bound
to the endpoint $p=1-2^{-1/t}$, and, for $t\geq2$, asked whether
$p^{2t}$ remains the sharp bound throughout $p\leq1/(t+1)$.
Frankl, Lee, Siggers and Tokushige
\cite{FranklLeeSiggersTokushige} proved the conjecture when
$p_1=p_2$ and $t\geq14$, and Tanaka and Tokushige
\cite{TanakaTokushige} subsequently proved it when
$p_1=p_2$ and $t=2$.

For $t=1$, Suda, Tanaka and Tokushige
\cite{SudaTanakaTokushige} proved a stronger result in which the
elements of $[n]$ are chosen independently, with their inclusion
probabilities allowed to vary both across elements and between the
two families. Thus, the case $t=1$ of
Conjecture~\ref{conj:product-measure} was already known.

In an earlier version of this paper, we observed that, for $t\geq3$,
the conjectured inequality for arbitrary
$p_1,p_2\in(0,1/(t+1))$ follows, by a limiting argument, from the
corresponding theorem for uniform families \cite{HeLiWuZhang}.
However, this argument does not yield the equality characterization.
 For $t=2$, applying the same argument to the
available uniform result gives the inequality only under additional
restrictions on $p_1$ and $p_2$ \cite{ChenLiWuZhang}.

In this paper, we prove
Conjecture~\ref{conj:product-measure} directly for all $t\geq2$.
Our proof is independent of the uniform
results, covers arbitrary
$p_1,p_2\leq1/(t+1)$, and determines all equality cases in the open
range. Combining this result with the known case $t=1$ gives the
following unified statement.

\begin{theorem}\label{thm:product-measure}
Let $n\geq t\geq1$, let
$0<p_1,p_2\leq1/(t+1)$, and let
$\mathcal F_1,\mathcal F_2\subseteq2^{[n]}$ be cross
$t$-intersecting. Then
\[
 \mu_{p_1}(\mathcal F_1)\mu_{p_2}(\mathcal F_2)
 \leq(p_1p_2)^t.
\]
If $p_1,p_2<1/(t+1)$, equality holds if and only if
$
 \mathcal F_1=\mathcal F_2=\mathcal S_T
$
for some $T\in\binom{[n]}{t}$.
\end{theorem}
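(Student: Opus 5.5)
We argue by induction on $n$, after reducing to shifted, monotone families. First, replacing $\mathcal F_i$ by its up-closure preserves cross $t$-intersection and can only increase the measures. So we may assume both families are monotone. Next we apply simultaneous $(i,j)$-shifts to $\mathcal F_1$ and $\mathcal F_2$. These preserve cross $t$-intersection and $\mu_{p_i}$, so we may assume both families are shifted. The case $t=1$ is the theorem of Suda, Tanaka and Tokushige, so we take $t\ge2$. For the inequality, the endpoint $p_i=1/(t+1)$ follows from the open case by continuity, since $\mu_p(\mathcal F)$ is a polynomial in $p$. Hence the main work is the strict range $p_1,p_2<1/(t+1)$, together with uniqueness.

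\textbf{Induction via links.} For a shifted pair we decompose along the last coordinate $n$. Write $\mathcal F_i(\bar n)=\{F\in\mathcal F_i:n\notin F\}$ and $\mathcal F_i(n)=\{F\setminus\{n\}:n\in F\in\mathcal F_i\}$, both viewed in $2^{[n-1]}$. Then
\[
 \mu_{p_i}(\mathcal F_i)=(1-p_i)\mu_{p_i}(\mathcal F_i(\bar n))+p_i\mu_{p_i}(\mathcal F_i(n)).
\]
The pairs $(\mathcal F_1(\bar n),\mathcal F_2(\bar n))$ are cross $t$-intersecting. By shiftedness, the mixed pairs $(\mathcal F_1(n),\mathcal F_2(\bar n))$ and $(\mathcal F_1(\bar n),\mathcal F_2(n))$ are also cross $t$-intersecting. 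The pair $(\mathcal F_1(n),\mathcal F_2(n))$ is cross $(t-1)$-intersecting, and shiftedness gives more: if $|A\cap B|=t-1$ exactly, we can still push an element into the intersection. Write $a_i=\mu(\mathcal F_i(\bar n))\le b_i=\mu(\mathcal F_i(n))$.

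The plan is to prove a stronger statement by induction: a two-parameter inequality for shifted cross $t$-intersecting pairs that controls mixed terms of the form $a_1b_2$ as well. The natural candidate is the fractional one. Define the weight vector of a shifted family via the AK-type generating sets $\mathcal A_r=\{F:|F\cap[t+2r]|\ge t+r\}$. Then show that for shifted pairs the product is maximized by pairs of the form $(\mathcal A_r,\mathcal A_s)$-type families restricted to a bounded ground set. Concretely, following Frankl's random-walk method, we look at the first time $j$ at which the walk (up-step for an element in $F$, right-step otherwise) hits the line $y=x+t$. A shifted cross $t$-intersecting pair satisfies: for $F_1\in\mathcal F_1$ and $F_2\in\mathcal F_2$, there is $\ell$ with $|F_1\cap[\ell]|+|F_2\cap[\ell]|\ge\ell+t$.

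\textbf{The key step is a two-walk lemma.} Let $X$ and $Y$ be independent $p_1$- and $p_2$-walks. We need to show that if $(\mathcal F_1,\mathcal F_2)$ is shifted and cross $t$-intersecting, then $\Pr(X\in\mathcal F_1)\Pr(Y\in\mathcal F_2)\le (p_1p_2)^t$. The approach is to define, for each family, the stopping depth. Let $\phi_i(\ell,k)$ be the probability that the $p_i$-walk is in $\mathcal F_i$ given that it reaches exactly $(\ell,k)$ at a minimal witness. Then use the constraint $k_1+k_2\ge \ell+t$ coupling the two families, and apply weighted AM–GM / Hölder to split the product. This reduces the problem to proving the scalar inequality
\[
 \sum_{\ell}\binom{\ell}{k}\text{-type sums}\;\le\; p^t \text{ for } p\le 1/(t+1)
\]
for a single "half-threshold" function, namely the probability that a $p$-walk ever reaches $y\ge x\cdot\lambda+t/2$ style lines. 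For $p\le1/(t+1)$, the walk's drift $p/(1-p)\le 1/t$ makes the reaching probability decay geometrically, and the $t$-star bound $p^t$ should emerge as a root of the associated characteristic equation.

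\textbf{Main obstacle and equality.} The hardest step is handling unequal $p_1\neq p_2$. The crossing condition couples the two walks, and a naive split via AM–GM only works for a symmetric threshold, which would give back only the range $p<1-2^{-1/t}$. My first attempt will be to choose asymmetric thresholds $s_1+s_2=t$ (real-valued), optimized so that
\[
 \Pr(X\text{ hits }y=x+s_1)\,\Pr(Y\text{ hits }y=x+s_2)\le (p_1p_2)^t .
\]
This can be done by expressing each hitting probability as $\alpha_i^{s_i}$ with $\alpha_i=p_i/(1-p_i)$ and exploiting convexity in $s$. If that fails, I would fall back on the induction on $n$ via links, using a strengthened hypothesis that involves the pair of cross $(t-1)$-intersecting links.

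For uniqueness, we track equality through the argument. Equality forces every walk inequality to be tight, which forces each $\mathcal F_i$ to be the full $t$-star after shifting. Finally, since shifting is reversible only from stars, an unshifting argument shows that the original families were a common $t$-star $\mathcal S_T$.
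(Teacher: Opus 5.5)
Your preliminary reductions match the paper: passing to up-sets, simultaneous shifting, quoting Suda--Tanaka--Tokushige for $t=1$, continuity at the endpoint, and un-shifting at the end. But the core of the theorem, the open-range inequality for $t\geq2$ with possibly unequal biases, is not proved. You offer a random-walk plan, an induction on $n$ whose strengthened hypothesis is never stated, and an unproved claim that the optimum is a threshold-type pair. That claim does not even name the right candidates: $\{F:|F\cap[t+2r]|\geq t+r\}$ and $\{F:|F\cap[t+2r']|\geq t+r'\}$ are not cross $t$-intersecting when $r\neq r'$, and the relevant pairs are $\{F:|F\cap[s]|\geq i\}$ with $\{F:|F\cap[s]|\geq s+t-i\}$.

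The random-walk step fails for two concrete reasons. First, shiftedness gives only a \emph{pairwise} condition: for each $(F_1,F_2)$ there is some $\ell$, depending on the pair, with $|F_1\cap[\ell]|+|F_2\cap[\ell]|\geq\ell+t$. So your $\phi_i(\ell,k)$ ``at a minimal witness'' is not defined for one family alone. Splitting the threshold into two lines yields only a disjunction: either every member of $\mathcal F_1$ reaches its line, or every member of $\mathcal F_2$ reaches its line. That bounds one of the two measures, not their product, and no AM--GM or H\"older step turns it into a bound on $\mu_{p_1}(\mathcal F_1)\mu_{p_2}(\mathcal F_2)$.

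Second, the scalar target is false. For $p<1/2$, the probability that a $p$-walk of length $n$ reaches height $u\in\{1,2,\ldots\}$ above the diagonal tends to $(p/(1-p))^u$ as $n\to\infty$. So for $p_1=p_2=p$ and any $s_1,s_2\geq0$ with $s_1+s_2=t$, your left-hand side tends to $(p/(1-p))^{\lceil s_1\rceil+\lceil s_2\rceil}\geq(p/(1-p))^{t+1}>p^{2t}$. The relevant characteristic root is $p/(1-p)$, not $p$. Line-crossing also counts late crossings by sets not containing $[t]$, so it loses a factor of order $(1-p)^{-t}$ per family. Already for a single shifted $t$-intersecting family, the walk argument yields only $\mu_p(\mathcal F)\leq(p/(1-p))^t$, which is not sharp. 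Your equality argument (``every walk inequality is tight'') inherits this gap.

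The paper avoids any global splitting and uses a local exchange on minimal generators. This controls the product directly and handles $p_1\neq p_2$ with no extra effort. Take a shifted, mutually inclusion-maximal, product-maximizing pair $(\mathcal A,\mathcal B)$. Let $s$ be the largest element of any minimal generator, and let $\mathcal X_i\subseteq G(\mathcal A)$ and $\mathcal Y_j\subseteq G(\mathcal B)$ be the generators of sizes $i$ and $j=s+t-i$ that contain $s$. Both trades $\bigl(\mathcal A\cup\mathcal D_s(\mathcal X_i^-),\mathcal B\setminus\mathcal D_s(\mathcal Y_j)\bigr)$ and $\bigl(\mathcal A\setminus\mathcal D_s(\mathcal X_i),\mathcal B\cup\mathcal D_s(\mathcal Y_j^-)\bigr)$ stay cross $t$-intersecting. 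Multiplying the two maximality inequalities they produce cancels the unknown boundary masses:
\[
 \frac{\alpha}{\alpha+x^+}\cdot\frac{\beta}{\beta+y^+}\leq\frac{p_1p_2}{(1-p_1)(1-p_2)}.
\]
Upper-shadow counting (refined by Kruskal--Katona when $t=2$) bounds $\alpha/x$ and $\beta/y$ from below and gives $H_{s,i}(p_1)H_{s,j}(p_2)\leq1$. This is false except in a few special situations ($i=t$, $s=t+2$, $(t,s)=(3,6)$, and four small types when $t=2$), which are excluded by separate direct arguments. Hence $s=t$ and the maximizer is the common star. Uniqueness follows because an increasing family whose shift is a star is itself a star. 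Your proposal is missing a mechanism of this kind, one that bounds the product rather than each factor separately.
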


The restriction to the open range in the equality statement is
necessary, since
$(\mathcal A_1(n,t),\mathcal A_1(n,t))$ is also extremal when
$n\geq t+2$ and $p_1=p_2=1/(t+1)$.

The exact theorem identifies the maximizers. It is then natural to
ask whether every nearly extremal pair must be close to one of these
maximizers, with an estimate independent of the dimension. Extending the Friedgut stability theorem  \cite[Theorem~1.6]{Friedgut},
Tokushige \cite[Conjecture~1.11]{Tokushige}  proposed the
following dimension-free stability conjecture for cross
$t$-intersecting families. Here and below,
$\triangle$ denotes symmetric difference.

\begin{conjecture}[Tokushige  \cite{Tokushige}]\label{conj:stability}
Let $t\geq1$ and let
$p_1,p_2\in(0,1/(t+1))$. Then there exists a constant
$C=C(t,p_1,p_2)>0$ such that the following holds for every
$n\geq t$ and every $\varepsilon>0$. If
$\mathcal F_1,\mathcal F_2\subseteq2^{[n]}$ are cross
$t$-intersecting and
\[
 \mu_{p_1}(\mathcal F_1)\mu_{p_2}(\mathcal F_2)
 >
 (1-\varepsilon)^2(p_1p_2)^t,
\]
then there exists $T\in\binom{[n]}{t}$ such that
$
 \mu_{p_i}\bigl(\mathcal F_i\mathbin{\triangle}\mathcal S_T\bigr)
 <
 C\sqrt{\varepsilon}
$ for $ i=1,2.$
\end{conjecture}

We prove the following stronger form, with $C\varepsilon$ in place
of $C\sqrt{\varepsilon}$.

\begin{theorem}\label{thm:stability}
Let $t\geq1$ and let
$p_1,p_2\in(0,1/(t+1))$. Then there exists a constant
$C=C(t,p_1,p_2)>0$ such that the following holds for every
$n\geq t$ and every $\varepsilon>0$. If
$\mathcal F_1,\mathcal F_2\subseteq2^{[n]}$ are cross
$t$-intersecting and
\[
 \mu_{p_1}(\mathcal F_1)\mu_{p_2}(\mathcal F_2)
 >
 (1-\varepsilon)^2(p_1p_2)^t,
\]
then there exists $T\in\binom{[n]}{t}$ such that
$
 \mu_{p_i}\bigl(\mathcal F_i\mathbin{\triangle}\mathcal S_T\bigr)
 <
 C\varepsilon
$ for 
$
i=1,2.
$
\end{theorem}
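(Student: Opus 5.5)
We plan to derive Theorem~\ref{thm:stability} from the exact result Theorem~\ref{thm:product-measure} in three steps: reduce to shifted (and monotone) families, establish a linear-in-$\varepsilon$ stability statement for shifted pairs using a strengthened form of the exact bound, and then transfer the conclusion back through the shifting. First we may assume both families are up-sets, since taking up-closures preserves cross $t$-intersection and only increases the measures. We also note that it is enough to treat $\varepsilon<\varepsilon_0(t,p_1,p_2)$. For larger $\varepsilon$ the conclusion holds trivially once $C\geq 2/\varepsilon_0$, because $\mu_{p_i}(\mathcal F_i\triangle\mathcal S_T)\leq 2$.

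The central quantitative step is a \emph{defect version} of the product bound. Write $\mathcal F_i=\mathcal F_i'\cup\mathcal G_i$, where $\mathcal F_i'=\mathcal F_i\cap\mathcal S_T$ for a suitably chosen $T$. We aim to prove an inequality of the form
\[
\mu_{p_1}(\mathcal F_1)\mu_{p_2}(\mathcal F_2)\leq (p_1p_2)^t\Bigl(1-c\bigl(\mu_{p_1}(\mathcal S_T\setminus\mathcal F_1)/p_1^t+\mu_{p_1}(\mathcal G_1)+\mu_{p_2}(\mathcal S_T\setminus\mathcal F_2)/p_2^t+\mu_{p_2}(\mathcal G_2)\bigr)\Bigr)
\]
with $c=c(t,p_1,p_2)>0$, and we would prove it by rerunning the inductive or shifting argument behind Theorem~\ref{thm:product-measure}. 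The inequality encodes two facts. Each set of $\mathcal G_1$ (a set not containing $T$) forces a loss in $\mathcal F_2\cap\mathcal S_T$ of measure at least a constant multiple of its own measure, because the parameters satisfy $p_i<1/(t+1)$ strictly, so the slack $1-(t+1)p_i$ is positive. Conversely, missing parts of the star lose measure linearly. Because $p_i<1/(t+1)$ is strict, the analysis should give a gain factor bounded away from $1$. Concretely, when we compare $\mathcal F_i$ with the ``best'' competitor obtained by moving mass off $T$, the ratio of lost to gained measure should be at least $1+\delta$ with $\delta=\delta(t,p_1,p_2)>0$. Combining this with $(1-\varepsilon)^2\geq1-2\varepsilon$ then gives $\mu_{p_i}(\mathcal F_i\triangle\mathcal S_T)\leq C\varepsilon$ directly, which is linear in $\varepsilon$ rather than $\sqrt{\varepsilon}$. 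The square-root loss in Conjecture~\ref{conj:stability} would come from a crude AM--GM step; we avoid it by keeping both defects additive.

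To choose $T$, we would first use a Friedgut-type junta or influence argument, or alternatively the shifting reduction, to find a $t$-set $T$ such that both $\mathcal F_1$ and $\mathcal F_2$ have measure at least $(1-O(\varepsilon))p_i^t$ on $\mathcal S_T$, or at least most of their mass there. For shifted families the natural candidate is $T=[t]$. To pass from shifted to general families we need that shifting can be undone with control. The plan is to use the standard argument that if a shift $S_{ij}$ turns a pair close to some star $\mathcal S_{T'}$ into a pair close to $\mathcal S_{T}$, then the original pair was already close to some star, with a constant loss per step. Since only boundedly many relevant coordinates need to be considered, the accumulated constant depends only on $t,p_1,p_2$. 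We would carry this out via the usual ``two stars cannot both be $\varepsilon$-close'' separation lemma: for distinct $T,T'$ we have $\mu_p(\mathcal S_T\triangle\mathcal S_{T'})\geq 2p^t(1-p)$.

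We expect the main obstacle to be proving the defect inequality with a constant that is uniform in $n$. The extremal configurations near the star (for instance $\mathcal S_T$ with a few sets of $\mathcal A_1(n,t)$-type added) must be shown to lose a \emph{linear} amount. The first thing we would try is a weighted induction on $n$ in the spirit of the proof of Theorem~\ref{thm:product-measure}: split on the last coordinate into $\mathcal F_i(n)$ and $\mathcal F_i(\bar n)$, note that the pairs $(\mathcal F_1(n),\mathcal F_2(\bar n))$ etc.\ are cross $t$- or $(t-1)$-intersecting, and check that the defect terms propagate with a factor bounded by $1$ thanks to the strict inequality $p_i<1/(t+1)$.
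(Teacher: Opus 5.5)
Your proposal has a genuine gap. Its central step, the ``defect version'' of the product bound, is essentially Theorem~\ref{thm:stability} itself. An inequality $\mu_{p_1}(\mathcal F_1)\mu_{p_2}(\mathcal F_2)\leq(p_1p_2)^t(1-cD)$, with $D$ comparable to $\min_T\max_i\mu_{p_i}(\mathcal F_i\mathbin{\triangle}\mathcal S_T)$, is equivalent to $D=O(\varepsilon)$. Neither route you offer for proving it works.

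The argument behind Theorem~\ref{thm:product-measure} cannot be rerun for near-extremal pairs. It starts from an exact maximizer, and its key inequalities $(\alpha+x^+)(\beta-y)\leq\alpha\beta$ and $(\alpha-x)(\beta+y^+)\leq\alpha\beta$ come from product maximality, which a near-extremal pair does not have. Such pairs can also have generators of unbounded size: take $\mathcal S_{[t]}\cup\mathcal S_{[t-1]\cup\{t+1,\ldots,t+k\}}$ paired with the sets containing $[t]$ and meeting $\{t+1,\ldots,t+k\}$, with $k$ large. So the case analysis on $s$ has nothing to act on.

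The coordinate-splitting induction fails already for the exact bound. Write $a_i=\mu^{[n-1]}_{p_i}(\mathcal F_i(n))$ and $b_i=\mu^{[n-1]}_{p_i}(\mathcal F_i(\bar n))$. The inductive bounds $a_1a_2\leq(p_1p_2)^{t-1}$ and $a_1b_2,\,b_1a_2,\,b_1b_2\leq(p_1p_2)^t$ only give $(p_1a_1+(1-p_1)b_1)(p_2a_2+(1-p_2)b_2)\leq(p_1p_2)^t(2-p_1p_2)$. They are even jointly compatible with $a_i=p^{t-1}$, $b_i=p^{t+1}$ when $p_1=p_2=p$, which gives $p^{2t}(1+p(1-p))^2>p^{2t}$. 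So there is no defect that could ``propagate with factor at most $1$''.

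Undoing shifts with a constant loss per step is not dimension-free either. Shifting a family in $2^{[n]}$ can require a number of steps growing with $n$, and your claim that only boundedly many coordinates matter is exactly what would need proof.

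The idea you are missing is to use Theorem~\ref{thm:product-measure} only as a black box, but at \emph{perturbed biases}; this is how the paper exploits the strict inequality $p_i<1/(t+1)$.
\begin{itemize}
\item The paper picks $\theta<1$ close to $1$ with $r_i=p_i^\theta<1/(t+1)$. It uses $\mu_{r_i}(\mathcal F)\geq\mu_{p_i}(\mathcal F)^\theta$ for increasing $\mathcal F$, which follows from the Margulis--Russo formula and the biased edge-isoperimetric inequality.
\item The exact bound at $(r_1,p_2)$, $(p_1,r_2)$ and $(r_1,r_2)$ then forces $\mu_{p_i}(\mathcal F_i)=p_i^t(1+O(\varepsilon))$ and $\mu_{r_i}(\mathcal F_i)\leq(1+O(\varepsilon))\mu_{p_i}(\mathcal F_i)^\theta$.
\item By the mean value theorem, the isoperimetric inequality is $O(\varepsilon)$-tight at some bias $\rho\in(p_i,r_i)$.
\item The Ellis--Keller--Lifshitz stability theorem then gives a subcube $\mathcal S_W$, necessarily with $|W|=t$, at $\mu_\rho$-distance $O(\varepsilon)$. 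Monotonicity of the slices transfers this to bias $p_i$.
\item The two centers are aligned by a coupling argument that uses cross $t$-intersection. The separation $\mu_p(\mathcal S_T\mathbin{\triangle}\mathcal S_{T'})\geq2p^t(1-p)$ you quote concerns a single family and does not do this.
\item The reduction to up-sets is completed by noting that the exact theorem gives $\mu_{p_i}(\mathcal F_i^\uparrow\setminus\mathcal F_i)<2\varepsilon$. Your reduction also needs this step but does not mention it.
\end{itemize}
No shifting or induction is used.
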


Let $C$ be the constant in Theorem~\ref{thm:stability}, enlarged if
necessary so that $C\geq1$. If $0<\varepsilon\leq1$, then
\[
 \mu_{p_i}\bigl(\mathcal F_i\mathbin{\triangle}\mathcal S_T\bigr)
 <C\varepsilon
 \leq C\sqrt{\varepsilon},
 \qquad i=1,2.
\]
 If
$\varepsilon>1$, the same conclusion holds trivially: for any
$T\in\binom{[n]}{t}$,
\[
 \mu_{p_i}\bigl(\mathcal F_i\mathbin{\triangle}\mathcal S_T\bigr)
 \leq1<C\sqrt{\varepsilon},
 \qquad i=1,2.
\]
Thus, Theorem~\ref{thm:stability} implies Conjecture~\ref{conj:stability} and improves
the dependence on $\varepsilon$ from $\sqrt{\varepsilon}$ to
$\varepsilon$.

\subsection{Cross-intersection for integer sequences}

We next consider a related conjecture of Tokushige concerning
cross-intersecting families of integer sequences.
For an integer $m\geq2$, let $[m]^n$ denote the set of all sequences
$\boldsymbol{x}=(x_1,\ldots,x_n)$ with $x_r\in[m]$. A family
$\mathcal H\subseteq[m]^n$ is called \textit{$t$-intersecting} if every two of
its members agree in at least $t$ coordinates. Two families
$\mathcal H_1,\mathcal H_2\subseteq[m]^n$ are called \textit{cross
$t$-intersecting} if, for every
$\boldsymbol{x}=(x_1,\ldots,x_n)\in\mathcal H_1$ and
$\boldsymbol{y}=(y_1,\ldots,y_n)\in\mathcal H_2$,
\[
 \left|\{r\in[n]:x_r=y_r\}\right|\geq t.
\]
Frankl and F\"uredi \cite{FranklFuredi} proved that, when
$m\geq t+1$, every $t$-intersecting family
$\mathcal H\subseteq[m]^n$ satisfies
\[
 |\mathcal H|\leq m^{n-t}.
\]
Ahlswede and Khachatrian \cite{AhlswedeKhachatrianHamming} and Frankl and Tokushige
\cite{FranklTokushigeSequences} independently determined the maximum
size for all parameters. Tokushige
\cite[Conjecture~1.12]{Tokushige} proposed the following cross
extension of the Frankl--F\"uredi bound.

\begin{conjecture}[Tokushige \cite{Tokushige}]\label{conj:integer-sequences}
Let $n,m,t$ be integers with $n\geq t\geq1$ and $m\geq t+1$. If
$\mathcal H_1,\mathcal H_2\subseteq[m]^n$ are cross
$t$-intersecting, then
\[
 |\mathcal H_1||\mathcal H_2|
 \leq m^{2(n-t)}.
\]
\end{conjecture}

Tokushige \cite{Tokushige} proved the conjecture when
$
 m>\bigl(1-2^{-1/t}\bigr)^{-1}.
$
Frankl, Lee, Siggers and Tokushige
\cite{FranklLeeSiggersTokushige} subsequently proved it for  $t\geq14$ and 
$m\geq t+1$. They also observed that an earlier result of Moon \cite{Moon}
already implies the conjectured bound, together with the equality
characterization, for all $t\geq2$ and $m\geq t+2$.
Pach and Tardos \cite{PachTardos} proved the
conjecture for $t=1$ and every $m\geq2$, and at the boundary
$m=t+1$ for every $t\geq4$. 

The following theorem establishes
Conjecture~\ref{conj:integer-sequences} for all admissible parameters
and characterizes the equality cases when $m>t+1$.

\begin{theorem}\label{thm:integer-sequences}
Let $n\geq t\geq1$ and $m\geq t+1$. If
$\mathcal H_1,\mathcal H_2\subseteq[m]^n$ are cross
$t$-intersecting, then
\[
 |\mathcal H_1||\mathcal H_2|
 \leq m^{2(n-t)}.
\]
If $m>t+1$, equality holds if and only if there exist
$T\in\binom{[n]}{t}$ and $a_r\in[m]$ such that
\[
 \mathcal H_1=\mathcal H_2
 =
 \bigl\{
 \boldsymbol{x}\in[m]^n:
 x_r=a_r\text{ for every }r\in T
 \bigr\}.
\]
\end{theorem}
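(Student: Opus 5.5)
The plan is to deduce Theorem~\ref{thm:integer-sequences} from the biased measure result, Theorem~\ref{thm:product-measure}, with $p_1=p_2=1/m\le 1/(t+1)$. Encode each sequence $\boldsymbol{x}\in[m]^n$ by its graph $G(\boldsymbol{x})=\{(r,x_r):r\in[n]\}\subseteq[n]\times[m]$, a set of size $n$ in a ground set of size $N=nm$. Cross $t$-intersection of $\mathcal H_1,\mathcal H_2$ says exactly that $|G(\boldsymbol{x})\cap G(\boldsymbol{y})|\ge t$. The graphs are not a biased family on $2^{[N]}$, so I would pass to upsets. Let $\mathcal F_i$ be the family of all $F\subseteq[n]\times[m]$ that contain $G(\boldsymbol{x})$ for some $\boldsymbol{x}\in\mathcal H_i$. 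Supersets only enlarge intersections, so $\mathcal F_1,\mathcal F_2$ are still cross $t$-intersecting. The catch is that $\mu_{1/m}(\mathcal F_i)$ is not proportional to $|\mathcal H_i|$. A random set meets each row $\{r\}\times[m]$ in a binomial number of points, not exactly one, so this direct route is lossy.

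To avoid the loss I would use a row-wise random selection coupling, the standard device from Frankl--Tokushige style arguments. For a random $X\subseteq[n]\times[m]$ with bias $p=1/m$, define a random partial sequence by choosing, in each row $r$, a uniformly random element of $X\cap(\{r\}\times[m])$ when that set is nonempty. The cleaner version is to map sets to sequences: put $F\in\mathcal F_i'$ if every choice function selecting one point from each nonempty row, with arbitrary values on empty rows, gives a sequence in $\mathcal H_i$. That makes the intersection condition awkward, though. The alternative I would actually pursue is to go back to the structure of the proof of Theorem~\ref{thm:product-measure}. The expected shifting or random-walk argument uses only the ingredients $p\le 1/(t+1)$ and independence across coordinates. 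So I would rerun the same argument on the product space $[m]^n$ with the uniform measure, where each coordinate is ``hit'' (matches a fixed symbol) with probability $1/m\le 1/(t+1)$. Concretely, apply shifting operations $S_{r}^{a\to 1}$, which replace symbol $a$ by $1$ in coordinate $r$ when the result is not already present. These preserve the sizes $|\mathcal H_i|$ and cross $t$-intersection. After shifting, membership depends only on the set $\{r:x_r=1\}$. Shifted families are then determined by upward-closed families $\mathcal G_i\subseteq 2^{[n]}$, where $\mathcal G_i$ consists of the sets $\{r:x_r=1\}$ that force membership. I also need $\mathcal G_1,\mathcal G_2$ to be cross $t$-intersecting. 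This holds because two shifted sequences with $1$-sets $A,B$ can be chosen to disagree everywhere outside $A\cap B$, which is possible since $m\ge 3$; for $m=2$ we have $t=1$, and that case is Pach--Tardos, or can be handled separately. Then $|\mathcal H_i|\le m^n\mu_{1/m}(\mathcal G_i)$, and Theorem~\ref{thm:product-measure} gives $|\mathcal H_1||\mathcal H_2|\le m^{2n}m^{-2t}$.

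The main obstacle is the step that needs care: justifying that fully shifted families are exactly unions of ``$1$-set'' cylinders. One must check that a shifted $\mathcal H_i$ is closed under changing any non-$1$ entry to $1$. One must also check that its size is at most $m^n\mu_{1/m}$ of the family of $1$-patterns $A$ for which some member has $1$-set exactly $A$. The inequality direction is fine, since each sequence has exactly one $1$-set, and the number of sequences with a given $1$-set $A$ is $(m-1)^{n-|A|}$, which matches $m^n\mu_{1/m}$ of the singleton $\{A\}$. For cross $t$-intersection of the $1$-set families, I would use shiftedness to replace all non-$1$ entries in one sequence by a value avoiding the other's entry while staying in the family. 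This may need a second kind of shift that permutes the symbols $2,\dots,m$, so that membership depends only on the $1$-set.

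For equality when $m>t+1$, we have $1/m<1/(t+1)$. Theorem~\ref{thm:product-measure} then forces $\mathcal G_1=\mathcal G_2=\mathcal S_T$. The final step is to unwind the shifts, showing that if a shift outputs a subcube, its input was already a subcube $\{x_r=a_r,\ r\in T\}$ with both families equal.
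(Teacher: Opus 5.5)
Your final route is the paper's: shift symbols toward $1$, pass to the families $\mathcal F_i$ of $1$-sets $\{r:x_r=1\}$ realized by members of the shifted families, bound $|\mathcal H_i'|\le m^n\mu_{1/m}(\mathcal F_i)$ by counting, apply Theorem~\ref{thm:product-measure} with $p_1=p_2=1/m$, and unwind the shifts for equality. The paper imports this reduction from Lemma~5.2 of Frankl, Lee, Siggers and Tokushige. Where you try to justify the reduction yourself, the argument as written does not work. It is false that stability under all shifts $S_r^{a\to1}$ makes membership depend only on the $1$-set. For $n=1$, $m=3$, the family $\{(1),(2)\}$ is stable, yet $(2)$ and $(3)$ both have $1$-set $\emptyset$. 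For the same reason, the family of $1$-sets that ``force membership'' does not bound $|\mathcal H_i|$: here it is $\{\{1\}\}$, and $3\cdot\mu_{1/3}(\{\{1\}\})=1<2$. Relabelling the symbols $2,\dots,m$ cannot create the property either.

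The property is not needed. Stability means exactly that $\mathcal H_i'$ is closed under changing any single entry to $1$, and that suffices. Given $\boldsymbol{x}\in\mathcal H_1'$ and $\boldsymbol{y}\in\mathcal H_2'$ with $1$-sets $A$ and $B$, change $x_r$ to $1$ at every $r$ with $x_r=y_r\neq1$. The new sequence stays in $\mathcal H_1'$ and agrees with $\boldsymbol{y}$ exactly on $A\cap B$, so $|A\cap B|\ge t$. This works for $m=2$ as well, so you need neither $m\ge3$ nor Pach--Tardos; the case $m=2$, $t=1$ goes through Theorem~\ref{thm:product-measure} like every other case.

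The unwinding step for equality is also not a formality. A single shift at a coordinate of $T$ can map a non-subcube onto a subcube: $S_r^{a\to1}$ sends $\{\boldsymbol{x}:x_r=1,\ x_s=1\}\cup\{\boldsymbol{x}:x_r=a,\ x_s\neq1\}$ onto $\{\boldsymbol{x}:x_r=1\}$. So at each step you must use cross $t$-intersection with the partner family. The paper obtains this by applying part~(iv) of the cited lemma successively to each shift, and your sketch needs that argument too.
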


\subsection{A cross extension of the intersection--union theorem}

The classical intersection--union theorem was established
independently in several works; see Daykin and Lov\'asz
\cite{DaykinLovasz} and Seymour \cite{Seymour}. It states that if
$\mathcal H\subseteq2^{[n]}$ satisfies
$
 H\cap H'\neq\emptyset
$
and
$
 H\cup H'\neq[n]
$
for all $H,H'\in\mathcal H$,
then
\[
 |\mathcal H|\leq2^{n-2}.
\]
To express these conditions in terms of integer sequences, associate
with each $H\subseteq[n]$ the sequence
$\boldsymbol{x}(H)=(x_1(H),\ldots,x_n(H))\in[2]^n$ defined by
\[
 x_r(H)=
 \begin{cases}
  1,&r\in H,\\
  2,&r\notin H.
 \end{cases}
\]
Consequently, a family satisfying the
intersection--union condition corresponds to a $2$-intersecting
family in $[2]^n$. The theorem of Frankl and F\"uredi
\cite{FranklFuredi} gives the bound $m^{n-t}$ for a
$t$-intersecting family in $[m]^n$ when $m\geq t+1$. It does not
cover the present case, since here $m=t=2$. To address this case, Frankl and Kupavskii
\cite{FranklKupavskii} introduced a refinement of the usual
intersection condition that prescribes, for each symbol, the number
of coordinates at which it occurs in both sequences.

Let $t_1,\ldots,t_m$ be nonnegative integers. A family
$\mathcal H\subseteq[m]^n$ is called
\textit{$(t_1,\ldots,t_m)$-intersecting} if, for every
$\boldsymbol{x}=(x_1,\ldots,x_n),\boldsymbol{y}=(y_1,\ldots,y_n)\in\mathcal H$ and every
$i\in[m]$,
\[
 \left|\{r\in[n]:x_r=y_r=i\}\right|\geq t_i.
\]
Frankl and Kupavskii \cite{FranklKupavskii} proved that if
$
 n\geq\sum_{i=1}^m t_i
$
and
$
 m\geq t_i+1
$
for every 
$i\in[m]$,
then every $(t_1,\ldots,t_m)$-intersecting family
$\mathcal H\subseteq[m]^n$ satisfies
\[
 |\mathcal H|
 \leq m^{\,n-\sum_{i=1}^m t_i}.
\]
When $m=2$ and $t_1=t_2=1$, this result is precisely the 
intersection--union theorem.

We now introduce the cross version of this notion. Two families
$\mathcal H_1,\mathcal H_2\subseteq[m]^n$ are called \textit{cross
$(t_1,\ldots,t_m)$-intersecting} if, for every
$\boldsymbol{x}=(x_1,\ldots,x_n)\in\mathcal H_1$,
$\boldsymbol{y}=(y_1,\ldots,y_n)\in\mathcal H_2$, and $i\in[m]$,
\[
 \left|\{r\in[n]:x_r=y_r=i\}\right|\geq t_i.
\]

The following result extends the theorem of Frankl and Kupavskii to
this cross setting.

\begin{theorem}\label{thm:vector-sequence}
Let $m\geq2$, and let $n, t_1,\ldots,t_m$ be nonnegative integers
satisfying
$
 n\geq\sum_{i=1}^m t_i
$
and
$
 m\geq t_i+1
$
for every 
$i\in[m]$.
If $\mathcal H_1,\mathcal H_2\subseteq[m]^n$ are cross
$(t_1,\ldots,t_m)$-intersecting, then
\[
 |\mathcal H_1||\mathcal H_2|
 \leq
 m^{\,2\left(n-\sum_{i=1}^m t_i\right)}.
\]
If $m>t_i+1$ for every $i\in[m]$ with $t_i>0$,
then equality holds if and only if there exist pairwise disjoint sets
$T_1,\ldots,T_m\subseteq[n]$ with $|T_i|=t_i$ such that
\[
 \mathcal H_1=\mathcal H_2
 =
 \left\{
 \boldsymbol{x}\in[m]^n:
 x_r=i\text{ for every }i\in[m]\text{ and }r\in T_i
 \right\}.
\]
\end{theorem}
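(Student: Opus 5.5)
Our plan is to deduce Theorem~\ref{thm:vector-sequence} from Theorem~\ref{thm:product-measure}, applied to several families of sets at once, with a uniform measure on sequences. Encode a sequence $\boldsymbol x\in[m]^n$ by the $m$ sets $X^{(i)}=\{r:x_r=i\}$, which partition $[n]$. Under the uniform measure on $[m]^n$, each $X^{(i)}$ is distributed as a $\frac1m$-biased random subset, but the $m$ sets are dependent. The cross condition says that $|X^{(i)}\cap Y^{(i)}|\ge t_i$ for every $i$. Since $m\ge t_i+1$, we have $1/m\le 1/(t_i+1)$, so each colour separately falls in the range of Theorem~\ref{thm:product-measure} with $p_1=p_2=1/m$.

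\textbf{Step 1 (reduction to a product structure via shifting).} First we apply the standard coordinatewise ``pushing'' operations on $[m]^n$, replacing a value by another value simultaneously in both families in the usual Frankl--Füredi manner. These operations preserve $|\mathcal H_1|$, $|\mathcal H_2|$ and the cross $(t_1,\dots,t_m)$-intersecting property. We run them until both families are stable.

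\textbf{Step 2 (one colour at a time).} Next we peel off the colours one by one. Fix colour $1$. For each $\boldsymbol x$, consider the set $A=X^{(1)}$ and the remaining word on $[n]\setminus A$ over the alphabet $\{2,\dots,m\}$. For fixed $A,B$, the fibres $\mathcal H_1(A)$ and $\mathcal H_2(B)$ must be cross $(t_2,\dots,t_m)$-intersecting on the common coordinates outside $A\cup B$, together with any extra agreements. The key inequality to prove is the following: if $\mathcal G_1,\mathcal G_2\subseteq2^{[n]}$ are cross $t_1$-intersecting, then
\[
\sum_{A\in\mathcal G_1}\sum_{B\in\mathcal G_2}(m-1)^{\,n-|A|}(m-1)^{\,n-|B|}\,\rho(A)\rho(B)\le m^{2(n-t_1)}\cdot M^2,
\]
where $\rho$ records the fibre density and $M$ is the inductive bound for the remaining colours. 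The plan is to prove the stronger statement by induction on $\sum t_i$, using a weighted form in which each coordinate carries its own bias. Colour $i$ then appears with probability $1/m$, and conditioning on not being colour $1$ makes colour $i$ appear with probability $1/(m-1)$. This is why we would want the Suda--Tanaka--Tokushige-type version with varying biases, which we would have to establish for general $t$ from the proof of Theorem~\ref{thm:product-measure}.

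\textbf{Step 3 (main obstacle and equality).} The main obstacle is the coupling between colours: the constraints on the $X^{(i)}$ interact, and after conditioning on colour $1$ the remaining biases become $1/(m-1)$, which may exceed $1/(t_i+1)$. To get around this, we would try a direct argument instead of conditioning. Extend Theorem~\ref{thm:product-measure} to a ``multi-colour'' measure on $[m]^n$, namely the $m$-fold product with the cross condition per colour, and prove it by the same method used for Theorem~\ref{thm:product-measure} (shifting plus induction on $n$ via the two restrictions $\mathcal F(\bar n)$, $\mathcal F(n)$). Splitting on the value of the last coordinate gives $m$ subfamilies per side. The recursion then has the shape
\[
f(n,\mathbf t)^2\ \ge\ \sum_{i}\,\tfrac1{m^2}\cdot(\text{products with }t_i\text{ lowered by }1)\,+\,\text{cross terms},
\]
and, by Cauchy--Schwarz and the condition $m\ge t_i+1$, the cross terms are controlled exactly as in the $t$-intersecting biased case. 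For equality, when $m>t_i+1$ all inequalities are strict unless each fibre is either full or empty, which forces the stated common star with disjoint centres $T_i$.
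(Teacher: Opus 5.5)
There is a genuine gap: none of your three steps actually produces the inequality.

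\textbf{Step 1 is false as stated.} A Frankl--F\"uredi push that replaces the value $a$ by $b$ at coordinate $r$ turns an agreement $x_r=y_r=a$ into an agreement in colour $b$. For example, let $\mathcal H_1=\mathcal H_2=\{\boldsymbol x\}$, where $\boldsymbol x$ has exactly $t_a\geq1$ entries equal to $a$. A single push then destroys the colour-$a$ requirement. Shifting preserves ordinary cross $t$-intersection, but not cross $(t_1,\ldots,t_m)$-intersection.

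\textbf{Step 2 is never made precise.} The quantities $\rho$ and $M$ are undefined. As you note yourself, conditioning on colour $1$ moves the other biases to $1/(m-1)$, outside the range of Theorem~\ref{thm:product-measure}, and the varying-bias extension you would need for general $t$ is not available.

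\textbf{Step 3 only asserts the key claim.} You say a multi-colour version of Theorem~\ref{thm:product-measure} follows by splitting on the last coordinate, with Cauchy--Schwarz controlling the cross terms. The splitting constraints alone cannot close, even for one colour, which is the biased problem with $p=1/m$. Put $q=1-p$, $a_k=\mu_p^{[n-1]}(\mathcal F_k(n))$ and $b_k=\mu_p^{[n-1]}(\mathcal F_k(\bar n))$. The inductive bounds are $a_1a_2\leq p^{2(t-1)}$ and $a_1b_2,\,b_1a_2,\,b_1b_2\leq p^{2t}$. They are satisfied by $a_k=p^{t-1}$, $b_k=p^{t+1}$, for which $(pa_1+qb_1)(pa_2+qb_2)=p^{2t}(1+pq)^2>p^{2t}$. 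This is why the paper proves Theorem~\ref{thm:product-measure} with generating sets and Kruskal--Katona rather than by induction on $n$. Your equality claim that ``every fibre is full or empty'' rests on the same unproved recursion.

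\textbf{The missing idea} is to decouple the colours without any conditioning, so that every colour keeps bias $1/m$. For $P\subseteq[m]$, let $\mathcal H^P$ be the $P$-completion of $\mathcal H$, which allows arbitrary changes at coordinates whose entry lies outside $P$. Then $\mathcal H_\ell\subseteq\bigcap_i\mathcal H_\ell^{\{i\}}$. Moreover, $\mathcal H_1^{\{i\}},\mathcal H_2^{\{i\}}$ remain cross $t_i$-intersecting in the ordinary sense, so Theorem~\ref{thm:integer-sequences} with $m\geq t_i+1$ gives $|\mathcal H_1^{\{i\}}||\mathcal H_2^{\{i\}}|\leq m^{2(n-t_i)}$. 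Next use the Frankl--Kupavskii correlation inequality (Lemma~\ref{lem:frankl-kupavskii-correlation}): a $P$-complete and a $Q$-complete family with $P\cap Q=\emptyset$ are negatively correlated under the uniform measure. Apply it successively to the $[r-1]$-complete family $\bigcap_{i<r}\mathcal H_\ell^{\{i\}}$ and to $\mathcal H_\ell^{\{r\}}$. This yields $|\mathcal H_\ell|/m^n\leq\prod_i|\mathcal H_\ell^{\{i\}}|/m^n$, and multiplying over $\ell=1,2$ gives $|\mathcal H_1||\mathcal H_2|\leq m^{2(n-\sum_i t_i)}$.

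\textbf{For the equality case,} every factor must be extremal. Using $m>t_i+1$, the equality part of Theorem~\ref{thm:integer-sequences} makes $\mathcal H_1^{\{i\}}=\mathcal H_2^{\{i\}}$ a subcube fixing the coordinates of some $T_i\in\binom{[n]}{t_i}$, with $T_i=\emptyset$ when $t_i=0$. All fixed symbols must equal $i$. Otherwise, the member of this subcube filled with a symbol $\neq i$ off $T_i$ has fewer than $t_i$ colour-$i$ agreements with itself. The $T_i$ are pairwise disjoint because $\mathcal H_1\neq\emptyset$. Comparing sizes then forces $\mathcal H_1=\mathcal H_2$ to be the family fixing colour $i$ on $T_i$ for every $i$.
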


Taking $m=2$ and $t_1=t_2=1$ recovers the cross
intersection--union theorem of Frankl and Kupavskii
\cite{FranklKupavskiiDownsets}.

\begin{corollary}\label{cor:cross-iu}
Let $n\geq2$, and let
$\mathcal F_1,\mathcal F_2\subseteq2^{[n]}$. Suppose that
$
 F_1\cap F_2\neq\emptyset
$
and
$
 F_1\cup F_2\neq[n]
$
for every $F_1\in\mathcal F_1$ and $F_2\in\mathcal F_2$. Then
\[
 |\mathcal F_1||\mathcal F_2|
 \leq2^{2n-4}.
\]
\end{corollary}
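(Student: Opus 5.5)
The plan is to deduce Corollary~\ref{cor:cross-iu} from Theorem~\ref{thm:vector-sequence} with $m=2$ and $t_1=t_2=1$, using the encoding $H\mapsto\boldsymbol{x}(H)$ from the discussion of the intersection--union theorem. This map is a bijection between $2^{[n]}$ and $[2]^n$. So it suffices to show that the images $\mathcal H_i=\{\boldsymbol{x}(F):F\in\mathcal F_i\}$, $i=1,2$, form a cross $(1,1)$-intersecting pair in $[2]^n$; then $|\mathcal H_i|=|\mathcal F_i|$.

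The translation of the hypotheses runs as follows. Fix $F_1\in\mathcal F_1$ and $F_2\in\mathcal F_2$, and write $\boldsymbol{x}=\boldsymbol{x}(F_1)$, $\boldsymbol{y}=\boldsymbol{x}(F_2)$.
\begin{itemize}
\item The coordinates $r$ with $x_r=y_r=1$ are exactly the elements of $F_1\cap F_2$. Hence $F_1\cap F_2\neq\emptyset$ gives at least $t_1=1$ such coordinate.
\item The coordinates with $x_r=y_r=2$ are exactly the elements of $[n]\setminus(F_1\cup F_2)$. Hence $F_1\cup F_2\neq[n]$ gives at least $t_2=1$ such coordinate.
\end{itemize}
Thus $\mathcal H_1,\mathcal H_2$ are cross $(1,1)$-intersecting.

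It remains to check the numerical hypotheses of Theorem~\ref{thm:vector-sequence}. We have $n\geq2=t_1+t_2$ by assumption, and $m=2\geq t_i+1=2$ for $i=1,2$. The theorem therefore gives
\[
|\mathcal F_1||\mathcal F_2|=|\mathcal H_1||\mathcal H_2|\leq 2^{2(n-2)}=2^{2n-4}.
\]

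There is no real obstacle. The only point needing care is that the case $m=t_i+1$ lies inside the inequality part of Theorem~\ref{thm:vector-sequence}. The strict condition $m>t_i+1$ is needed only for the equality characterization, which the corollary does not claim.
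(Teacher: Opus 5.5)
Your proposal is correct and is essentially the paper's own proof. You encode each set via $\boldsymbol{x}(F)$, note that the nonempty-intersection and non-covering-union conditions become cross $(1,1)$-intersection in $[2]^n$, and apply Theorem~\ref{thm:vector-sequence} with $m=2$ and $t_1=t_2=1$; your remark that the boundary case $m=t_i+1$ falls under the inequality part, with $m>t_i+1$ needed only for the equality characterization, is also accurate.
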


\subsection{Proof strategy and organization}

For $t\geq2$, we prove Theorem~\ref{thm:product-measure} using the
generating set method \cite{AhlswedeKhachatrian,Filmus,ZhangWu}.
Let $(\mathcal F_1,\mathcal F_2)$ be a pair maximizing the product of
the two measures. After making both families increasing and applying
simultaneous shifting, we consider their minimal generators. Let $s$
be the largest coordinate appearing in these generators. If $s=t$,
then both families are the same $t$-star, so it remains to exclude
$s>t$. The generators containing $s$ yield two modified pairs that
remain cross $t$-intersecting. Since the original pair is extremal,
neither modification can increase the product of the two measures.
The resulting inequalities, together with estimates for upper
shadows, rule out $s>t$ for $t\geq3$, apart from one exceptional case
that is treated separately. For $t=2$, we obtain stronger estimates
from the Kruskal--Katona theorem \cite{Katona,Kruskal} and then handle
the remaining cases directly.

To prove Theorem~\ref{thm:stability}, we apply
Theorem~\ref{thm:product-measure} at suitable nearby pairs of biases.
Together with the biased edge isoperimetric stability theorem of
Ellis, Keller and Lifshitz \cite{EllisKellerLifshitz}, the resulting
estimates show that each family is close to a $t$-star. The cross
$t$-intersection condition then implies that the two stars have the
same center.

Finally, for Theorem~\ref{thm:integer-sequences}, we use the
reduction of Frankl, Lee, Siggers and Tokushige
\cite{FranklLeeSiggersTokushige} to obtain cross $t$-intersecting
 subfamilies of $2^{[n]}$. A direct counting argument and
Theorem~\ref{thm:product-measure} then give the desired bound. For
Theorem~\ref{thm:vector-sequence}, the correlation inequality of
Frankl and Kupavskii~\cite{FranklKupavskii} separates the conditions
corresponding to the different symbols, after which
Theorem~\ref{thm:integer-sequences} applies. 

The remainder of the paper is organized as follows. In
Section~\ref{sec:generating-set}, we introduce the generating set
method and establish the required upper shadow estimates. In
Sections~\ref{sec:tge3} and~\ref{sec:t2}, we prove
Theorem~\ref{thm:product-measure} for $t\geq3$ and $t=2$,
respectively. In Section~\ref{sec:stability}, we prove
Theorem~\ref{thm:stability}. Finally, in
Section~\ref{sec:integer-sequences}, we prove the results for integer
sequences.

\section{The generating set method}\label{sec:generating-set}

We first make two reductions concerning
Theorem~\ref{thm:product-measure}. The case $t=1$, including the
equality statement in the open range, follows from
\cite[Theorem~2]{SudaTanakaTokushige} by taking both probability
vectors to be constant.
For $t\geq2$, it suffices to prove the theorem when
\[
 0<p_1,p_2<\frac{1}{t+1}.
\]
Indeed, fix $0<p_1,p_2\leq1/(t+1)$.  For every integer $m\geq2$, put
\[
 p_i^{(m)}
 =
 \left(1-\frac1m\right)p_i,
 \qquad i=1,2.
\]
Once the open range inequality has been established, it gives
\[
 \mu_{p_1^{(m)}}(\mathcal F_1)
 \mu_{p_2^{(m)}}(\mathcal F_2)
 \leq
 \bigl(p_1^{(m)}p_2^{(m)}\bigr)^t.
\]
Since the measure of a fixed family is continuous in its bias,
letting $m\to\infty$ yields
\[
 \mu_{p_1}(\mathcal F_1)\mu_{p_2}(\mathcal F_2)
 \leq(p_1p_2)^t.
\]
Thus, it remains to prove the open range statement for $t\geq2$.

\subsection{Shifting and minimal generators}\label{sec:compression}

A family $\mathcal{F}\subseteq2^{[n]}$ is called  \textit{increasing} if $F\in\mathcal{F}$ and $F\subseteq G$ imply $G\in\mathcal{F}$.  It is
called \textit{decreasing} if $F\in\mathcal F$ and $G\subseteq F$
imply $G\in\mathcal F$.  The \textit{upset} of $\mathcal{F}$ is  defined by
\[
 \mathcal F^\uparrow
 =
 \{G\subseteq[n]:F\subseteq G\text{ for some }F\in\mathcal F\}.
\]
If $\mathcal A,\mathcal B\subseteq2^{[n]}$ are cross
$t$-intersecting, then $\mathcal A^\uparrow$ and
$\mathcal B^\uparrow$ are also cross $t$-intersecting. Moreover,
$\mu_p(\mathcal F^\uparrow)\geq\mu_p(\mathcal F)$ for every
$\mathcal F\subseteq2^{[n]}$.

We use the standard shifting method; see
\cite{FranklShifting,WangShifting2026} for background and recent
developments. 
For $1\leq i<j\leq n$ and $F\in\mathcal F$, define the \textit{$ij$-shift} by
\[
 S_{ij}(F)
 =
 \begin{cases}
 (F\setminus\{j\})\cup\{i\},
 & j\in F,\ i\notin F,\text{ and }
   (F\setminus\{j\})\cup\{i\}\notin\mathcal F,\\
 F,&\text{otherwise},
 \end{cases}
\]
and put
\[
 S_{ij}(\mathcal F)
 =
 \{S_{ij}(F):F\in\mathcal F\}.
\]
A family $\mathcal F$ is called \textit{shifted} if $S_{ij}(\mathcal F)=\mathcal F$ for
every $i<j$.  We shall also use
the elementary fact that $S_{ij}(\mathcal F)$ is increasing whenever
$\mathcal F$ is increasing.

\begin{lemma}[\cite{FranklLeeSiggersTokushige}]\label{lem:shifting-reduction}
Let $\mathcal A,\mathcal B\subseteq2^{[n]}$ be  cross
$t$-intersecting. Then there is a cross $t$-intersecting pair
$(\mathcal A',\mathcal B')$ such that both families are 
shifted, and for every $0<p<1$,
\[
 \mu_p(\mathcal A')=\mu_p(\mathcal A),
 \qquad
 \mu_p(\mathcal B')=\mu_p(\mathcal B).
\]
\end{lemma}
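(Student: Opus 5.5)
The plan is to apply shifting operations $S_{ij}$ simultaneously to both families, and to repeat until both are shifted. Fix $i<j$ and set $\mathcal A_1=S_{ij}(\mathcal A)$ and $\mathcal B_1=S_{ij}(\mathcal B)$. The proof then has three steps.

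\emph{Step 1: measures are preserved.} The shift $S_{ij}$ is injective on $\mathcal A$. A set $F$ is either fixed, or it is replaced by $(F\setminus\{j\})\cup\{i\}$, which lies outside $\mathcal A$. Two distinct sets cannot have the same image: a moved set and a fixed set cannot collide, because the image of a moved set is not in $\mathcal A$. Two moved sets with the same image coincide. Moreover $|S_{ij}(F)|=|F|$. Hence $\mu_p(\mathcal A_1)=\mu_p(\mathcal A)$ for every $p$ simultaneously, and likewise for $\mathcal B$.

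\emph{Step 2: cross $t$-intersection is preserved.} Take $A\in\mathcal A$ and $B\in\mathcal B$; I will show $|S_{ij}(A)\cap S_{ij}(B)|\ge t$.
\begin{itemize}
\item If both sets are fixed, or both are moved, the intersection size does not decrease. When both move, $j$ is replaced by $i$ in each set, so the intersection is unchanged.
\item Suppose $A$ moves to $A'=(A\setminus\{j\})\cup\{i\}$ while $B$ stays fixed. We need $|A'\cap B|\ge t$. This can only fail if $j\in B$ and $i\notin B$, in which case $|A'\cap B|=|A\cap B|-1$.
\item In that case $B$ stayed fixed although $j\in B$ and $i\notin B$. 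So $B'=(B\setminus\{j\})\cup\{i\}\in\mathcal B$.
\item Then $|A\cap B'|\ge t$ by the cross condition. But $A\cap B'=A\cap B\setminus\{j\}$, since $i\notin A$. Hence $|A\cap B|-1\ge t$, which contradicts the assumption that the bound fails.
\item The case where $B$ moves and $A$ stays fixed is symmetric.
\end{itemize}

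\emph{Step 3: termination.} Define the potential $\sum_{F\in\mathcal A}\sum_{x\in F}x+\sum_{F\in\mathcal B}\sum_{x\in F}x$. Any shift that changes one of the families strictly decreases it. Since it is a nonnegative integer, after finitely many shifts $S_{ij}$ fixes both families for all $i<j$. The resulting pair $(\mathcal A',\mathcal B')$ is cross $t$-intersecting, both families are shifted, and all $\mu_p$ are preserved.

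The main obstacle is the mixed case in Step 2, where one set moves and the other does not. There the definition of $S_{ij}$ must be used, namely that the shifted image of the fixed set already lies in its family. Applying the same shift to both families is what makes this argument available.
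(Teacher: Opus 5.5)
Your proof is correct: the injectivity and size-preservation of $S_{ij}$, the mixed-case argument showing that simultaneous shifting preserves cross $t$-intersection, and the element-sum potential for termination are all sound. The paper does not prove this lemma itself and instead cites Frankl, Lee, Siggers and Tokushige, whose proof is this same simultaneous-shifting argument.
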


The following consequence of shifting is contained in
\cite[Lemma~2.3 (iv)]{FranklLeeSiggersTokushige}.

\begin{lemma}\label{lem:inverse-shift-star}
Let $1\leq t\leq n$, let $1\leq i<j\leq n$, and let
$\mathcal F\subseteq2^{[n]}$ be increasing. If
$
 S_{ij}(\mathcal F)=\mathcal S_T
$
for some $T\in\binom{[n]}{t}$, then
$
 \mathcal F=\mathcal S_{T'}
$
for some $T'\in\binom{[n]}{t}$.
\end{lemma}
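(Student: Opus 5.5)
The plan is to use only two elementary properties of the shift $S_{ij}$, together with the fact that $\mathcal F$ is increasing. Shifting never changes the size of a set. It is also injective on $\mathcal F$: if $S_{ij}(F)=S_{ij}(F')$ with $F\neq F'$, then one of them, say $F$, is moved to $(F\setminus\{j\})\cup\{i\}=F'$. But this move is only allowed when $(F\setminus\{j\})\cup\{i\}\notin\mathcal F$, which contradicts $F'\in\mathcal F$. Hence, for every $k$,
\[
 \Bigl|\mathcal F\cap\tbinom{[n]}{k}\Bigr|=\Bigl|S_{ij}(\mathcal F)\cap\tbinom{[n]}{k}\Bigr|=\Bigl|\mathcal S_T\cap\tbinom{[n]}{k}\Bigr|=\binom{n-t}{k-t}.
\]

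Next we read off the small levels. The right-hand side is $0$ for $k<t$ and $1$ for $k=t$. So $\mathcal F$ has no member of size less than $t$, and exactly one member of size $t$; call it $T'\in\binom{[n]}{t}$.

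Since $\mathcal F$ is increasing and contains $T'$, it contains every superset of $T'$, that is, $\mathcal S_{T'}\subseteq\mathcal F$. Comparing level by level, for each $k$ the family $\mathcal S_{T'}$ already has $\binom{n-t}{k-t}$ members of size $k$. This equals the number of members of $\mathcal F$ of size $k$, so the inclusion is an equality on every level. Therefore $\mathcal F=\mathcal S_{T'}$.

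The only point needing care is the injectivity of $S_{ij}$ on $\mathcal F$, which gives the level-wise count preservation. This follows directly from the definition as argued above, so there is no real obstacle.
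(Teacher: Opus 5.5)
Your proof is correct and follows essentially the same route as the paper: count preservation under $S_{ij}$ yields a unique $t$-element member $T'$, increasingness gives $\mathcal S_{T'}\subseteq\mathcal F$, and a cardinality comparison forces equality. The only differences are that the paper compares the total sizes ($|\mathcal F|=2^{n-t}=|\mathcal S_{T'}|$) rather than level by level, and that it takes the injectivity of $S_{ij}$ on $\mathcal F$ for granted, whereas you justify it (the case where both sets are moved, which you pass over, is trivial).
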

\begin{proof}
An $ij$-shift preserves the number of members of each size. Since
$\mathcal S_T$ has exactly one $t$-element member, so does
$\mathcal F$, denote it by $T'$. As $\mathcal F$ is increasing,
$\mathcal S_{T'}\subseteq\mathcal F$. Moreover,
$
 |\mathcal F|
 =
 |S_{ij}(\mathcal F)|
 =
 |\mathcal S_T|
 =
 2^{n-t}
 =
 |\mathcal S_{T'}|.
$
Thus, $\mathcal F=\mathcal S_{T'}$.
\end{proof}

Let $\mathcal F\subseteq2^{[n]}$ be increasing. A
\textit{generating set} of $\mathcal F$ is an inclusion-minimal member
of $\mathcal F$. The collection of all generating sets of
$\mathcal F$ is called its \textit{generating family} and is denoted
by $G(\mathcal F)$. Since $\mathcal F$ is increasing,
\[
 \mathcal F
 =
 \bigcup_{E\in G(\mathcal F)}
 \{H\subseteq[n]:E\subseteq H\}.
\]
For $1\leq r\leq n$ and $H\subseteq[r]$, define
\[
 \mathcal D_r(H)
 =
 \{F\subseteq[n]:F\cap[r]=H\},
\]
and, for $\mathcal H\subseteq2^{[r]}$, define
\[
 \mathcal D_r(\mathcal H)
 =
 \bigcup_{H\in\mathcal H}\mathcal D_r(H).
\]

Let $n\geq t\geq1$, and let $\mathcal A,\mathcal B\subseteq2^{[n]}$ be nonempty increasing
families that are cross $t$-intersecting. Set
\[
 s
 =
 \max\{\max E:E\in G(\mathcal A)\cup G(\mathcal B)\},
\]
where $\max E$ denotes the largest element of $E$. A generating set
containing $s$ is called a \textit{boundary generating set}. For
$t\leq r\leq s$, define
\[
\mathcal X_r
 =
 \{E\in G(\mathcal A):s\in E,\ |E|=r\},
 \qquad
\mathcal Y_r
 =
 \{F\in G(\mathcal B):s\in F,\ |F|=r\}.
\]
and
\[
\mathcal X_r^-=\{E\setminus\{s\}:E\in \mathcal X_r\},
 \qquad
\mathcal Y_r^-=\{F\setminus\{s\}:F\in \mathcal Y_r\}.
\]
A cross $t$-intersecting pair $(\mathcal A,\mathcal B)$ is called
\textit{mutually inclusion-maximal} if neither family can be enlarged,
with the other family fixed, while preserving cross $t$-intersection.
Every mutually inclusion-maximal cross $t$-intersecting pair consists
of two increasing families.

\begin{lemma}\label{lem:boundary}
Let $n\geq t\geq1$, and let
$\mathcal A,\mathcal B\subseteq2^{[n]}$ be nonempty shifted families
forming a mutually inclusion-maximal cross $t$-intersecting pair.
If $\mathcal X_i\neq\emptyset$ and $j=s+t-i$, then $\mathcal  Y_j\neq\emptyset$.
More precisely, for every $E\in \mathcal X_i$, there exists $F\in \mathcal  Y_j$ such
that
\[
 |E\cap F|=t
 \qquad\text{and}\qquad
 E\cup F=[s].
\]
Moreover, both pairs
\begin{equation}\label{eq:boundary-modifications}
 \left(
  \mathcal A\cup\mathcal D_s(\mathcal X_i^-),
  \mathcal B\setminus\mathcal D_s(\mathcal Y_j)
 \right)
 \quad\text{and}\quad
 \left(
  \mathcal A\setminus\mathcal D_s(\mathcal X_i),
  \mathcal B\cup\mathcal D_s(\mathcal Y_j^-)
 \right)
\end{equation}
are cross $t$-intersecting. In addition,
\[
 \mathcal D_s(\mathcal X_i^-)\cap\mathcal A=\emptyset,\qquad
 \mathcal D_s(\mathcal X_i)\subseteq\mathcal A,\qquad
 \mathcal D_s(\mathcal Y_j^-)\cap\mathcal B=\emptyset,\qquad
 \mathcal D_s(\mathcal Y_j)\subseteq\mathcal B.
\]
\end{lemma}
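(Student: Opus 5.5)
The proof is the standard generating-set argument in the style of Ahlswede--Khachatrian, Filmus and Zhang--Wu. It has four steps, carried out in order.

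\textbf{Step 1: facts about boundary sets.} First I record basic facts about the boundary generating sets. Fix $E\in\mathcal X_i$. Since $E$ is a minimal member of $\mathcal A$, the set $E\setminus\{s\}$ is not in $\mathcal A$. The pair is mutually inclusion-maximal, so the only reason $E\setminus\{s\}$ could be excluded is that adding it would break cross $t$-intersection. Hence there is $B\in\mathcal B$ with $|(E\setminus\{s\})\cap B|<t$. Because $|E\cap B|\geq t$, this forces $s\in B$ and $|E\cap B|=t$.

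\textbf{Step 2: shrinking $B$ to a boundary generator $F$ with $E\cup F=[s]$.}
\begin{itemize}
\item Since $\mathcal B$ is increasing, I may enlarge $B$ outside $E$ without changing $E\cap B$. Take $B'=B\cup([n]\setminus E)$; then $B'\in\mathcal B$ and $E\cap B'=E\cap B$.
\item Now pick a generator $F\in G(\mathcal B)$ with $F\subseteq B'$. Then $|E\cap F|\ge t$ forces $|E\cap F|=t$ and $E\cap F=E\cap B$.
\item To get $s\in F$ and $E\cup F=[s]$, I use shiftedness. Suppose some $x\in[s]\setminus(E\cup F)$ exists, or suppose $s\notin F$. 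In the second case $s\in E\cap B$ but $s\notin F$, so $|E\cap F|\le t-1$, a contradiction; hence $s\in F$.
\item For $x<s$ with $x\notin E\cup F$: the set $F'=(F\setminus\{s\})\cup\{x\}$ lies in $\mathcal B$, because $\mathcal B$ is shifted and increasing (apply $S_{xs}$). But $|E\cap F'|=t-1$ since $s\in E$ and $x\notin E$. This contradicts cross $t$-intersection. Hence $[s]\subseteq E\cup F$.
\item Generators lie in $[s]$ by the definition of $s$, so $E\cup F=[s]$.
\end{itemize}

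\textbf{Step 3: the size of $F$.} We get $|F|=s-|E|+t=s+t-i=j$. So $F\in\mathcal Y_j$, which proves the first two claims.

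\textbf{Step 4: the modified pairs and the inclusion facts.}
\begin{itemize}
\item \emph{Inclusions.} We have $\mathcal D_s(\mathcal X_i)\subseteq\mathcal A$ because $\mathcal A$ is increasing. For $\mathcal D_s(\mathcal X_i^-)\cap\mathcal A=\emptyset$: a set $G$ with $G\cap[s]=E\setminus\{s\}$ would contain some generator $E'\subseteq[s]$ with $E'\subseteq E\setminus\{s\}$, contradicting minimality of $E$.
\item \emph{First modified pair.} Take $G\in\mathcal D_s(\mathcal X_i^-)$ and $H\in\mathcal B\setminus\mathcal D_s(\mathcal Y_j)$, and suppose $|G\cap H|<t$. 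Put $E=(G\cap[s])\cup\{s\}\in\mathcal X_i$. Since $E\cup(G\setminus[s])\in\mathcal A$, we get $|G\cap H|\ge t-1$, with $s\in H$ and equality. Then I run Step 2 on $H$ itself: the set $H\cap[s]$ contains a generator $F$ of $\mathcal B$. Otherwise the generators inside $H$ are not all contained in $[s]$, which is impossible, so $H\cap[s]\in\mathcal B$ up to increasing.
\item The shifting argument of Step 2 then shows $[s]\setminus E\subseteq H$, with $|E\cap H\cap[s]|=t$. So $H\cap[s]$ equals exactly $([s]\setminus E)\cup(E\cap H)$. The generator $F\subseteq H\cap[s]$ must still meet $E$ in $t$ elements including $s$, and by the same shift argument it must cover $[s]\setminus E$. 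Hence $F=H\cap[s]$, $F\in\mathcal Y_j$, and $H\in\mathcal D_s(\mathcal Y_j)$, a contradiction.
\item \emph{Second modified pair.} This is symmetric, with the roles of $\mathcal A$ and $\mathcal B$ exchanged.
\item \emph{Inclusions for $\mathcal B$.} These are proved exactly as for $\mathcal A$.
\end{itemize}

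The main obstacle is this last step: showing that a set $H$ which fails against the shrunken $G$ is forced to satisfy $H\cap[s]\in\mathcal Y_j$ exactly. This requires combining minimality, the tail-independence coming from $\mathcal D_s$, and shiftedness carefully. I would handle it by repeating the exchange argument of Step 2 with $H$ in place of $B$.
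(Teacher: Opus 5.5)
Your proposal is correct and follows essentially the same route as the paper. You take a witness $B\in\mathcal B$ from mutual inclusion-maximality, pick a generator $F\subseteq B$ with $E\cap F=E\cap B$, and use one shift to force $E\cup F=[s]$ (you shift $F$, the paper shifts $E$; either works). You then rerun this argument on a failing $H$ to get $H\cap[s]=F\in\mathcal Y_j$; the enlargement $B'=B\cup([n]\setminus E)$ in Step 2 is harmless but unnecessary.
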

\begin{proof}
By the definition of $s$, every generating set of either family is
contained in $[s]$. We first note that if
$E_0\in G(\mathcal A)$ and $F_0\in G(\mathcal B)$ satisfy
$
 s\in E_0\cap F_0
$
and
$
 |E_0\cap F_0|=t,
$
then
\[
 E_0\cup F_0=[s].
\]
Indeed, if some $x\in[s-1]\setminus(E_0\cup F_0)$ existed, then
shiftedness would imply
$
 (E_0\setminus\{s\})\cup\{x\}\in\mathcal A.
$
However,
$
 \left|
 \bigl((E_0\setminus\{s\})\cup\{x\}\bigr)\cap F_0
 \right|
 =t-1,
$
contradicting cross $t$-intersection. Consequently,
\[
 E_0\cup F_0=[s]
 \qquad\text{and}\qquad
 |E_0|+|F_0|=s+t.
\]

Fix $E\in \mathcal X_i$. Since $E$ is a generating set,
$E\setminus\{s\}\notin\mathcal A$. By mutual
inclusion-maximality, there is $B_0\in\mathcal B$ such that
$
 |(E\setminus\{s\})\cap B_0|\leq t-1.
$
Since $E\in\mathcal A$ and the two families are cross
$t$-intersecting,
\[
 t
 \leq |E\cap B_0|
 \leq |(E\setminus\{s\})\cap B_0|+1
 \leq t.
\]
It follows that
\[
 s\in B_0,\qquad
 |(E\setminus\{s\})\cap B_0|=t-1,
 \qquad
 |E\cap B_0|=t.
\]
Choose $F\in G(\mathcal B)$ with $F\subseteq B_0$. Then
$
 t\leq |E\cap F|\leq |E\cap B_0|=t.
$
Hence, $E\cap F=E\cap B_0$, and in particular $s\in F$. The preceding
observation gives
\[
 E\cup F=[s]
 \qquad\text{and}\qquad
 |F|=s+t-|E|=s+t-i=j.
\]
Therefore, $F\in\mathcal Y_j$.

We next consider the first pair in
\eqref{eq:boundary-modifications}. Let $E\in\mathcal X_i$ and
$A'\in\mathcal D_s(E\setminus\{s\})$. Suppose that
$B'\in\mathcal B$ satisfies $|A'\cap B'|<t$. Since
$E\in\mathcal A$,
$
 t
 \leq |E\cap B'|
 \leq |A'\cap B'|+1
 \leq t.
$
Therefore,
\[
 s\in B',\qquad
 |A'\cap B'|=t-1,
 \qquad
 |E\cap B'|=t.
\]
Choose $F'\in G(\mathcal B)$ with $F'\subseteq B'$. Then
$
 t\leq |E\cap F'|\leq |E\cap B'|=t.
$
Thus, $E\cap F'=E\cap B'$ and $s\in F'$. By the preceding observation,
\[
 E\cup F'=[s]
 \qquad\text{and}\qquad
 |F'|=s+t-i=j.
\]
Hence, $F'\in\mathcal Y_j$.

We claim that $B'\cap[s]=F'$. Otherwise, take
$x\in(B'\cap[s])\setminus F'$. Since $E\cup F'=[s]$, we have
$x\in E$, contradicting $E\cap F'=E\cap B'$. Thus,
$B'\in\mathcal D_s(\mathcal Y_j)$. It follows that every member of
$\mathcal B$ that fails to cross $t$-intersect a set in
$\mathcal D_s(\mathcal X_i^-)$ is removed from the first pair in
\eqref{eq:boundary-modifications}. Hence, this pair is  cross
$t$-intersecting. The second pair is cross $t$-intersecting by
symmetry.

It remains to prove the four set relations. Fix $E\in \mathcal X_i$. If some
$A'\in\mathcal D_s(E\setminus\{s\})$ belonged to $\mathcal A$, then
$A'$ would contain a generating set $K\in G(\mathcal A)$. Since
$K\subseteq[s]$,
$
 K\subseteq A'\cap[s]=E\setminus\{s\},
$
contradicting the minimality of $E$. Hence,
$
 \mathcal D_s(\mathcal X_i^-)\cap\mathcal A=\emptyset.
$
If $A'\in\mathcal D_s(E)$, then $E\subseteq A'$, so increasingness
gives $A'\in\mathcal A$. Therefore,
$
 \mathcal D_s(\mathcal X_i)\subseteq\mathcal A.
$
The corresponding relations for $\mathcal Y_j^-$ and $\mathcal Y_j$ follow
symmetrically.
\end{proof}

\subsection{Boundary inequalities}\label{sec:boundary-tools}

Fix $n\geq t\geq 1$ and $0<p_1,p_2\leq1/(t+1)$, and let $\mathcal A,\mathcal B\subseteq2^{[n]}$ be  cross
$t$-intersecting families maximizing
\[
 \mu_{p_1}(\mathcal A)\mu_{p_2}(\mathcal B).
\]
A common $t$-star has positive product, so both families are nonempty.
Product maximality implies that the pair is mutually
inclusion-maximal and hence increasing. By
Lemma \ref{lem:shifting-reduction},
we may assume that both families are shifted. The resulting pair
remains product-maximizing and  mutually inclusion-maximal and increasing.

Let $s$, $\mathcal X_r$, $\mathcal Y_r$, $\mathcal X_r^-$, and
$\mathcal Y_r^-$ be as defined in the preceding subsection. At least
one boundary layer is nonempty. After interchanging
$(\mathcal A,p_1)$ and $(\mathcal B,p_2)$ if necessary, choose $i$
such that $\mathcal X_i\neq\emptyset$ and put
$
 j=s+t-i.
$
Lemma~\ref{lem:boundary} gives $\mathcal Y_j\neq\emptyset$. Write
\[
 q_1=1-p_1,\qquad q_2=1-p_2,\qquad
 \alpha=\mu_{p_1}(\mathcal A),\qquad
 \beta=\mu_{p_2}(\mathcal B),
\]
and define
\begin{align*}
 x
 &=\mu_{p_1}\bigl(\mathcal D_s(\mathcal X_i)\bigr)
   =|\mathcal X_i|p_1^iq_1^{s-i},
&
 x^+
 &=\mu_{p_1}\bigl(\mathcal D_s(\mathcal X_i^-)\bigr)
   =\frac{q_1}{p_1}x,\\
 y
 &=\mu_{p_2}\bigl(\mathcal D_s(\mathcal Y_j)\bigr)
   =|\mathcal Y_j|p_2^jq_2^{s-j},
&
 y^+
 &=\mu_{p_2}\bigl(\mathcal D_s(\mathcal Y_j^-)\bigr)
   =\frac{q_2}{p_2}y.
\end{align*}
By Lemma~\ref{lem:boundary} and the product maximality,
\[
 (\alpha+x^+)(\beta-y)\leq\alpha\beta,
 \qquad
 (\alpha-x)(\beta+y^+)\leq\alpha\beta.
\]
Equivalently,
\[
 \beta x^+\leq y(\alpha+x^+),
 \qquad
 \alpha y^+\leq x(\beta+y^+).
\]
Since $x,y>0$, multiplying these inequalities yields
\begin{equation}\label{eq:boundary-upper}
 \frac{\alpha}{\alpha+x^+}
 \frac{\beta}{\beta+y^+}
 \leq
 \frac{p_1p_2}{q_1q_2}.
\end{equation}

We next obtain lower bounds for the two factors on the left. For
$\mathcal H\subseteq\binom{[s-1]}{r-1}$, define its \textit{upper shadow} by
\[
 \nabla\mathcal H
 =
 \left\{
  K\in\binom{[s-1]}r:
  H\subseteq K\text{ for some }H\in\mathcal H
 \right\}.
\]
Double counting the pairs $(H,K)$ with
$H\in\mathcal X_i^-$, $K\in\nabla\mathcal X_i^-$, and $H\subseteq K$
gives
\[
 (s-i)|\mathcal X_i^-|
 \leq i|\nabla\mathcal X_i^-|.
\]
Since $|\mathcal X_i^-|=|\mathcal X_i|$, it follows that
\[
 |\nabla\mathcal X_i^-|
 \geq\frac{s-i}{i}|\mathcal X_i|.
\]

For every $K\in\nabla\mathcal X_i^-$, there exist
$E\in\mathcal X_i$ and $\ell\in[s-1]\setminus E$ such that
$
 K=(E\setminus\{s\})\cup\{\ell\}.
$
Since $\ell<s$, shiftedness gives $K\in\mathcal A$. As
$\mathcal A$ is increasing,
\[
 \mathcal D_{s-1}\bigl(\nabla\mathcal X_i^-\bigr)
 \subseteq\mathcal A.
\]
For distinct $K,K'\in\nabla\mathcal X_i^-$, we have
$
 \mathcal D_{s-1}(K)\cap\mathcal D_{s-1}(K')=\emptyset.
$
Moreover,
\[
 \mathcal D_{s-1}\bigl(\nabla\mathcal X_i^-\bigr)
 \cap\mathcal D_s(\mathcal X_i)=\emptyset,
\]
since every set in the first family contains exactly $i$ elements of
$[s-1]$, whereas every set in the second contains exactly $i-1$
elements of $[s-1]$. Consequently,
\[
 \alpha
 \geq
 x+|\nabla\mathcal X_i^-|p_1^iq_1^{s-i-1}\geq
 x\left(1+\frac{s-i}{iq_1}\right).
\]
By symmetry,
\[
 \beta\geq
 y\left(1+\frac{s-j}{jq_2}\right).
\]

For $1\leq r\leq s$ and $0<p<1$, put
\[
 R_{s,r}(p)
 =
 \frac{p(s-rp)}{r(1-p)+(s-r)p}.
\]
Since $a\mapsto a/(a+c)$ is increasing for every $c>0$, and
$
 x^+=\frac{q_1}{p_1}x,
 y^+=\frac{q_2}{p_2}y,
$
the preceding estimates give
\begin{equation}\label{eq:boundary-lower}
\begin{aligned}
 \frac{\alpha}{\alpha+x^+}
 &\geq
 \frac{1+\dfrac{s-i}{iq_1}}
      {1+\dfrac{s-i}{iq_1}+\dfrac{q_1}{p_1}}
 =R_{s,i}(p_1),\\
 \frac{\beta}{\beta+y^+}
 &\geq
 \frac{1+\dfrac{s-j}{jq_2}}
      {1+\dfrac{s-j}{jq_2}+\dfrac{q_2}{p_2}}
 =R_{s,j}(p_2).
\end{aligned}
\end{equation}
Define
\begin{equation}\label{eq:boundary-H}
 H_{s,r}(p)
 =
 \frac{1-p}{p}R_{s,r}(p)
 =
 \frac{(1-p)(s-rp)}
      {r(1-p)+(s-r)p}.
\end{equation}
Combining \eqref{eq:boundary-upper} and
\eqref{eq:boundary-lower} shows that a product-maximizing pair with
boundary sizes $i$ and $j$ must satisfy
\[
 H_{s,i}(p_1)H_{s,j}(p_2)\leq1.
\]
Thus, such a boundary type is excluded whenever
\begin{equation}\label{eq:boundary-criterion}
 H_{s,i}(p_1)H_{s,j}(p_2)>1,
 \qquad i+j=s+t.
\end{equation}

The following scalar estimate verifies this criterion except for one
parameter pair.

\begin{lemma}\label{lem:algebra}
Let $t\geq3$, $s\geq t+3$, and
$t+1\leq i,j\leq s$ satisfy $i+j=s+t$. If
$(t,s)\neq(3,6)$, then
\[
 H_{s,i}(p_1)H_{s,j}(p_2)>1
\]
for all $0<p_1,p_2\leq1/(t+1)$.
\end{lemma}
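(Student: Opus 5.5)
The plan is to reduce to the endpoint $p_1=p_2=1/(t+1)$ by monotonicity in $p$, and then to verify an explicit rational inequality in the integer variables $t,s,i$.

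\emph{Step 1: monotonicity in $p$.} For $1\le r\le s$, each $H_{s,r}(p)$ should be strictly decreasing on $(0,1)$. Write the denominator as $r+(s-2r)p$. Logarithmic differentiation gives
\[
 \frac{d}{dp}\log H_{s,r}(p)=-\frac{1}{1-p}-\frac{r}{s-rp}+\frac{2r-s}{r-(2r-s)p}.
\]
If $2r\le s$, the last term is nonpositive and we are done. If $2r>s$, put $c=2r-s\le r$. The map $c\mapsto c/(r-cp)$ is increasing, so $c/(r-cp)\le r/(r-rp)=1/(1-p)$. Hence the derivative is at most $-r/(s-rp)<0$.

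Since $H_{s,i}$ and $H_{s,j}$ are positive, their product is then minimized at $p_1=p_2=1/(t+1)$. It therefore suffices to prove $h(i)h(j)>1$, where
\[
 h(r)=H_{s,r}\!\left(\tfrac1{t+1}\right)=\frac{t\bigl((t+1)s-r\bigr)}{(t+1)\bigl(s+(t-1)r\bigr)} .
\]

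\emph{Step 2: reduction to a one-variable problem in $i$.} Set $j=s+t-i$ with $t+1\le i\le s-1$; this range is forced since $j\ge t+1$. Define $g(i)=\log h(i)+\log h(s+t-i)$. The expression is symmetric under $i\leftrightarrow j$, so I would treat $i$ as a real variable and study $\log h$. It is a difference of logs of linear functions, $\log\bigl((t+1)s-r\bigr)-\log\bigl(s+(t-1)r\bigr)$ plus a constant. I would compute $(\log h)''$ and try to show a definite sign on the relevant interval.
\begin{itemize}
\item If $\log h$ is concave there, $g$ is concave, and its minimum over the interval is at the endpoints $i=t+1$, $j=s-1$ (or symmetrically).
\item If $\log h$ is convex there, the minimum is at the midpoint $i=j=(s+t)/2$.
\end{itemize}
In practice I expect the sign to change, since the two linear pieces behave differently. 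In that case I would clear denominators and treat $h(i)h(s+t-i)-1$ directly. Its numerator is a quadratic polynomial in $i$ that is symmetric about $(s+t)/2$. It is positive on $[t+1,s-1]$ if and only if it is positive at the endpoints and the parabola opens in the right direction, or else its vertex value is positive. This reduces everything to checking one or two explicit polynomial inequalities in $(t,s)$.

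\emph{Step 3: the explicit polynomial inequalities.} This is the main obstacle. At the endpoint $i=t+1$, $j=s-1$, the inequality reads
\[
 t^2\bigl((t+1)s-t-1\bigr)\bigl((t+1)s-s+1\bigr)>(t+1)^2\bigl(s+(t-1)(t+1)\bigr)\bigl(s+(t-1)(s-1)\bigr).
\]
I would expand both sides as polynomials in $s$ with coefficients in $t$. Then I would substitute $s=t+3+u$ with $u\ge0$ and show that the difference has nonnegative coefficients in $u$ once $t\ge4$, and also for $t=3$ once $u\ge1$. The midpoint case (if needed) would be handled the same way.

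The hypothesis $s\ge t+3$ should be exactly what makes the leading behaviour dominate. The exceptional pair $(t,s)=(3,6)$, where $i=4$, $j=5$, is where I expect the difference to be $\le0$ or the margin to vanish, which is why it is excluded. So I would compute that single case by hand to confirm that the excluded pair is precisely the one where the coefficient check fails. The remaining small cases, such as $t=3$ with $s=7$, can be verified numerically if the general coefficient argument needs $u$ large.
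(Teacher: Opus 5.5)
Your Step 1 is correct: the log-derivative argument is a clean alternative to the paper's sign analysis of $N_{s,r}$. Your reduction to $p_1=p_2=1/(t+1)$ is also what the paper does.

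The gap is in Steps 2--3. You never decide where the minimum over $i$ lies, and your guess that $(\log h)''$ changes sign is wrong. Worse, the one inequality you actually commit to checking is the endpoint $i=t+1$, $j=s-1$, which is the weakest case and does not imply the lemma. In fact
\[
 (\log h)''(r)=\frac{(t-1)^2}{\bigl(s+(t-1)r\bigr)^2}-\frac{1}{\bigl((t+1)s-r\bigr)^2}>0
 \qquad(r\le s),
\]
since this is equivalent to $2(t-1)r<(t^2-2)s$, and $t^2-2>2(t-1)$ for $t\ge3$. So $g(i)=\log h(i)+\log h(s+t-i)$ is convex and symmetric about $(s+t)/2$. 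It is smallest at the midpoint and largest at the endpoints.

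Equivalently, after clearing denominators and using $i+j=s+t$, the numerator is
\[
 \Delta=t(t+1)s\bigl((t^2-t-1)s-2t^2+1\bigr)-(t^4-3t^2+1)\,ij .
\]
This is an upward-opening parabola in $i$ and is decreasing in $ij$. For example, at $(t,s)=(3,7)$ the endpoint $(i,j)=(4,6)$ gives $\Delta=192$, but the midpoint $(5,5)$ gives only $\Delta=137$. Positivity at the endpoints therefore says nothing about the interior, and the midpoint case is the whole content, not an ``if needed'' afterthought.

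What you need is the vertex estimate. The paper gets it from $ij\le(s+t)^2/4$ and the factorization
\[
 \Delta\ge\tfrac14\bigl((t^2-t-1)s-t(t^2+t-1)\bigr)\bigl((3t^2+3t+1)s+t(t^2-t-1)\bigr).
\]
So everything reduces to $(t^2-t-1)s>t(t^2+t-1)$. With $s\ge t+3$ this becomes $t^2-3t-3>0$ for $t\ge4$, and $5s>33$, i.e.\ $s\ge7$, for $t=3$. Your substitution $s=t+3+u$ works once you apply it to this midpoint bound rather than to the endpoint.

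Your diagnosis of the excluded pair is right. At $(t,s)=(3,6)$ the only type is $(4,5)$, which is both the endpoint and the nearest integer point to the midpoint, and $h(4)h(5)=855/896<1$.
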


\begin{proof}
For $r\in\{i,j\}$, differentiation gives
\[
 H_{s,r}'(p)
 =
 \frac{N_{s,r}(p)}
 {\bigl(r(1-p)+(s-r)p\bigr)^2},
\]
where
\[
 N_{s,r}(p)
 =
 -(s^2-sr+r^2)+2r^2p+r(s-2r)p^2.
\]
The other boundary size is at least $t+1$, so $r<s$. Moreover,
\[
 N_{s,r}'(p)
 =
 2r\bigl((1-p)r+p(s-r)\bigr)>0
 \qquad (0\leq p\leq1),
\]
while
\[
 N_{s,r}(1)=-(s-r)^2<0.
\]
Hence, $N_{s,r}(p)<0$ on $[0,1]$, and $H_{s,r}$ is strictly decreasing
on $(0,1)$.

Put $b=1/(t+1)$. It is sufficient to prove
\[
 H_{s,i}(b)H_{s,j}(b)>1.
\]
At $p=b$,
\[
 H_{s,r}(b)
 =
 \frac{t((t+1)s-r)}
 {(t+1)(s+(t-1)r)}.
\]
After clearing the positive denominators, the desired inequality is
equivalent to $\Delta>0$, where
\[
 \Delta
 =
 t^2((t+1)s-i)((t+1)s-j)
 -(t+1)^2(s+(t-1)i)(s+(t-1)j).
\]
Using $i+j=s+t$, we can write
\[
 \Delta
 =
 t(t+1)s\bigl((t^2-t-1)s-2t^2+1\bigr)
 -(t^4-3t^2+1)ij.
\]
Since $t^4-3t^2+1>0$ and
\[
 ij\leq\frac{(s+t)^2}{4},
\]
we obtain
\[
\begin{aligned}
\Delta
&\geq
t(t+1)s\bigl((t^2-t-1)s-2t^2+1\bigr)
-\frac{t^4-3t^2+1}{4}(s+t)^2\\
&=
\frac{1}{4}
\bigl((t^2-t-1)s-t(t^2+t-1)\bigr)
\bigl((3t^2+3t+1)s+t(t^2-t-1)\bigr).
\end{aligned}
\]
The second factor is positive, so it remains to prove that
\[
 (t^2-t-1)s>t(t^2+t-1).
\]
If $t\geq4$, then $s\geq t+3$ implies 
\begin{align*}
 (t^2-t-1)s-t(t^2+t-1)
 &\geq
 (t^2-t-1)(t+3)-t(t^2+t-1)\\
 &=t^2-3t-3>0.
\end{align*}
If $t=3$, then $s\geq t+3$ and $(t,s)\neq(3,6)$ imply
$s\geq7$. Hence,
\[
 (t^2-t-1)s-t(t^2+t-1)
 =5s-33>0.
\]
Therefore, $\Delta>0$ and the result follows.
\end{proof}

Thus, within the range $t\geq3$, $s\geq t+3$, and
$t+1\leq i,j\leq s$, the only parameter pair not covered by
Lemma~\ref{lem:algebra} is $(t,s)=(3,6)$.

\section{The exact theorem for $t\geq3$}
\label{sec:tge3}

\subsection{The cases $i=t$ and $s=t+2$}

We first treat two boundary cases. Both lemmas below are stated for
$t\geq1$, since they will also be used in Section~\ref{sec:t2}.

\begin{lemma}\label{lem:size-t}
Let $n\geq t\geq1$ and $0<p_1,p_2<1/(t+1)$. Let
$\mathcal A,\mathcal B\subseteq2^{[n]}$ be nonempty shifted families
forming a mutually inclusion-maximal cross $t$-intersecting pair.
With the notation of Subsection~\ref{sec:compression}, suppose that
$s>t$ and
$\mathcal X_t\cup\mathcal Y_t\neq\emptyset$. Then
\[
 \mu_{p_1}(\mathcal A)\mu_{p_2}(\mathcal B)<(p_1p_2)^t.
\]
\end{lemma}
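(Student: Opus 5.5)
The plan is to show that a boundary generating set of size exactly $t$ forces the other family into the star of $[s]$. A crude union bound then finishes. By the symmetry between $(\mathcal A,p_1)$ and $(\mathcal B,p_2)$, I will assume $\mathcal X_t\neq\emptyset$ and fix $E\in\mathcal X_t$. Thus $|E|=t$, $s\in E$, and since $s>t$ we have $E\neq[t]$.

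\textbf{Step 1: $[s]\in\mathcal B$.} Applying Lemma~\ref{lem:boundary} with $i=t$, $j=s$ gives $F\in\mathcal Y_s$ with $E\cup F=[s]$ and $|F|=s$. Hence $F=[s]$. Since $F\in G(\mathcal B)\subseteq\mathcal B$, every $A\in\mathcal A$ satisfies $|A\cap[s]|\geq t$.

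\textbf{Step 2: $\mathcal B\subseteq\mathcal S_{[s]}$.} Every $B\in\mathcal B$ must meet the $t$-set $E\in\mathcal A$ in at least $t$ elements, so $E\subseteq B$. Now take any $x\in[s-1]\setminus E$; this set is nonempty because $|E|=t<s$. By shiftedness, $E_x:=(E\setminus\{s\})\cup\{x\}\in\mathcal A$, and $E_x$ is again a $t$-set. So every $B\in\mathcal B$ also contains each $E_x$. Taking the union over all such $x$ gives $[s]\subseteq B$. Hence $\beta\leq p_2^{s}$.

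\textbf{Step 3: bound $\alpha$ and compare.} By Step 1 and a union bound over the $t$-subsets of $[s]$,
\[
 \alpha\leq\Pr\bigl(|X_{p_1}\cap[s]|\geq t\bigr)\leq\binom{s}{t}p_1^t .
\]
Therefore
\[
 \alpha\beta\leq(p_1p_2)^t\binom{s}{t}p_2^{\,s-t}.
\]
Write $s=t+k$ with $k\geq1$. Then
\[
 \binom{t+k}{k}=\prod_{m=1}^{k}\frac{t+m}{m}\leq(t+1)^k,
\]
because each factor satisfies $(t+m)/m\leq t+1$. Combined with $p_2<1/(t+1)$, this gives $\binom{s}{t}p_2^{s-t}<1$, hence $\alpha\beta<(p_1p_2)^t$. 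The case $\mathcal Y_t\neq\emptyset$ is identical with the roles of $(\mathcal A,p_1)$ and $(\mathcal B,p_2)$ interchanged.

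The only step I expect to need care is Step 2, specifically that the shifted $t$-sets $E_x$ cover all of $[s]\setminus E$ so that their union with $E$ is $[s]$. This holds because $x$ ranges over all of $[s-1]\setminus E$. The rest is a routine estimate, and the strict inequality comes from the open range $p_2<1/(t+1)$.
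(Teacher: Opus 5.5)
Your proof is correct and follows the paper's argument. Lemma~\ref{lem:boundary} with $i=t$, $j=s$ gives $[s]\in G(\mathcal B)$, the union bound over $\binom{[s]}{t}$ gives $\mu_{p_1}(\mathcal A)\le\binom{s}{t}p_1^t$, and $\binom{t+k}{t}p_2^k\le((t+1)p_2)^k<1$ finishes; the only difference is your Step~2, which is valid but can be shortened. The paper gets $\mathcal B=\{B\subseteq[n]:[s]\subseteq B\}$ directly: $[s]$ is an inclusion-minimal member of $\mathcal B$ and every generating set of $\mathcal B$ lies in $[s]$, so $G(\mathcal B)=\{[s]\}$, and no further shifting argument is needed.
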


\begin{proof}
After interchanging $(\mathcal A,p_1)$ and
$(\mathcal B,p_2)$ if necessary, assume that
$\mathcal X_t\neq\emptyset$. Lemma~\ref{lem:boundary} gives
$\mathcal Y_s\neq\emptyset$. Hence, $[s]\in G(\mathcal B)$.
Since every generating set of $\mathcal B$ is contained in $[s]$, we have $G(\mathcal B)=\{[s]\}$ and hence
\[
 \mathcal B=\{B\subseteq[n]:[s]\subseteq B\},
 \qquad
 \mu_{p_2}(\mathcal B)=p_2^s.
\]
Since $[s]\in\mathcal B$, every $A\in\mathcal A$ satisfies
$|A\cap[s]|\geq t$.  Hence,
$
 \mathcal A
 \subseteq
 \bigcup_{T\in\binom{[s]}t}
 \{A\subseteq[n]:T\subseteq A\}.
$
It follows that
$\mu_{p_1}(\mathcal A)\leq\binom{s}{t}p_1^t$. Writing
$s=t+r$, where $r\geq1$, we obtain
\[
 \frac{\mu_{p_1}(\mathcal A)\mu_{p_2}(\mathcal B)}
      {(p_1p_2)^t}
 \leq\binom{t+r}{t}p_2^r=
 \left(\prod_{\ell=1}^r\frac{t+\ell}{\ell}\right)p_2^r\leq\bigl((t+1)p_2\bigr)^r<1,
\]
as desired.
\end{proof}

We next consider the case $s=t+2$.

\begin{lemma}\label{lem:excess-two}
Let $t\geq1$, put $s=t+2$, and let
$0<p_1,p_2<1/(t+1)$. If
$\mathcal A_0,\mathcal B_0\subseteq2^{[s]}$ are increasing and cross
$t$-intersecting, then
\[
 \mu_{p_1}(\mathcal A_0)\mu_{p_2}(\mathcal B_0)
 \leq(p_1p_2)^t.
\]
Equality holds if and only if $\mathcal A_0$ and $\mathcal B_0$ are
the same $t$-star.
\end{lemma}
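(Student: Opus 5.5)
The plan is to pass to complements inside $[s]$, where the cross $t$-intersection condition becomes a very restrictive union condition, and then to split into a few cases.

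\emph{Reformulation.} For $A,B\subseteq[s]$ with $s=t+2$, put $a=[s]\setminus A$ and $b=[s]\setminus B$. Then
\[
 |A\cap B|=s-|a\cup b|\geq t \iff |a\cup b|\leq 2 .
\]
Let $\mathcal A'=\{[s]\setminus A:A\in\mathcal A_0\}$ and $\mathcal B'=\{[s]\setminus B:B\in\mathcal B_0\}$. These are decreasing families, and $|a\cup b|\leq 2$ for all $a\in\mathcal A'$ and $b\in\mathcal B'$. The measure becomes
\[
 \mu_{p_1}(\mathcal A_0)=\sum_{a\in\mathcal A'}p_1^{s-|a|}q_1^{|a|}.
\]
In particular, every member of $\mathcal A'\cup\mathcal B'$ has size at most $2$. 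We may assume both families are nonempty, so $\emptyset\in\mathcal A'\cap\mathcal B'$. The $t$-star $\mathcal S_T$ corresponds to $\mathcal A'=2^{[s]\setminus T}$, which has measure $p^{t+2}(1+q/p)^2=p^t$.

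\emph{Case 1: no member of $\mathcal A'\cup\mathcal B'$ has size $2$.} Each factor is then at most
\[
 p^{t+2}\Bigl(1+(t+2)\tfrac qp\Bigr)=p^t\bigl((t+2)p-(t+1)p^2\bigr).
\]
The function $f(p)=(t+2)p-(t+1)p^2$ is increasing on $[0,1/(t+1)]$ and satisfies $f(1/(t+1))=1$. Hence each factor is strictly smaller than $p_i^t$, and the inequality is strict.

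\emph{Case 2: by symmetry, $\mathcal A'$ contains a $2$-set $a$.} Every $b\in\mathcal B'$ then satisfies $b\subseteq a$. Hence $\mathcal B'\subseteq 2^{a}$ and $\mu_{p_2}(\mathcal B_0)\leq p_2^t$, with equality exactly when $\mathcal B'=2^a$, that is, when $\mathcal B_0$ is the $t$-star with centre $[s]\setminus a$. We split according to what $\mathcal B'$ looks like.
\begin{itemize}
\item If $a\in\mathcal B'$, the same argument in reverse gives $\mathcal A'\subseteq 2^{a}$. Both factors are then at most $p_i^t$, and equality forces $\mathcal A'=\mathcal B'=2^a$, i.e.\ the same $t$-star.
\item Otherwise $\mathcal B'\subseteq\{\emptyset,\{x\},\{y\}\}$, where $a=\{x,y\}$.
\begin{itemize}
\item If $\mathcal B'=\{\emptyset\}$, then $\mathcal B_0=\{[s]\}$ and $\mu_{p_2}(\mathcal B_0)=p_2^{t+2}$. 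Every member of $\mathcal A_0$ has at least $t$ elements, so $\mu_{p_1}(\mathcal A_0)\leq\binom{t+2}{t}p_1^t$. As in Lemma~\ref{lem:size-t}, the ratio to $(p_1p_2)^t$ is at most $\binom{t+2}{2}p_2^2\leq((t+1)p_2)^2<1$.
\item If $\mathcal B'=\{\emptyset,\{x\}\}$, then $\mu_{p_2}(\mathcal B_0)=p_2^{t+1}$. Every $a'\in\mathcal A'$ must satisfy $|a'\cup\{x\}|\leq2$, so each $A\in\mathcal A_0$ contains at least $t$ elements of the $(t+1)$-set $[s]\setminus\{x\}$. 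Therefore $\mu_{p_1}(\mathcal A_0)\leq(t+1)p_1^t$, and the ratio to $(p_1p_2)^t$ is at most $(t+1)p_2<1$.
\item If $\mathcal B'=\{\emptyset,\{x\},\{y\}\}$, then every $2$-set in $\mathcal A'$ must contain both $x$ and $y$. So $\mathcal A'\subseteq 2^{a}$ and $\mu_{p_1}(\mathcal A_0)\leq p_1^t$. Meanwhile $\mathcal B'\subsetneq 2^a$ gives $\mu_{p_2}(\mathcal B_0)<p_2^t$, so the inequality is strict.
\end{itemize}
\end{itemize}

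\emph{Conclusion.} Collecting the cases gives the bound. Equality occurs only in the first sub-case of Case 2 with $\mathcal A'=\mathcal B'=2^a$, which is the same $t$-star for both families.

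\emph{Main difficulty.} The delicate points are Case 1, where strictness relies on the computation $f(1/(t+1))=1$ together with the monotonicity of $f$, and the careful enumeration of the possible forms of $\mathcal B'$ in Case 2. Once the complement reformulation is in place, all remaining bounds are elementary.
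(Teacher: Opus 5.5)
Your complement reformulation, Case 1, the sub-case $a\in\mathcal B'$, and the sub-cases $\mathcal B'=\{\emptyset\}$ and $\mathcal B'=\{\emptyset,\{x\}\}$ are all correct. The last two go via union bounds over $t$-stars, which is a tidy alternative to the paper's weight estimates.

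The gap is in the final sub-case $\mathcal B'=\{\emptyset,\{x\},\{y\}\}$. You correctly observe that the only $2$-set $\mathcal A'$ can contain is $a=\{x,y\}$. However, the conclusion $\mathcal A'\subseteq 2^a$ does not follow. A singleton $\{z\}$ with $z\notin a$ satisfies $|\{z\}\cup\{x\}|=|\{z\}\cup\{y\}|=2$, so it is compatible with every member of $\mathcal B'$. Hence $\mathcal A'$ may be as large as $\{\emptyset\}\cup\binom{[s]}{1}\cup\{a\}$, in which case
\[
 \mu_{p_1}(\mathcal A_0)=p_1^{t+2}+(t+2)p_1^{t+1}q_1+p_1^{t}q_1^2=p_1^t\bigl(1+tp_1q_1\bigr)>p_1^t .
\]
So the claimed bound $\mu_{p_1}(\mathcal A_0)\le p_1^t$ is false, and this sub-case is left open. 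It is exactly the configuration (the case $m=2$ in the paper) where the $\mathcal A_0$-factor exceeds its star value. You must trade that excess against the deficit of the $\mathcal B_0$-factor.

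The repair is short. In this sub-case $\mu_{p_1}(\mathcal A_0)\le p_1^t(1+tp_1q_1)$, while
\[
 \mu_{p_2}(\mathcal B_0)=p_2^{t+2}+2p_2^{t+1}q_2=p_2^t\bigl(1-q_2^2\bigr).
\]
The map $p\mapsto p(1-p)$ is increasing on $(0,1/2]$, and $p_1,p_2<1/(t+1)\le 1/2$. Hence $tp_1q_1<t^2/(t+1)^2$ and $1-q_2^2<1-t^2/(t+1)^2$. The product is therefore strictly less than $(p_1p_2)^t\bigl(1-t^4/(t+1)^4\bigr)$. This is the same constant the paper obtains by monotonicity in $u=q_1/p_1$ and $v=q_2/p_2$, evaluated at $u=v=t$.

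With this fix your case analysis is complete, and the equality characterization follows as you state. Note that the genuinely delicate step is this sub-case, not Case 1.
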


\begin{proof}
The conclusion is immediate if either family is empty. Otherwise,
define
\[
 \mathcal C=\{[s]\setminus A:A\in\mathcal A_0\},
 \qquad
 \mathcal E=\{[s]\setminus B:B\in\mathcal B_0\}.
\]
These families are closed under taking subsets, and cross
$t$-intersection is equivalent to
$|C\cup E|\leq2$ for all
$C\in\mathcal C$ and $E\in\mathcal E$.
Put $u=(1-p_1)/p_1$ and $v=(1-p_2)/p_2$. Then
$u,v>t=s-2$. For $z>0$ and $\mathcal H\subseteq2^{[s]}$, write
$W_z(\mathcal H)=\sum_{H\in\mathcal H}z^{|H|}$. For $C=[s]\setminus A$, the $p_1$-weight of $A$ is
$p_1^{s-|C|}(1-p_1)^{|C|}=p_1^su^{|C|}$. Moreover,
$p_1^{s-t}=p_1^2=(1+u)^{-2}$, and the same identities hold for
$p_2$, $v$, and $\mathcal E$.
Taking complements gives
\[
 \frac{\mu_{p_1}(\mathcal A_0)}{p_1^t}
 =
 \frac{W_u(\mathcal C)}{(1+u)^2},
 \qquad
 \frac{\mu_{p_2}(\mathcal B_0)}{p_2^t}
 =
 \frac{W_v(\mathcal E)}{(1+v)^2}.
\]

The families $\mathcal C$ and $\mathcal E$ are decreasing. Since all
weights are positive, we may enlarge them to a maximal pair of
decreasing families satisfying $|C\cup E|\leq2$ for all
$C\in\mathcal C$ and $E\in\mathcal E$.
Both families contain $\emptyset$, so all their members have size at
most two. 

\textbf{Suppose first that both families contain a two-element set.}
The union condition forces every such set on either side to be the
same set $R$. It also forces every member of both families to be
contained in $R$. Maximality now gives
$\mathcal C=\mathcal E=2^R$. Taking complements,
\[
 \mathcal A_0=\mathcal B_0
 =
 \{A\subseteq[s]:[s]\setminus R\subseteq A\}.
\]
Since $|[s]\setminus R|=t$, these families form a common $t$-star,
and equality holds.

\textbf{If neither family contains a two-element set,}  maximality gives
$
 \mathcal C=\mathcal E=\{F\subseteq[s]:|F|\leq1\}.
$
For $z>s-2$,
\[
 \frac{1+sz}{(1+z)^2}<1,
\]
because $(1+z)^2-(1+sz)=z(z-(s-2))>0$. Since
$
 W_u(\mathcal C)=1+su,
 W_v(\mathcal E)=1+sv,
$
and $u,v>s-2$, we have
\[
 \frac{\mu_{p_1}(\mathcal A_0)\mu_{p_2}(\mathcal B_0)}
      {(p_1p_2)^t}
 =
 \frac{1+su}{(1+u)^2}
 \frac{1+sv}{(1+v)^2}
 <1.
\]

\textbf{It remains to assume, after interchanging the two families if
necessary, that $\mathcal C$ contains a two-element set and
$\mathcal E$ does not.} Let
$U=\{r\in[s]:\{r\}\in\mathcal E\}$ and put $m=|U|$. Every
two-element member of $\mathcal C$ must contain $U$, so $m\leq2$.
Maximality gives
\[
 \mathcal E=\{\emptyset\}\cup\binom U1,
 \qquad
 \mathcal C=
 \{\emptyset\}\cup\binom{[s]}1
 \cup\left\{R\in\binom{[s]}2:U\subseteq R\right\}.
\]
If $m=0$, then
$W_u(\mathcal C)/(1+u)^2\leq\binom{s}{2}$, whereas
$W_v(\mathcal E)/(1+v)^2<1/(s-1)^2$. Their product is  less
than $s/(2(s-1))<1$.
If $m=1$, then
$W_u(\mathcal C)/(1+u)^2\leq s-1$, whereas
$W_v(\mathcal E)/(1+v)^2=1/(1+v)<1/(s-1)$. The product is again
strictly less than one.
Finally, suppose that $m=2$. For $z>0$, put
\[
 f_s(z)=\frac{1+sz+z^2}{(1+z)^2},
 \qquad
 g(z)=\frac{1+2z}{(1+z)^2}.
\]
Then
\[
 \frac{\mu_{p_1}(\mathcal A_0)}{p_1^t}=f_s(u),
 \qquad
 \frac{\mu_{p_2}(\mathcal B_0)}{p_2^t}=g(v).
\]
For $z>1$,
\[
 f_s'(z)=\frac{(s-2)(1-z)}{(1+z)^3}<0,
 \qquad
 g'(z)=-\frac{2z}{(1+z)^3}<0.
\]
Since $u,v>t$ and $s=t+2$,
\[
 f_s(u)g(v)
 <f_s(t)g(t)=\frac{(2t^2+2t+1)(2t+1)}{(t+1)^4}=1-\frac{t^4}{(t+1)^4}<1.
\]

 Since adding a set strictly
increases the corresponding weight, equality occurs only in the first
case, namely when $\mathcal A_0$ and $\mathcal B_0$ are the same
$t$-star.
\end{proof}

To apply Lemma~\ref{lem:excess-two} to the pair
$(\mathcal A,\mathcal B)$ from Subsection~\ref{sec:boundary-tools},
put
\[
 \mathcal A_0=\mathcal A\cap2^{[s]},
 \qquad
 \mathcal B_0=\mathcal B\cap2^{[s]}.
\]
Then
$\mathcal A=\mathcal D_s(\mathcal A_0)$ and
$\mathcal B=\mathcal D_s(\mathcal B_0)$.
It follows that 
\[
 \mu_{p_1}(\mathcal A)=\mu_{p_1}(\mathcal A_0),
 \qquad
 \mu_{p_2}(\mathcal B)=\mu_{p_2}(\mathcal B_0).
\]
Moreover, $\mathcal A_0$ and $\mathcal B_0$ are increasing and cross
$t$-intersecting. Consequently, Lemma~\ref{lem:excess-two} gives the required estimate
for $(\mathcal A,\mathcal B)$ when $s=t+2$.

\subsection{The exceptional case $t=3$ and $s=6$}

We consider the case $t=3$ and $s=6$. For
$\mathcal H\subseteq2^{[6]}$ and $z>0$, write
$
 W_z(\mathcal H)=\sum_{H\in\mathcal H}z^{|H|}.
$
The following lemma gives the required estimate.

\begin{lemma}\label{lem:six}
Let $\mathcal C,\mathcal E\subseteq2^{[6]}$ be decreasing families
such that
$
 |C\cup E|\leq3
$
for every $C\in\mathcal C$ and $E\in\mathcal E$. If $u,v>3$, then
\[
 \frac{W_u(\mathcal C)}{(1+u)^3}
 \frac{W_v(\mathcal E)}{(1+v)^3}
 \leq1.
\]
Equality holds if and only if
$
 \mathcal C=\mathcal E=2^R
$
for some $R\in\binom{[6]}{3}$.
\end{lemma}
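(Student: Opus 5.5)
The plan is to follow the template of Lemma~\ref{lem:excess-two}. Since all weights are positive, we may enlarge $\mathcal C$ and $\mathcal E$ to a maximal pair of decreasing families satisfying $|C\cup E|\leq3$. Any strict enlargement strictly increases the left-hand side, so the equality analysis only needs to be carried out for maximal pairs. Both families contain $\emptyset$, so every member has size at most $3$. Throughout we use the normalization $W_z(2^R)=(1+z)^3$ for $|R|=3$, and we reduce each estimate to the endpoint $u=v=3$ by monotonicity in $z>3$.

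\textbf{Case 1: both families contain a $3$-set.} The union condition forces every $3$-set on either side to be the same set $R$, and it forces every member of both families to lie in $R$. Maximality then gives $\mathcal C=\mathcal E=2^R$, and equality holds.

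\textbf{Case 2: neither family contains a $3$-set, or exactly one does.} By symmetry we may assume $\mathcal E$ has no $3$-set. Let $G$ be the graph on $[6]$ formed by the $2$-sets of $\mathcal E$, and let $U$ be the set of singletons of $\mathcal E$. The constraints translate as follows.
\begin{itemize}[leftmargin=*]
\item A $3$-set $K\in\mathcal C$ must contain every edge of $G$ and every vertex of $U$.
\item A $2$-set of $\mathcal C$ must meet every edge of $G$, with its union with each edge of size at most $3$, and must satisfy $|D\cup\{x\}|\leq 3$ trivially for singletons.
\item Singletons of $\mathcal C$ are unconstrained by singletons of $\mathcal E$.
\end{itemize}
By maximality, $\mathcal C$ is determined by $(G,U)$, and $\mathcal E$ is in turn the largest family compatible with $\mathcal C$. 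So the case analysis runs over the shape of $G$. If $G$ has two disjoint edges or a vertex of degree at least $3$, then $\mathcal C$ has no $3$-sets. Otherwise $G$ is empty, a single edge, a path with two edges, or a triangle. Within each shape we split further by whether $\mathcal C$ contains a $3$-set $R$; if it does, then $G$ and $U$ lie inside $R$. This leaves a short finite list of configurations, each with explicit weight polynomials.

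For each configuration we write
\[
 \frac{W_u(\mathcal C)}{(1+u)^3}=\frac{\sum_k c_k u^k}{(1+u)^3},
 \qquad
 \frac{W_v(\mathcal E)}{(1+v)^3}=\frac{\sum_k e_k v^k}{(1+v)^3},
\]
where $c_k,e_k$ count the $k$-sets in each family. We then bound each factor separately by its value at $z=3$. For this we use that $z^k/(1+z)^3$ is decreasing on $z>3$ for $k\leq2$, and that $\bigl((1+z)^3-z^3\bigr)/(1+z)^3$ and $\bigl((1+z)^3+az\bigr)/(1+z)^3$ are also decreasing there. Two representative computations:
\begin{itemize}[leftmargin=*]
\item Triangle with a $3$-set in $\mathcal C$: here $\mathcal C=2^R$ together with three extra singletons, and $\mathcal E=2^R\setminus\{R\}$. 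The product is at most
\[
 \frac{64+9}{64}\cdot\frac{37}{64}<1.
\]
\item No $2$-sets on either side: each factor is at most $(1+6z)/(1+z)^3\leq 19/64$.
\end{itemize}
The remaining shapes are handled the same way, each giving a strict inequality.

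The main obstacle is organizing the intermediate case, where $\mathcal E$ has many $2$-sets and $\mathcal C$ has many $2$-sets but no $3$-set, for example $G$ a star or a matching. There the crude bound $c_2\leq\binom62$ is too weak, and one must use the coupling between the two families. First, I would show that the $2$-sets of $\mathcal C$ and those of $\mathcal E$ form a cross-intersecting pair of graphs whose pairwise unions have size at most $3$; hence either both are stars at a common vertex or both lie inside a common triangle. Then $c_2,e_2\leq 5$ or $c_2,e_2\leq 3$, respectively. If this gets messy, the fallback is to observe that each factor is at most $(1+6z+c_2z^2)/(1+z)^3$ and to check the finitely many pairs $(c_2,e_2)$ numerically at $z=3$.
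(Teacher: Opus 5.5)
\textbf{The gap: reducing each factor to the endpoint $z=3$ by monotonicity.} Your case split is sound: passing to maximal pairs, Case~1, and the reduction of Case~2 to the shape of $G$. But the step everything rests on, bounding each factor separately by its value at $z=3$ via monotonicity, is false when the family containing a $3$-set has more than one $3$-set.

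Take $\mathcal E=2^{\{x,y\}}$. Let $\mathcal C$ consist of $\emptyset$, the six singletons, the nine $2$-sets meeting $\{x,y\}$, and the four $3$-sets containing $\{x,y\}$. This is a legitimate maximal pair in your Case~2, with $G$ a single edge and $U=\{x,y\}$. Here
\[
 \frac{W_u(\mathcal C)}{(1+u)^3}=\frac{4(1+u)^3-3(1+u)^2}{(1+u)^3}=4-\frac{3}{1+u},
 \qquad
 \frac{W_v(\mathcal E)}{(1+v)^3}=\frac{1}{1+v}.
\]
The first factor is strictly increasing in $u$, so its value $13/4$ at $u=3$ is not an upper bound. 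The product has supremum exactly $1$ over $u,v>3$, approached as $u\to\infty$ and $v\to3^+$. Hence no argument that evaluates both factors at the endpoint can settle this configuration. The same failure occurs when $V(G)\cup U$ is a single vertex or empty: then $\mathcal C$ has $10$ or $20$ three-element sets, and its factor tends to $10$ or $20$, above its value at $3$.

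The monotonicity facts you list only cover factors with $c_3\leq1$. What is needed is an estimate uniform in $z$ for the family with $3$-sets. Keep the endpoint bound $W_z(\mathcal E)/(1+z)^3<(1+3e_1+9e_2)/64=:M/64$ for the $3$-set-free family, which is valid. Then show $M\,W_z(\mathcal C)\leq64(1+z)^3$ coefficientwise for the other family. This is what the paper does in its last case, using $e_1\leq6$, $e_2\leq b_a$ and $e_3\leq c_\sigma$ with roles of $\mathcal C,\mathcal E$ swapped. In the example above $M=16$, and the cubic coefficients match exactly ($16\cdot4=64$), so this comparison has no slack.

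\textbf{A secondary error: the cross-intersecting graph claim.} In the case with no $3$-sets, your dichotomy (both graphs are stars at a common vertex, or both lie in a common triangle) is false. The single edge $\{1,2\}$ is cross-intersecting with all nine $2$-sets meeting it, and $\{12,13\}$ is cross-intersecting with $\{12,13,14,15,16,23\}$. Your numerical fallback therefore still needs the correct list of achievable pairs $(c_2,e_2)$.

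The right constraint is as follows. One edge leaves at most $9$ edges on the other side, two edges at most $6$, and three or more at most $5$. So $1\leq c_2\leq e_2$ forces $c_2+e_2\leq10$, and then
\[
 (19+9c_2)(19+9e_2)\leq\Bigl(19+\tfrac92\cdot10\Bigr)^2=64^2,
\]
while $c_2=0$ gives $19\cdot154<64^2$. The pair $(5,5)$, two stars at a common vertex, attains exactly $64^2$ at $z=3$. So strictness there comes only from $u,v>3$ together with the strict decrease of the $3$-set-free factors, which your listed facts do justify.
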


\begin{proof}
The result is immediate if either family is empty. Assume that both families are nonempty. Since they are decreasing,
they both contain $\emptyset$, and the hypothesis implies that every
member of either family has size at most three.
For a nonempty decreasing family $\mathcal H\subseteq2^{[6]}$, put
\[
 h_r=\left|\mathcal H\cap\binom{[6]}{r}\right|,
 \qquad
 f_{\mathcal H}(z)
 =
 \frac{W_z(\mathcal H)}{(1+z)^3}
 =
 \frac{1+h_1z+h_2z^2+h_3z^3}{(1+z)^3}.
\]
If $h_3=0$, differentiation gives
\[
 (1+z)^4f_{\mathcal H}'(z)
 =
 (h_1-3)+2(h_2-h_1)z-h_2z^2.
\]
The derivative of the polynomial on the right is
$
 -2h_1-2h_2(z-1)\leq0
$
for $z\geq1$, while its value at $z=3$ is
$
 -3-5h_1-3h_2<0.
$
Consequently,
\begin{equation}\label{eq:six-endpoint}
 f_{\mathcal H}(z)
 <
 f_{\mathcal H}(3)
 =
 \frac{1+3h_1+9h_2}{64}
 \qquad (z>3).
\end{equation}

\textbf{Suppose first that neither $\mathcal C$ nor $\mathcal E$ contains a
three-element set.} Let
$
 \mathcal P=\mathcal C\cap\binom{[6]}{2}
$
and
$
 \mathcal Q=\mathcal E\cap\binom{[6]}{2},
$
and put $a=|\mathcal P|$ and $b=|\mathcal Q|$. Every member of
$\mathcal P$ intersects every member of $\mathcal Q$.

If $a=0$, then \eqref{eq:six-endpoint}, together with
$h_1\leq6$ and $b\leq15$, gives
\[
 f_{\mathcal C}(u)f_{\mathcal E}(v)
 <
 \frac{19(19+9\cdot15)}{64^2}
 <1.
\]
The case $b=0$ is symmetric. We may therefore assume, after
interchanging $(\mathcal C,u)$ and $(\mathcal E,v)$ if necessary,
that $1\leq a\leq b$.

We claim that $a+b\leq10$. If $a=1$, at most nine two-element
subsets of $[6]$ intersect the unique member of $\mathcal P$, so
$b\leq9$. If $a=2$, at most six two-element subsets intersect both
members of $\mathcal P$. Finally, if $a\geq3$, choose three members of
$\mathcal P$. There are at most five two-element subsets of $[6]$
that intersect all three, with the maximum attained when the three
chosen sets have a common element. Hence, $b\leq5$, and the claim
follows from $a\leq b$.

By \eqref{eq:six-endpoint} and the arithmetic--geometric mean
inequality,
\[
 f_{\mathcal C}(u)f_{\mathcal E}(v)
 <
 \frac{(19+9a)(19+9b)}{64^2}\leq
 \frac{\bigl(19+\frac{9}{2}(a+b)\bigr)^2}{64^2}
 \leq1.
\]

\textbf{Next suppose that both families contain a three-element set.} Choose
$
 R\in\mathcal C\cap\binom{[6]}{3}
$
and
$
 T\in\mathcal E\cap\binom{[6]}{3}.
$
Since $|R\cup T|\leq3$, we have $R=T$. The same condition shows that
every member of either family is contained in $R$. Since the families
are decreasing and contain $R$, it follows that
$
 \mathcal C=\mathcal E=2^R.
$
In this case equality holds.

\textbf{It remains to consider the case in which exactly one family contains
a three-element set.} By symmetry, assume that $\mathcal E$ does and
$\mathcal C$ does not. Let
$
 S=\bigcup_{C\in\mathcal C}C,
$
put $\sigma=|S|$, and define
$
 \mathcal P=\mathcal C\cap\binom{S}{2}
$
and $a=|\mathcal P|$. Every three-element member of $\mathcal E$
contains every member of $\mathcal C$ and hence contains $S$.
Therefore,
$
 0\leq\sigma\leq3
$
and
$
 0\leq a\leq\binom{\sigma}{2}.
$
Since $\mathcal C$ is decreasing,
$
 W_u(\mathcal C)=1+\sigma u+au^2.
$
Put $M=1+3\sigma+9a$. By \eqref{eq:six-endpoint},
\[
 f_{\mathcal C}(u)<\frac{M}{64}.
\]

For $0\leq r\leq3$, let
$
 e_r=|\mathcal E\cap\binom{[6]}{r}|.
$
Every three-element member of $\mathcal E$ contains $S$, while every
two-element member of $\mathcal E$ intersects every member of
$\mathcal P$. Hence,
\[
 e_1\leq6,\qquad
 e_2\leq b_a,\qquad
 e_3\leq c_\sigma,
\]
where
\[
 c_\sigma=\binom{6-\sigma}{3-\sigma},
 \qquad
 b_0=15,
 \qquad
 b_a=12-3a\quad(1\leq a\leq3).
\]
Indeed, the bound for $a=0$ is immediate. For $a\geq1$, choose $S'\in\binom{[6]}{3}$ with $S\subseteq S'$.
Since $\mathcal P$ consists of $a$ distinct two-element subsets of
$S'$, we have
$
 \left|\bigcap_{P\in\mathcal P}P\right|=3-a.
$
A two-element set meeting every member of $\mathcal P$ is either
contained in $S'$, or has one element in this intersection and the
other in $[6]\setminus S'$. Hence,
$
 e_2\leq3+3(3-a)=12-3a.
$

It follows that
\[
 f_{\mathcal E}(v)
 \leq
 \frac{P_{\sigma,a}(v)}{(1+v)^3},
 \qquad
 P_{\sigma,a}(v)
 =
 1+6v+b_av^2+c_\sigma v^3.
\]

Suppose first that $a\leq2$. Then $M\leq28$. For $a=0,1,2$,
respectively,
$
 Mb_a\leq150,171,168.
$
Similarly, for $\sigma=0,1,2,3$, respectively,
$
 Mc_\sigma\leq20,40,64,28.
$
Together with $M\leq28$ and $6M\leq168$, these inequalities give,
by comparing coefficients,
$
 MP_{\sigma,a}(v)\leq64(1+v)^3.
$
Therefore,
\[
 f_{\mathcal C}(u)f_{\mathcal E}(v)
 <
 \frac{M}{64}
 \frac{P_{\sigma,a}(v)}{(1+v)^3}
 \leq1.
\]

The only remaining possibility is $a=3$, which forces $\sigma=3$.
In this case,
$
 M=37
$
and
$
 P_{3,3}(v)=1+6v+3v^2+v^3.
$
For $v>3$,
\[
 64(1+v)^3-37P_{3,3}(v)
 =
 27v^3+81v^2-30v+27
 >0.
\]
Since
$
 f_{\mathcal C}(u)<37/64
$
and
$
 f_{\mathcal E}(v)\leq P_{3,3}(v)/(1+v)^3,
$
the preceding inequality gives
\[
 f_{\mathcal C}(u)f_{\mathcal E}(v)
 <
 \frac{37P_{3,3}(v)}{64(1+v)^3}
 <1.
\]

Therefore, equality occurs
precisely when
$
 \mathcal C=\mathcal E=2^R
$
for some $R\in\binom{[6]}{3}$.
\end{proof}

We apply Lemma~\ref{lem:six} to the pair
$(\mathcal A,\mathcal B)$ from
Subsection~\ref{sec:boundary-tools} in the remaining case $t=3$ and
$s=6$. Let
\[
 \mathcal A_0=\mathcal A\cap2^{[6]},
 \qquad
 \mathcal B_0=\mathcal B\cap2^{[6]}.
\]
Then
$
 \mathcal A=\mathcal D_6(\mathcal A_0)
$
and
$
 \mathcal B=\mathcal D_6(\mathcal B_0).
$
Therefore,
\[
 \mu_{p_1}(\mathcal A)=\mu_{p_1}(\mathcal A_0),
 \qquad
 \mu_{p_2}(\mathcal B)=\mu_{p_2}(\mathcal B_0).
\]
The families $\mathcal A_0$ and $\mathcal B_0$ are increasing and
cross $3$-intersecting. Define their complement families by
\[
 \mathcal C=\{[6]\setminus A:A\in\mathcal A_0\},
 \qquad
 \mathcal E=\{[6]\setminus B:B\in\mathcal B_0\}.
\]
Then $\mathcal C$ and $\mathcal E$ are decreasing. Moreover, for
$C=[6]\setminus A$ and $E=[6]\setminus B$,
$
 |C\cup E|
 =
 6-|A\cap B|
 \leq3.
$

Put
$
 u=(1-p_1)/p_1
$
and
$
 v=(1-p_2)/p_2.
$
Since $p_1,p_2<1/4$, we have $u,v>3$. For
$A=[6]\setminus C$,
\[
 p_1^{|A|}(1-p_1)^{6-|A|}
 =
 p_1^{6-|C|}(1-p_1)^{|C|}
 =
 p_1^6u^{|C|},
\]
and the analogous identity holds for $\mathcal B_0$. Summing these
identities and using
$
 p_1=(1+u)^{-1}
$
and
$
 p_2=(1+v)^{-1},
$
we obtain
\[
 \frac{\mu_{p_1}(\mathcal A)}{p_1^3}
 =
 \frac{W_u(\mathcal C)}{(1+u)^3},
 \qquad
 \frac{\mu_{p_2}(\mathcal B)}{p_2^3}
 =
 \frac{W_v(\mathcal E)}{(1+v)^3}.
\]
Lemma~\ref{lem:six} now gives
\[
 \mu_{p_1}(\mathcal A)\mu_{p_2}(\mathcal B)
 \leq(p_1p_2)^3.
\]
If equality holds, then
$
 \mathcal C=\mathcal E=2^R
$
for some $R\in\binom{[6]}{3}$. Taking complements and extending
outside $[6]$ gives
\[
 \mathcal A=\mathcal B
 =
 \{F\subseteq[n]:[6]\setminus R\subseteq F\}.
\]
Thus, $\mathcal A$ and $\mathcal B$ form the common $3$-star with
center $[6]\setminus R$.

\subsection{Completion of the proof for
\texorpdfstring{$t\geq3$}{t>=3}}

We now combine the preceding lemmas to prove the product inequality
and characterize the equality case for $t\geq3$.

\begin{proof}[Proof of Theorem~\ref{thm:product-measure} for
$t\geq3$ in the open range]
Fix $0<p_1,p_2<1/(t+1)$, and let
$(\mathcal A,\mathcal B)$ be a shifted product-maximizing pair as in
Subsection~\ref{sec:boundary-tools}. Product maximality gives
$
 \mu_{p_1}(\mathcal A)\mu_{p_2}(\mathcal B)\geq(p_1p_2)^t,
$
as witnessed by a common $t$-star.

Every generating set has size at least $t$.  If $s=t$, then
$
 G(\mathcal A)=G(\mathcal B)=\{[t]\},
$
and hence
$
 \mathcal A=\mathcal B=\mathcal S_{[t]}.
$
Assume that $s>t$.
Choose
$
 i=\min\{r:\mathcal X_r\cup\mathcal Y_r\neq\emptyset\}.
$
After interchanging the two families if necessary, assume that
$\mathcal X_i\neq\emptyset$, and put $j=s+t-i$.
Lemma~\ref{lem:boundary} gives $\mathcal Y_j\neq\emptyset$, so
$i\leq j$. If $i=t$, Lemma~\ref{lem:size-t} gives a product strictly
smaller than $(p_1p_2)^t$, a contradiction. Therefore,
\[
 t+1\leq i\leq j,
 \qquad
 s+t=i+j\geq2t+2,
\]
and hence $s\geq t+2$.

If $s=t+2$, the application of Lemma~\ref{lem:excess-two} above gives
$
 \mu_{p_1}(\mathcal A)\mu_{p_2}(\mathcal B)\leq(p_1p_2)^t.
$
Product maximality forces equality, so $\mathcal A$ and $\mathcal B$
are a common $t$-star. Shiftedness then gives
$
 \mathcal A=\mathcal B=\mathcal S_{[t]},
$
contradicting $s=t+2$.

Thus, $s\geq t+3$. If $(t,s)\neq(3,6)$,
Lemma~\ref{lem:algebra} verifies
\eqref{eq:boundary-criterion}, again a contradiction. In the remaining
case $t=3$ and $s=6$, the application of Lemma~\ref{lem:six} in the
preceding subsection gives a product at most $(p_1p_2)^3$, with
equality only for a common $3$-star. Product maximality forces
equality, and shiftedness then gives $s=3$, contradicting $s=6$.

Consequently, every shifted product-maximizing pair is
$(\mathcal S_{[t]},\mathcal S_{[t]})$, and the required inequality
follows.
Now let $\mathcal F_1,\mathcal F_2\subseteq2^{[n]}$ attain equality.
The pair is product-maximizing, so both families are increasing.
Simultaneous shifting produces a shifted equality pair
$
 (\mathcal F_1',\mathcal F_2')
 =
 (\mathcal S_{[t]},\mathcal S_{[t]}).
$
Reversing the shifts and applying
Lemma \ref{lem:inverse-shift-star} gives
$
 \mathcal F_i=\mathcal S_{T_i}
$
for some $T_i\in\binom{[n]}{t}$, $i=1,2$. Cross
$t$-intersection forces $T_1=T_2$. Conversely, every common
$t$-star attains equality.
\end{proof}

\section{The exact theorem for $t=2$}\label{sec:t2}

\subsection{Reduction to four boundary types}

Assume that $t=2$ and $0<p_1,p_2<1/3$. Let
$(\mathcal A,\mathcal B)$ be a shifted product-maximizing pair, and
retain the notation introduced in
Section~\ref{sec:generating-set}. We postpone the cases $s=2,3,4$ to
Subsection~\ref{sec:t2-completion} and first assume that $s\geq5$. 

Let $i$ be the smallest integer such that
$\mathcal X_i\cup\mathcal Y_i\neq\emptyset$. After interchanging
$(\mathcal A,p_1)$ and $(\mathcal B,p_2)$ if necessary, we may assume
that $\mathcal X_i\neq\emptyset$. Put $j=s+2-i$. By
Lemma~\ref{lem:boundary}, $\mathcal Y_j\neq\emptyset$, and the choice
of $i$ gives $i\leq j$. The case $i=2$ is excluded by
Lemma~\ref{lem:size-t}, since its conclusion would contradict the
product attained by a common $2$-star. Hence,
$3\leq i\leq j\leq s-1$ and $i+j=s+2$.

We first apply the boundary estimate obtained in
Subsection~\ref{sec:boundary-tools}. As
shown in the proof of Lemma~\ref{lem:algebra}, $H_{s,r}(p)$ is
strictly decreasing in $p$ whenever $r<s$. At $p=1/3$, we have
$H_{s,r}(1/3)=2(3s-r)/(3(s+r)).$
Then
\[
 H_{s,i}(1/3)H_{s,j}(1/3)
 =
 \frac{4(3s-i)(3s-j)}
      {9(s+i)(s+j)}.
\]
Since the denominator is positive, this product is at least one if
and only if $\Delta\geq0$, where
\[
 \Delta
 =
 4(3s-i)(3s-j)-9(s+i)(s+j)=
 6s(s-7)-5ij.
\]
Moreover,
$
 ij\leq\frac{(i+j)^2}{4}
      =\frac{(s+2)^2}{4}.
$
It follows that
\[
 4\Delta=24s(s-7)-20ij\geq24s(s-7)-5(s+2)^2=(s-10)(19s+2).
\]
Consequently, $\Delta\geq0$ for every $s\geq10$, with equality only
when $(s,i,j)=(10,6,6)$. If $\Delta>0$, then
$H_{s,i}(1/3)H_{s,j}(1/3)>1$. When $(s,i,j)=(10,6,6)$, we have
$H_{10,6}(1/3)^2=1$. Since $H_{10,6}(p)$ is strictly decreasing and
$p_1,p_2<1/3$, it follows that
$H_{10,6}(p_1)H_{10,6}(p_2)>1$. Therefore,
\eqref{eq:boundary-criterion} excludes every case with $s\geq10$.

It remains to consider $5\leq s\leq9$. We refine the preceding
estimate by using all upper levels of $\mathcal X_i^-$ and
$\mathcal Y_j^-$.

For $i\leq\ell\leq s-1$, let
$\nabla_\ell\mathcal X_i^-$ be the family of all $\ell$-subsets of
$[s-1]$ containing a member of $\mathcal X_i^-$. Counting pairs
$(X,R)$ with $X\in\mathcal X_i^-$,
$R\in\nabla_\ell\mathcal X_i^-$, and $X\subseteq R$ gives
\[
 \frac{|\nabla_\ell\mathcal X_i^-|}
      {\binom{s-1}{\ell}}
 \geq
 \frac{|\mathcal X_i^-|}
      {\binom{s-1}{i-1}}.
\]
Let $R\in\nabla_\ell\mathcal X_i^-$ and choose
$X\in\mathcal X_i^-$ with $X\subseteq R$. Since $\ell\geq i$, there
is some $a\in R\setminus X$. Now $X\cup\{s\}\in\mathcal X_i$, and
shiftedness gives $X\cup\{a\}\in\mathcal A$. As $\mathcal A$ is
increasing and $X\cup\{a\}\subseteq R$, it follows that
$R\in\mathcal A$ and hence
$\mathcal D_{s-1}(R)\subseteq\mathcal A$.

As $\ell$ and $R$ vary, these families are pairwise disjoint. They
are also disjoint from $\mathcal D_s(\mathcal X_i)$, because their
members contain at least $i$ elements of $[s-1]$, whereas every
member of $\mathcal D_s(\mathcal X_i)$ contains exactly $i-1$
elements of $[s-1]$. For $1\leq r<s$, define
\begin{align*}
 \widehat K_{s,r}(p)
 =
 1+
 \frac{\displaystyle\sum_{\ell=r}^{s-1}
 \binom{s-1}{\ell}p^\ell(1-p)^{s-1-\ell}}
 {\displaystyle\binom{s-1}{r-1}p^r(1-p)^{s-r}},\qquad
 \widehat H_{s,r}(p)=
 \frac{1-p}{p}\,
 \frac{\widehat K_{s,r}(p)}
 {\widehat K_{s,r}(p)+(1-p)/p}.
\end{align*}
Recall that
\[
 q_1=1-p_1,\qquad q_2=1-p_2,\qquad
 \alpha=\mu_{p_1}(\mathcal A),\qquad
 \beta=\mu_{p_2}(\mathcal B),
\]
and 
\begin{align*}
 x
 &=\mu_{p_1}\bigl(\mathcal D_s(\mathcal X_i)\bigr)
   =|\mathcal X_i|p_1^iq_1^{s-i},
&
 x^+
 &=\mu_{p_1}\bigl(\mathcal D_s(\mathcal X_i^-)\bigr)
   =\frac{q_1}{p_1}x,\\
 y
 &=\mu_{p_2}\bigl(\mathcal D_s(\mathcal Y_j)\bigr)
   =|\mathcal Y_j|p_2^jq_2^{s-j},
&
 y^+
 &=\mu_{p_2}\bigl(\mathcal D_s(\mathcal Y_j^-)\bigr)
   =\frac{q_2}{p_2}y.
\end{align*}
For $R\in\nabla_\ell\mathcal X_i^-$, the family
$\mathcal D_{s-1}(R)$ has $\mu_{p_1}$-measure
$p_1^\ell q_1^{s-1-\ell}$. Since these families are pairwise
disjoint and are disjoint from $\mathcal D_s(\mathcal X_i)$, we have
\[
 \alpha
 \geq
 x+\sum_{\ell=i}^{s-1}
 |\nabla_\ell\mathcal X_i^-|
 p_1^\ell q_1^{s-1-\ell}\geq
 x+
 \frac{|\mathcal X_i^-|}{\binom{s-1}{i-1}}
 \sum_{\ell=i}^{s-1}
 \binom{s-1}{\ell}
 p_1^\ell q_1^{s-1-\ell}=x\widehat K_{s,i}(p_1),
\]
where the last equality follows from
$|\mathcal X_i^-|=|\mathcal X_i|$ and
$x=|\mathcal X_i|p_1^iq_1^{s-i}$. By symmetry, $\beta\geq y\widehat K_{s,j}(p_2)$. Since
$x^+=q_1x/p_1$ and $y^+=q_2y/p_2$, inequality
\eqref{eq:boundary-upper} now implies that every product-maximizing
pair must satisfy
\[\widehat H_{s,i}(p_1)\widehat H_{s,j}(p_2)\leq1\].

We next show that $\widehat H_{s,r}(p)$ is strictly decreasing in
$p$. Fix $1\leq r<s$, write $q=1-p$, and put
\[
 m=s-1,\qquad k=r-1,\qquad z=\frac{p}{q},
 \qquad d=m-k+1.
\]
Define
\[
 U(z)=\sum_{a=0}^{m-k}\binom{m}{k+a}z^a,
 \qquad
 W(z)=(1+z)U(z)=\sum_{a=0}^{d}w_az^a.
\]
We first rewrite the previously defined functions
$\widehat K_{s,r}$ and $\widehat H_{s,r}$ in terms of $z$. Since
$r=k+1$ and $s-1=m$, we have
\[
 \sum_{\ell=r}^{s-1}
 \binom{s-1}{\ell}p^\ell q^{s-1-\ell}
 =
 p^kq^{m-k}\bigl(U(z)-w_0\bigr),\qquad \binom{s-1}{r-1}p^rq^{s-r}
 =
 w_0p^{k+1}q^{m-k},
\]
where $w_0=\binom{m}{k}$. Therefore,
\[
 \widehat K_{s,r}(p)
 =
 1+\frac{U(z)-w_0}{w_0p}=
 \frac{(1+z)U(z)-w_0}{w_0z}
 =
 \frac{W(z)-w_0}{w_0z}.
\]
Since $p=z/(1+z)$ and $q/p=1/z$, the definition of
$\widehat H_{s,r}$ now gives
\begin{align*}
 \widehat H_{s,r}\left(\frac{z}{1+z}\right)
 =
 \frac{1}{z}
 \frac{\dfrac{W(z)-w_0}{w_0z}}
      {\dfrac{W(z)-w_0}{w_0z}+\dfrac{1}{z}}=
 \frac{W(z)-w_0}{zW(z)}.
\end{align*}
By Pascal's identity, the coefficients of $W$ satisfy
$
 w_0=\binom{m}{k}
$
and
$
 w_a=\binom{m+1}{k+a}
$
for
$
1\leq a\leq d$.
 Put
\[
 \rho_a=\frac{w_{a+1}}{w_a}\quad(0\leq a<d),
 \qquad
 \rho_d=0.
\]
Then
\[
 \rho_0=\frac{m+1}{k+1},
 \qquad
 \rho_a=\frac{m+1-k-a}{k+a+1}
 \quad(1\leq a<d).
\]
These numbers are strictly decreasing. Indeed,
\[
 \rho_0-\rho_1
 =
 \frac{m+k^2+2k+2}{(k+1)(k+2)}>0,\quad
 \rho_a-\rho_{a+1}
 =
 \frac{m+2}{(k+a+1)(k+a+2)}>0 \quad(1\leq a<d).
\]
Set
\[
 \Phi(z)
 =
 \widehat H_{s,r}\left(\frac{z}{1+z}\right).
\]
Observe that
\[
 \frac{W(z)-w_0}{z}
 =
 \sum_{a=1}^{d}w_az^{a-1}
 =
 \sum_{a=0}^{d-1}w_{a+1}z^a=
 \sum_{a=0}^{d}\rho_aw_az^a,
\]
where the last equality follows from
$\rho_a=w_{a+1}/w_a$ for $0\leq a<d$ and $\rho_d=0$.
We may write
\[
 \Phi(z)
 =
 \frac{\displaystyle\sum_{a=0}^{d}\rho_aw_az^a}
      {\displaystyle\sum_{a=0}^{d}w_az^a}.
\]
Differentiating the preceding expression for $\Phi$ gives
\begin{align*}
 z\Phi'(z)
 &=
 \frac{
 \left(\sum_{a=0}^{d}a\rho_aw_az^a\right)W(z)
 -
 \left(\sum_{a=0}^{d}\rho_aw_az^a\right)
 \left(\sum_{b=0}^{d}bw_bz^b\right)}
 {W(z)^2}\\
 &=
 \frac{1}{W(z)^2}
 \sum_{a,b=0}^{d}
 (a-b)\rho_aw_aw_bz^{a+b}\\
 &=
 \frac{1}{2W(z)^2}
 \sum_{a,b=0}^{d}
 (a-b)(\rho_a-\rho_b)w_aw_bz^{a+b}.
\end{align*}
The last equality follows by interchanging $a$ and $b$ in the double
sum and then averaging the two expressions.
For $a\neq b$, the factors $a-b$ and $\rho_a-\rho_b$ have opposite
signs because $\rho_0>\rho_1>\cdots>\rho_d$. Hence,
$\Phi'(z)<0$. Finally, $z=p/(1-p)$ is strictly increasing in $p$, so
$\widehat H_{s,r}(p)$ is strictly decreasing in $p$.

Recall that every product-maximizing pair with boundary sizes $i$ and
$j$ must satisfy
\[
 \widehat H_{s,i}(p_1)\widehat H_{s,j}(p_2)\leq1.
\]
Since $\widehat H_{s,r}(p)$ is strictly decreasing in $p$ and
$p_1,p_2<1/3$, we have
\[
 \widehat H_{s,i}(p_1)\widehat H_{s,j}(p_2)
 >
 \widehat H_{s,i}(1/3)\widehat H_{s,j}(1/3).
\]
Consequently, a boundary type $(s,i,j)$ is impossible whenever
$\widehat H_{s,i}(1/3)\widehat H_{s,j}(1/3)>1$.

For $(s,i,j)=(7,3,6)$, direct calculation gives
$\widehat H_{7,3}(1/3)\widehat H_{7,6}(1/3)=6260/6149>1$.
For $s=8$, the values of
$\widehat H_{8,i}(1/3)\widehat H_{8,j}(1/3)$ corresponding to
$(i,j)=(3,7),(4,6),(5,5)$ are, respectively, $79084/72495$,
$291884/278883$, and $1331716/1292769$. For $s=9$, the corresponding
values for $(i,j)=(3,8),(4,7),(5,6)$ are $931684/807993$,
$4487716/4050729$, and $9522340/8812521$. Each of these six values is
also greater than one. Hence, all seven boundary types are excluded.
The only possibilities that remain are
\begin{equation}
 (s,i,j)=(5,3,4),\ (6,3,5),\ (6,4,4),\ (7,4,5).
 \label{eq:t2-remaining}
\end{equation}
The preceding estimate does not exclude these four types. We treat them separately in the next two
subsections.

\subsection{Kruskal--Katona reduction}
\label{sec:t2-kk}

Fix one of the four types in \eqref{eq:t2-remaining}, and put
\[
 N=s-1,\qquad
 \mathcal X=\mathcal X_i^-\subseteq\binom{[N]}{i-1},
 \qquad
 \mathcal Y=\mathcal Y_j^-\subseteq\binom{[N]}{j-1}.
\]
Write
\[
 u=|\mathcal X|,
 \qquad
 v=|\mathcal Y|.
\]
For a nonempty family
$\mathcal H\subseteq\binom{[N]}r$ and
$0\leq a\leq N-r$, let $\nabla_{r+a}\mathcal H$ be the family of
all $(r+a)$-subsets of $[N]$ that contain a member of $\mathcal H$,
and put
\[
 u_a(\mathcal H)=|\nabla_{r+a}\mathcal H|,
 \qquad
 P_{\mathcal H}(z)
 =
 \sum_{a=0}^{N-r}u_a(\mathcal H)z^a.
\]
Thus, $u_0(\mathcal H)=|\mathcal H|$. For $z>0$, define
\begin{equation}
 W_{\mathcal H}(z)
 =
 \frac{(1+z)P_{\mathcal H}(z)}{|\mathcal H|},
 \qquad
 L_{\mathcal H}(z)
 =
 \frac{W_{\mathcal H}(z)-1}{zW_{\mathcal H}(z)}.
 \label{eq:upper-shadow-functions}
\end{equation}

We first derive the inequality satisfied by
$L_{\mathcal X}$ and $L_{\mathcal Y}$.

\begin{lemma}\label{lem:upper-shadow-factor}
Let
\[
 z_1=\frac{p_1}{1-p_1},
 \qquad
 z_2=\frac{p_2}{1-p_2}.
\]
Let $(\mathcal A,\mathcal B)$ be a product-maximizing pair of type
$(s,i,j)$, and let
$\mathcal X=\mathcal X_i^-$ and $\mathcal Y=\mathcal Y_j^-$
be the corresponding boundary families. Then
\begin{equation}
 L_{\mathcal X}(z_1)L_{\mathcal Y}(z_2)\leq 1.
 \label{eq:upper-shadow-factor}
\end{equation}
\end{lemma}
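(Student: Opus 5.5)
The plan is to repeat the boundary argument of Subsection~\ref{sec:boundary-tools}, but to keep the exact upper-shadow counts $u_a(\mathcal X)$ rather than bounding them through binomial averaging. We start from \eqref{eq:boundary-upper}, which holds for every product-maximizing pair:
\[
 \frac{\alpha}{\alpha+x^+}\,\frac{\beta}{\beta+y^+}\leq\frac{p_1p_2}{q_1q_2}=z_1z_2 .
\]
It therefore suffices to show the two one-sided estimates
\[
 \frac{\alpha}{\alpha+x^+}\geq z_1L_{\mathcal X}(z_1),
 \qquad
 \frac{\beta}{\beta+y^+}\geq z_2L_{\mathcal Y}(z_2).
\]
Multiplying them and dividing by $z_1z_2>0$ then gives \eqref{eq:upper-shadow-factor}.

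\textbf{Lower bound for $\alpha$.} This uses the disjointness argument already given before the definition of $\widehat K_{s,r}$. For $0\leq a\leq N-(i-1)$ and each $R\in\nabla_{i-1+a}\mathcal X$, consider the family $\mathcal D_{s-1}(R)$.
\begin{itemize}
 \item For $a=0$, take instead the sets in $\mathcal D_s(\mathcal X_i)$, which have total measure $x=u\,p_1^iq_1^{s-i}$.
 \item For $a\geq1$, the set $R$ has $\ell=i-1+a\geq i$ elements. By shiftedness and increasingness, $\mathcal D_{s-1}(R)\subseteq\mathcal A$, and this family has measure $p_1^{\ell}q_1^{s-1-\ell}$.
\end{itemize}
All these families are pairwise disjoint, by comparing $|F\cap[s-1]|$ and, within one level, the trace on $[s-1]$. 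Hence
\[
 \alpha\geq u\,p_1^iq_1^{s-i}+\sum_{a\geq1}u_a(\mathcal X)\,p_1^{i-1+a}q_1^{s-i-a}.
\]

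\textbf{Rewriting in terms of $z_1$.} Factor out $p_1^{i-1}q_1^{s-i}$ and use $p_1/q_1=z_1$. Then
\[
 \alpha\geq p_1^{i-1}q_1^{s-i}\Bigl(u\,p_1+\sum_{a\geq1}u_a z_1^a\Bigr)
 = p_1^{i-1}q_1^{s-i}\bigl(P_{\mathcal X}(z_1)-u\,q_1\bigr).
\]
We also have $x^+=\frac{q_1}{p_1}x=u\,p_1^{i-1}q_1^{s-i+1}$. Since $a\mapsto a/(a+c)$ is increasing,
\[
 \frac{\alpha}{\alpha+x^+}\geq\frac{P_{\mathcal X}(z_1)-uq_1}{P_{\mathcal X}(z_1)}.
\]
Now substitute $q_1=1/(1+z_1)$ and $W_{\mathcal X}(z_1)=(1+z_1)P_{\mathcal X}(z_1)/u$. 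The right side becomes
\[
 1-\frac{1}{W_{\mathcal X}(z_1)}=\frac{W_{\mathcal X}(z_1)-1}{W_{\mathcal X}(z_1)}=z_1L_{\mathcal X}(z_1).
\]
The bound for $\beta$ follows by symmetry.

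\textbf{Main difficulty.} The only delicate points are the disjointness bookkeeping and checking that this algebra matches definition \eqref{eq:upper-shadow-functions} exactly. In particular, the $a=0$ term has to enter with weight $p_1$ rather than $1$, because $\mathcal D_s(\mathcal X_i)$ requires the element $s$. This is precisely what produces the $-1$ in $W_{\mathcal X}-1$ after normalization.
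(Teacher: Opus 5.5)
Your proposal is correct and follows essentially the same route as the paper. You lower-bound $\alpha$ by the disjoint pieces $\mathcal D_s(\mathcal X_i)$ and $\mathcal D_{N}(R)$ for $R$ in the higher upper shadows of $\mathcal X$. You then convert this, via monotonicity, into $\frac{\alpha}{\alpha+x^+}\geq 1-1/W_{\mathcal X}(z_1)=z_1L_{\mathcal X}(z_1)$ and combine with \eqref{eq:boundary-upper}. The only difference is cosmetic: the paper normalizes by $x$ and bounds $\frac{q_1}{p_1}\frac{\alpha}{\alpha+x^+}$ as an increasing function of $\alpha/x$, whereas you factor out $p_1^{i-1}q_1^{s-i}$; the two computations are equivalent.
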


\begin{proof}
We first consider $\mathcal X$. Write
$p=p_1$, $q=1-p$, $z=p/q$, and $k=i-1$. Then  $p=z/(1+z)$. By the definition of $x$
in Subsection~\ref{sec:boundary-tools},
\[
 x=up^{k+1}q^{N-k}.
\]
Let $R\in\nabla_{k+a}\mathcal X$ for some $a\geq1$. Choose
$X\in\mathcal X$ with $X\subseteq R$ and
$b\in R\setminus X$. Since
$X\cup\{s\}\in\mathcal X_i\subseteq\mathcal A$ and $b<s$,
shiftedness gives $X\cup\{b\}\in\mathcal A$. Increasingness then
implies that $R\in\mathcal A$, and hence
$\mathcal D_N(R)\subseteq\mathcal A$.

As $a$ and $R$ vary, the families $\mathcal D_N(R)$ are pairwise
disjoint. They are also disjoint from
$\mathcal D_s(\mathcal X_i)$ because every set $R$ considered above has at
least $i$ elements in $[N]$, whereas every member of
$\mathcal D_s(\mathcal X_i)$ has exactly $i-1$ elements in $[N]$.
Consequently,
\begin{align*}
 \alpha
 \geq
 x+\sum_{a=1}^{N-k}
 u_a(\mathcal X)p^{k+a}q^{N-k-a},\qquad \frac{\alpha}{x}
 \geq
 1+\frac{P_{\mathcal X}(z)/u-1}{p}
 =
 \frac{W_{\mathcal X}(z)-1}{z}.
\end{align*}
Since $x^+=(q/p)x=x/z$,  we have
\[
 \frac{q}{p}\frac{\alpha}{\alpha+x^{+}}
 =
 \frac{\alpha/x}{z(\alpha/x)+1}.
\]
The right-hand side is increasing in $\alpha/x$. Therefore, using
$
 \frac{\alpha}{x}
 \geq
 \frac{W_{\mathcal X}(z)-1}{z},
$
we obtain
\[
 \frac{q}{p}\frac{\alpha}{\alpha+x^{+}}
 \geq
 \frac{(W_{\mathcal X}(z)-1)/z}
      {W_{\mathcal X}(z)}
 =
 L_{\mathcal X}(z).
\]
The same argument applied to $\mathcal Y$ gives
\[
 \frac{q_2}{p_2}\frac{\beta}{\beta+y^+}
 \geq
 L_{\mathcal Y}(z_2).
\]
Combining these inequalities with
\eqref{eq:boundary-upper}, we obtain
\[
 L_{\mathcal X}(z_1)L_{\mathcal Y}(z_2)
 \leq
 \frac{q_1q_2}{p_1p_2}
 \frac{\alpha}{\alpha+x^+}
 \frac{\beta}{\beta+y^+}\leq1,
\]
as desired.
\end{proof}

We first recall the upper shadow form of the Kruskal--Katona
theorem \cite{Katona,Kruskal}. For distinct
$A,B\in\binom{[N]}{k}$, write $A<_{\mathrm{lex}}B$ if
$\min(A\triangle B)\in A$, and let $\mathcal L_{N,k}(m)$ denote
the first $m$ members of $\binom{[N]}{k}$ in this order. If
$\mathcal H\subseteq\binom{[N]}{k}$ and $m=|\mathcal H|$, then
\[
 |\nabla_{k+a}\mathcal H|
 \geq
 |\nabla_{k+a}\mathcal L_{N,k}(m)|
 \qquad
 (0\leq a\leq N-k).
\]
This formulation follows from the usual lower shadow version of the
Kruskal--Katona theorem by taking complements and reversing the order of the ground set. We now use this inequality to derive a lower bound
for $L_{\mathcal H}$.
Write
\[
 W_{N,r,m}(z)=W_{\mathcal L_{N,r}(m)}(z),
 \qquad
 L_{N,r,m}(z)=L_{\mathcal L_{N,r}(m)}(z).
\]

\begin{lemma}\label{lem:t2-kk-lex}
Let $\mathcal H\subseteq\binom{[N]}r$ be nonempty and put
$m=|\mathcal H|$. Then
\begin{equation}
 L_{\mathcal H}(z)\geq L_{N,r,m}(z)
 \qquad (z>0).
 \label{eq:lex-comparison}
\end{equation}
Moreover, $L_{N,r,m}(z)$ is strictly decreasing in $z$ for
\[
 (N,r)=(4,2),(4,3),(5,2),(5,3),(5,4),(6,3),(6,4).
\]
\end{lemma}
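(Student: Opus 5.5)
The comparison \eqref{eq:lex-comparison} reduces to a monotonicity statement. Write $W=W_{\mathcal H}(z)$ and note that
\[
 L_{\mathcal H}(z)=\frac{1}{z}\Bigl(1-\frac{1}{W}\Bigr).
\]
For fixed $z>0$ this is a strictly increasing function of $W$. It therefore suffices to show that $W_{\mathcal H}(z)\geq W_{N,r,m}(z)$. Since
\[
 W_{\mathcal H}(z)=\frac{(1+z)}{m}\sum_{a}u_a(\mathcal H)z^a,
\]
the denominator $m=|\mathcal H|=|\mathcal L_{N,r}(m)|$ is the same for both families, and all coefficients $z^a(1+z)/m$ are positive. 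The Kruskal--Katona inequality stated just above gives $u_a(\mathcal H)\geq u_a(\mathcal L_{N,r}(m))$ for each $a$. Summing these with positive weights yields the comparison. This first part is routine.

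For the monotonicity, I would imitate the argument already used for $\widehat H_{s,r}$. Write $W_{N,r,m}(z)=\sum_{a=0}^{d}w_az^a$, where $d=N-r+1$, $w_0=1$ and
\[
 w_a=\frac{u_a+u_{a-1}}{m},
\]
with $u_a=u_a(\mathcal L_{N,r}(m))$ and the convention $u_{-1}=u_{d}=0$. Then
\[
 L_{N,r,m}(z)=\frac{\sum_{a=0}^{d-1}w_{a+1}z^a}{\sum_{a=0}^d w_az^a}=\frac{\sum_a\rho_aw_az^a}{\sum_a w_az^a},
\]
where $\rho_a=w_{a+1}/w_a$ for $a<d$ and $\rho_d=0$. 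The symmetrization identity derived earlier gives
\[
 z\Phi'(z)=\frac{1}{2W^2}\sum_{a,b}(a-b)(\rho_a-\rho_b)w_aw_bz^{a+b}.
\]
Hence strict decrease follows as soon as $\rho_0>\rho_1>\cdots>\rho_d$. If strict monotonicity of the ratios fails for some $m$, I would weaken the requirement. It is enough that the ratio sequence be nonincreasing and nonconstant. Alternatively, one can check directly that the numerator polynomial $N'D-ND'$ has all coefficients nonpositive and at least one negative; this is a finite coefficient check.

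The main work is the finite verification over the listed pairs $(N,r)$ and all $1\leq m\leq\binom{N}{r}$. For each such triple I would compute the lex-initial family $\mathcal L_{N,r}(m)$ and its upper shadow sizes $u_a$. This is easy for initial segments. For example, for $\mathcal L_{N,r}(m)$ with $m\leq\binom{N-1}{r-1}$, every member contains $1$, and the shadows are computed recursively through the cascade representation of $m$. From these values I would form $w_a$ and check that the ratios decrease. The number of cases is small: $m$ ranges up to $\binom{6}{3}=20$ at most, so a table in the proof suffices.

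The cases I expect to be delicate are those where the $u_a$ saturate early, i.e.\ reach $\binom{N}{r+a}$. In these cases $w_a$ can jump, and log-concavity of $(w_a)$ could fail. For those cases I would instead verify directly that $N'D-ND'\le0$ coefficientwise, where $N$ and $D$ are the numerator and denominator above. Even that could fail for some small $m$. In that event I would restrict the claim to $z\leq 1/2$, which corresponds to $p<1/3$ and is the only range needed, and check the sign of the derivative polynomial on $(0,1/2]$ via its explicit coefficients.
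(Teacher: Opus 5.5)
Your proposal is correct, and its structure matches the paper's. The comparison is the same Kruskal--Katona argument followed by monotonicity of $L$ in $W$. The monotonicity claim is reduced, as in the paper, to strict decrease of the coefficient ratios $w_{a+1}/w_a$ (the paper's $c_{a+1}/c_a$ with $c_a=mw_a$), combined with the symmetrization identity.

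The difference is in how the ratio inequalities are established. The paper observes that the sets containing the $h$th lexicographic $r$-set $A_h$, but no earlier one, are exactly $A_h\cup T$ with $T\subseteq\{\max A_h+1,\dots,N\}$. The upset of $\mathcal L_{N,r}(m)$ therefore splits into disjoint cubes, and $mW_{N,r,m}(z)=\sum_{h=1}^m(1+z)^{e_h}$ with $e_h=N-\max A_h+1$. The recursive structure of the exponent sequence in lex order then parametrizes the coefficients by a few integers: $a,b$ for $d=3$, and $g,b,a$ with $b=\binom{g}{2}+u$, $a=\binom{g+1}{3}+v$ for $d=4$, subject to explicit constraints. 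From this, $c_\ell^2>c_{\ell-1}c_{\ell+1}$ is proved algebraically for every $m$ at once.

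You instead propose to enumerate all triples $(N,r,m)$ and check the ratios numerically. This is a legitimate finite verification, and the list is shorter than it looks, since the coefficients depend only on $d=N-r+1$ and $m$. It would succeed, but you have left it undone, so as written the proposal is a plan rather than a proof.

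The paper's computation also shows that your worry about saturation is unfounded: strict log-concavity holds in every case, so none of your fallbacks is needed. This matters most for the last fallback. Restricting to $z\le 1/2$ would prove less than the lemma states, which is strict decrease for all $z>0$, even though it would suffice for the later application.

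The paper's route is hand-checkable and explains why log-concavity holds. It also has a side benefit: the same cube decomposition and exponent sequences are reused immediately afterwards to compute the values $L_{N,r,m}(1/2)$ needed in the case analysis. Your route is more mechanical but needs no structural insight.
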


\begin{proof}
List the members of $\binom{[N]}r$ in lexicographic order as
$A_1,A_2,\ldots, A_{\binom{N}r}$. The sets containing $A_h$ but no earlier $A_\ell$ are precisely
\[
 A_h\cup T,
 \qquad
 T\subseteq\{\max A_h+1,\ldots,N\}.
\]
Indeed, suppose that $A_h\subseteq R$ and that
$R\setminus A_h$ contains an element $b<\max A_h$. Choose
$c\in A_h$ with $c>b$, and let
\[
 B=(A_h\setminus\{c\})\cup\{b\}.
\]
Then $B\subseteq R$ and
$\min(B\triangle A_h)=b\in B$, so
$B<_{\mathrm{lex}}A_h$. Hence, $R$ contains a $r$-set preceding
$A_h$.
Conversely, suppose that $R=A_h\cup T$, where every element of $T$
is larger than $\max A_h$. If $B\subseteq R$ is a $r$-set distinct
from $A_h$, then
\[
 \min(A_h\triangle B)\in A_h,
\]
and hence $A_h<_{\mathrm{lex}}B$. Thus, $R$ contains no $r$-set
preceding $A_h$.

For each $1\leq h\leq m$, let
\[
 \mathcal C_h
 =
 \{R\subseteq[N]:
   A_h\subseteq R
   \text{ and }
   A_\ell\nsubseteq R\text{ for every }\ell<h\}.
\]
By the preceding argument,
\[
 \mathcal C_h
 =
 \{A_h\cup T:
   T\subseteq\{\max A_h+1,\ldots,N\}\}.
\]
Now let $R\subseteq[N]$ contain at least one of
$A_1,\ldots,A_m$. There is a unique smallest index $h$ such that
$A_h\subseteq R$, and the definition above gives $R\in\mathcal C_h$.
Hence, the collections $\mathcal C_1,\ldots,\mathcal C_m$ are
pairwise disjoint and together contain exactly all sets containing
at least one member of $\mathcal L_{N,r}(m)$.
For a fixed $h$, each $R\in\mathcal C_h$ has the unique form
$R=A_h\cup T$, and its contribution to
$P_{\mathcal L_{N,r}(m)}(z)$ is $z^{|T|}$. Therefore,
\[
 \sum_{R\in\mathcal C_h}z^{|R|-r}
 =
 \sum_{T\subseteq\{\max A_h+1,\ldots,N\}}z^{|T|}
 =
 (1+z)^{N-\max A_h}.
\]
Summing over $h$ gives
\begin{align}\label{eq:lex-decomposition}
 P_{\mathcal L_{N,r}(m)}(z)
=
 \sum_{h=1}^{m}(1+z)^{N-\max A_h},\qquad
 W_{N,r,m}(z)
 =
 \frac{1}{m}\sum_{h=1}^{m}
 (1+z)^{N-\max A_h+1}.
\end{align}

\textbf{By the Kruskal--Katona theorem,} for every
$0\leq a\leq N-r$,
\[
 u_a(\mathcal H)
 \geq
 u_a(\mathcal L_{N,r}(m)).
\]
Since $z>0$, summing these inequalities after multiplication by
$z^a$ gives
\[
 P_{\mathcal H}(z)
 =
 \sum_{a=0}^{N-r}u_a(\mathcal H)z^a\geq
 \sum_{a=0}^{N-r}
 u_a(\mathcal L_{N,r}(m))z^a
 =
 P_{\mathcal L_{N,r}(m)}(z).
\]
Both families have size $m$. Therefore,
\[
 W_{\mathcal H}(z)
 =
 \frac{(1+z)P_{\mathcal H}(z)}{m}
 \geq
 \frac{(1+z)P_{\mathcal L_{N,r}(m)}(z)}{m}
 =
 W_{N,r,m}(z).
\]
Consequently,
\[
 L_{\mathcal H}(z)-L_{N,r,m}(z)
 =
 \frac{1}{z}
 \left(
 \frac{1}{W_{N,r,m}(z)}
 -
 \frac{1}{W_{\mathcal H}(z)}
 \right)
 \geq0,
\]
which proves \eqref{eq:lex-comparison}.

\textbf{It remains to prove the asserted monotonicity.} Put
\[
d=N-r+1, \qquad  e_h=N-\max A_h+1.
\]
By \eqref{eq:lex-decomposition},
\begin{align}\label{adf1}
 mW_{N,r,m}(z)
 =
 \sum_{h=1}^{m}(1+z)^{e_h}.
\end{align}
Since $1\leq e_h\leq d$, expanding each term by the binomial theorem
gives
\[
 mW_{N,r,m}(z)
 =
 \sum_{a=0}^{d}c_az^a,
 \qquad
 c_a=
 \sum_{1\leq h\leq m, e_h\geq a}
 \binom{e_h}{a}.
\]
The coefficients $c_a$ depend on $d$, $r$, and $m$. Since these
parameters are fixed in each of the following cases, we suppress
this dependence from the notation.
In particular, $c_0=m$.
Moreover, $e_1=d$ and $e_h<d$ for $h>1$, so $c_d=1$.
Using the definition of $L_{N,r,m}(z)$, we obtain
\[
 L_{N,r,m}(z)
 =
 \frac{mW_{N,r,m}(z)-m}
      {zmW_{N,r,m}(z)}=
 \frac{\displaystyle\sum_{a=1}^{d}c_az^a}
      {\displaystyle z\sum_{a=0}^{d}c_az^a}=
 \frac{\displaystyle\sum_{a=0}^{d-1}c_{a+1}z^a}
      {\displaystyle\sum_{a=0}^{d}c_az^a}.
\]

To determine the possible coefficient sequences, extend
$A_1,\ldots,A_m$ to the lexicographic ordering of all
$r$-subsets of $[d+r-1]$. For such a set $A$, define the exponent 
\[
 e(A)=d+r-\max A.
\]
Thus, $e(A_h)=e_h$ for $1\leq h\leq m$. Let
$\omega_{d,r}$ denote the ordered sequence of the values $e(A)$
as $A$ runs through all $r$-subsets of $[d+r-1]$ in
lexicographic order. In particular, $e_1,\ldots,e_m$ are the first
$m$ terms of $\omega_{d,r}$.
 The
$r$-sets are arranged according to their smallest elements. All sets
with smallest element $1$ come first, followed by those with smallest
element $2$, and so on, until the sets with smallest element $d$.
Fix $t\in[d]$ and consider the sets whose smallest element is $t$.
For each such set $A$, let
\[
 B=\{x-t:x\in A\setminus\{t\}\}.
\]
Then $B$ is an $(r-1)$-subset of $[d+r-1-t]$, and
\[
 d+r-\max A
 =
 (d-t+1)+(r-1)-\max B.
\]

Every $r$-set $A$ with smallest element $t$ can be written uniquely
as
$
 A=\{t\}\cup\{t+x:x\in B\},
$
where $B\in\binom{[d+r-1-t]}{r-1}$. If $A,A'$ correspond to
$B,B'$, respectively, then
$
 \min(A\triangle A')
 =
 t+\min(B\triangle B').
$
Moreover, this element belongs to $A$ if and only if
$\min(B\triangle B')$ belongs to $B$. Hence,
\[
 A<_{\mathrm{lex}}A'
 \quad\text{if and only if}\quad
 B<_{\mathrm{lex}}B'.
\]
Thus, as the sets $A$ with smallest element $t$ are listed in
lexicographic order, the corresponding sets $B$ run through all
$(r-1)$-subsets of $[d+r-1-t]$ in the same order. Since
$
 d+r-\max A
 =
 (d-t+1)+(r-1)-\max B,
$
the resulting sequence of exponents is exactly
$\omega_{d-t+1,r-1}$.
It follows that $\omega_{d,r}$ is obtained by first writing the terms
of $\omega_{d,r-1}$, then those of $\omega_{d-1,r-1}$, and continuing
in this way until $\omega_{1,r-1}$. When $r=1$, the ground set is $[d]$, and its $1$-subsets appear in
lexicographic order as
$
 \{1\},\{2\},\ldots,\{d\}.
$
For the set $\{t\}$, the corresponding exponent is
$
 e(\{t\})=d+1-t.
$
Thus,
\[
 \omega_{d,1}=(d,d-1,\ldots,1).
\]

\textbf{Suppose that $d=2$.} The first $r$-set in lexicographic order is
$[r]$, whose exponent is $2$.
Every other $r$-subset of $[r+1]$ contains $r+1$ and hence has
exponent $1$. Thus, the first $m$ terms of $\omega_{2,r}$ consist of
one $2$ and $m-1$ copies of $1$. 
It follows from \eqref{adf1} that
\[
 mW_{N,r,m}(z)
 =(1+z)^2+(m-1)(1+z)=m+(m+1)z+z^2.
\]
Hence,
$
 (c_0,c_1,c_2)=(m,m+1,1).
$
Consequently
\[
 c_1^2-c_0c_2
 =(m+1)^2-m
 =m^2+m+1>0.
\]
Therefore,
\[
 \frac{c_1}{c_0}>
 \frac{c_2}{c_1}.
\]

\textbf{Suppose that $d=3$.}
For $r=1$, we have
$
 \omega_{3,1}=(3,2,1).
$
For $r\geq2$, the relation established above gives
\[
 \omega_{3,r}
 =
 \bigl(
 \omega_{3,r-1},
 \omega_{2,r-1},
 \omega_{1,r-1}
 \bigr),
\]
where the right-hand side means that the terms of the three sequences
are listed one after another in the indicated order.
By the case $d=2$, the sequence $\omega_{2,r-1}$ consists of one
$2$ followed by $r-1$ terms equal to $1$, while
$\omega_{1,r-1}=(1)$. Hence,
\[
 \omega_{3,r}
 =
 \bigl(
 \omega_{3,r-1},(2,1,\ldots,1)
 \bigr),
\]
where the final parenthesized group consists of one $2$ followed by
$r$ terms equal to $1$. Starting with
$\omega_{3,1}=(3,(2,1))$ and applying this relation successively
gives
\[
 \omega_{3,r}
 =
 \bigl(
 3,\,(2,1),\,(2,1,1),\ldots,(2,1,\ldots,1)
 \bigr),
\]
where, for each $1\leq j\leq r$, the $j$th parenthesized group
consists of one $2$ followed by $j$ terms equal to $1$. The inner
parentheses are used only to separate these consecutive groups.
Let
\[
 a=|\{h\in[m]:e_h=1\}|,
 \qquad
 b=|\{h\in[m]:e_h=2\}|.
\]
 If $b=0$,
then $a=0$. If $b\geq1$, then
\[
 \binom{b}{2}\leq a\leq\binom{b+1}{2}.
\]
Since the first $m$ exponents  consist of one $3$, together with
$b$ terms equal to $2$ and $a$ terms equal to $1$, we obtain
\[
\begin{aligned}
 mW_{N,r,m}(z)
 &=
 (1+z)^3+b(1+z)^2+a(1+z)\\
 &=
 (a+b+1)+(a+2b+3)z+(b+3)z^2+z^3.
\end{aligned}
\]
Thus,
\[
 (c_0,c_1,c_2,c_3)
 =
 (a+b+1,a+2b+3,b+3,1).
\]
A direct calculation gives
\begin{align*}
 c_1^2-c_0c_2
 &=
 a^2+3b^2+3ab+3a+8b+6>0,\\
 c_2^2-c_1c_3
 &=
 b^2+4b+6-a\geq
 b^2+4b+6-\binom{b+1}{2}=
 \frac{b^2+7b+12}{2}>0.
\end{align*}
Therefore,
\[
 \frac{c_1}{c_0}>
 \frac{c_2}{c_1}>
 \frac{c_3}{c_2}.
\]

\textbf{Finally, suppose that $d=4$.} For $r=1$, we have
$
 \omega_{4,1}=(4,3,2,1).
$
For $r\geq2$, the decomposition according to the smallest element
shows that
$
 \omega_{4,r}
 =
 \bigl(\omega_{4,r-1},\omega_{3,r}\bigr),
$
where the terms of the two sequences on the right are listed one
after another. Applying this relation successively gives
\[
 \omega_{4,r}
 =
 \bigl(4,\omega_{3,1},\omega_{3,2},\ldots,\omega_{3,r}\bigr).
\]
Let $a$, $b$, and $g$ be the numbers of terms equal to $1$, $2$,
and $3$, respectively, among $e_1,\ldots,e_m$. The term $4$ occurs
once and is the first term. If $g=0$, then $m=1$ and $a=b=0$.
Suppose that $g\geq1$. Since each $\omega_{3,j}$ begins with its
unique term equal to $3$, the first $m$ terms contain all terms of
\[
 \omega_{3,1},\omega_{3,2},\ldots,\omega_{3,g-1}
\]
and an initial segment of $\omega_{3,g}$.
By the case $d=3$, the full sequence $\omega_{3,j}$ contains one
term equal to $3$, $j$ terms equal to $2$, and
$\binom{j+1}{2}$ terms equal to $1$. Let $u$ and $v$ be the numbers
of terms equal to $2$ and $1$, respectively, in the part taken from
$\omega_{3,g}$. Again by the case $d=3$,
\[
 0\leq u\leq g,
 \qquad
 \binom{u}{2}\leq v\leq\binom{u+1}{2}.
\]
Therefore,
\[
\begin{aligned}
 b
 &=
 \sum_{j=1}^{g-1}j+u
 =
 \binom{g}{2}+u,\\
 a
 &=
 \sum_{j=1}^{g-1}\binom{j+1}{2}+v
 =
 \binom{g+1}{3}+v.
\end{aligned}
\]
These formulas also hold when $g=0$ by taking $u=v=0$.
The first $m$ exponents consist of one $4$, together with
$g$ terms equal to $3$, $b$ terms equal to $2$, and $a$ terms equal
to $1$. Hence,
\[
\begin{aligned}
 mW_{N,r,m}(z)
 =&
 (1+z)^4+g(1+z)^3+b(1+z)^2+a(1+z)\\
 =&
 (a+b+g+1)
 +(a+2b+3g+4)z\\
 &
 +(b+3g+6)z^2+(g+4)z^3+z^4.
\end{aligned}
\]
Thus,
\[
 (c_0,c_1,c_2,c_3,c_4)
 =
 (a+b+g+1,a+2b+3g+4,b+3g+6,g+4,1).
\]
A direct calculation gives
\[
 c_1^2-c_0c_2
 =
 a^2+3b^2+6g^2+3ab+3ag+8bg
 +2a+9b+15g+10>0.
\]
Moreover, since
\[
 b=\binom{g}{2}+u
 \leq
 \binom{g+1}{2},
\]
we have
\[
\begin{aligned}
 c_3^2-c_2c_4
 &=
 g^2+5g+10-b\\
 &\geq
 g^2+5g+10-\binom{g+1}{2}\\
 &=
 \frac{g^2+9g+20}{2}>0.
\end{aligned}
\]
It remains to prove that $c_2^2>c_1c_3$. Put
$
 \Delta=c_2^2-c_1c_3.
$
Then
\[
 \Delta
 =
 b^2+4bg+4b+6g^2+20g+20-a(g+4).
\]
Substituting
\[
 b=\binom{g}{2}+u,
 \qquad
 a=\binom{g+1}{3}+v
\]
and simplifying, we obtain
\[
 12\Delta
 =
 g^4+10g^3+77g^2+224g+240+12u(g^2+3g+4)+12u^2-12(g+4)v.
\]
Since $v\leq\binom{u+1}{2}$,
\[
 12\Delta
 \geq g^4+10g^3+77g^2+224g+240+6u\bigl(2g^2+5g+4-(g+2)u\bigr).
\]
Finally, $0\leq u\leq g$ implies
\[
 2g^2+5g+4-(g+2)u
 \geq
 g^2+3g+4>0.
\]
Therefore, $\Delta>0$, and hence
\[
 \frac{c_1}{c_0}>
 \frac{c_2}{c_1}>
 \frac{c_3}{c_2}>
 \frac{c_4}{c_3}.
\]

\textbf{For every pair $(N,r)$ listed in the statement of the lemma,
$d=N-r+1$ belongs to $\{2,3,4\}$.} The three cases above therefore
prove that
\[
 c_\ell^2>c_{\ell-1}c_{\ell+1}
 \qquad
 (1\leq\ell<d)
\]
for every pair under consideration. Define
\[
 \rho_\ell=\frac{c_{\ell+1}}{c_\ell}
 \quad(0\leq\ell<d),
 \qquad
 \rho_d=0.
\]
Since all the coefficients are positive, the preceding inequalities
give
\[
 \rho_0>\rho_1>\cdots>\rho_{d-1}>\rho_d=0.
\]
Furthermore,
\[
 L_{N,r,m}(z)
 =
 \frac{\displaystyle\sum_{\ell=0}^{d-1}
 c_{\ell+1}z^\ell}
 {\displaystyle\sum_{\ell=0}^{d}c_\ell z^\ell}=
 \frac{\displaystyle\sum_{\ell=0}^{d}
 \rho_\ell c_\ell z^\ell}
 {\displaystyle\sum_{\ell=0}^{d}c_\ell z^\ell}.
\]
It follows that
\begin{align*}
 zL_{N,r,m}'(z)
 &=
 \frac{
 \left(\displaystyle\sum_{\ell=0}^{d}
 \ell\rho_\ell c_\ell z^\ell\right)
 \left(\displaystyle\sum_{k=0}^{d}c_kz^k\right)}
 {\left(\displaystyle\sum_{\ell=0}^{d}c_\ell z^\ell\right)^2}-
 \frac{
 \left(\displaystyle\sum_{\ell=0}^{d}
 \rho_\ell c_\ell z^\ell\right)
 \left(\displaystyle\sum_{k=0}^{d}kc_kz^k\right)}
 {\left(\displaystyle\sum_{\ell=0}^{d}c_\ell z^\ell\right)^2}\\
 &=
 \frac{\displaystyle
 \sum_{\ell,k=0}^{d}
 (\ell-k)\rho_\ell c_\ell c_kz^{\ell+k}}
 {\displaystyle
 \left(\sum_{\ell=0}^{d}c_\ell z^\ell\right)^2}=
 \frac{\displaystyle
 \sum_{0\leq\ell<k\leq d}
 (\ell-k)(\rho_\ell-\rho_k)c_\ell c_kz^{\ell+k}}
 {\displaystyle
 \left(\sum_{\ell=0}^{d}c_\ell z^\ell\right)^2}.
\end{align*}
For $\ell<k$, we have
\[
 \ell-k<0
 \qquad\text{and}\qquad
 \rho_\ell-\rho_k>0.
\]
Since $z>0$ and all $c_\ell$ are positive, every term in the
numerator is negative. Therefore,
\[
 L_{N,r,m}'(z)<0,
\]
as required.
\end{proof}

The descriptions of the exponent sequences in the preceding proof
give
\begin{equation}
\begin{aligned}
 \omega_{2,3}
 &=
 (2,1,1,1),\\
 \omega_{3,4}
 &=
 (3,2,1,2,1,1,2,1,1,1, 2,1,1,1,1),\\
 \omega_{4,3}
 &=
 (4,3,2,1,3,2,1,2,1,1, 3,2,1,2,1,1,2,1,1,1).
\end{aligned}
\label{eq:t2-exponent-sequences}
\end{equation}
The first $4$, $14$, and $19$ terms of these sequences, respectively,
are used in the calculations below.

We next compute the values of $L_{N,r,m}(z)$ at $z=1/2$.  
Recall that $m=|\mathcal H|$ and
$\mathcal H\subseteq\binom{[N]}r$. Since $N=d+r-1$, we have
\[
 1\leq m\leq\binom{N}{r}
 =
 \binom{d+r-1}{r}.
\]
For fixed $d$, the decomposition of the exponent sequences established above shows that $\omega_{d,r}$ is an initial segment of
$\omega_{d,r+1}$. Since
$
 m\leq\binom{d+r-1}{r},
$
the first $m$ exponents in these two sequences are identical.
Let $e_h^{(r)}$ and $e_h^{(r+1)}$ denote the $h$th terms of
$\omega_{d,r}$ and $\omega_{d,r+1}$, respectively. Since
$\omega_{d,r}$ is an initial segment of $\omega_{d,r+1}$, we have
\[
 e_h^{(r)}=e_h^{(r+1)}
 \qquad (1\leq h\leq m).
\]
Equation \eqref{adf1} therefore gives
\[
 W_{d+r-1,r,m}(1/2)
 =
 \frac{1}{m}\sum_{h=1}^{m}
 \left(\frac32\right)^{e_h^{(r)}}=
 \frac{1}{m}\sum_{h=1}^{m}
 \left(\frac32\right)^{e_h^{(r+1)}}=
 W_{d+r,r+1,m}(1/2).
\]
Since
\[
 L_{N,r,m}(z)
 =
 \frac{W_{N,r,m}(z)-1}{zW_{N,r,m}(z)},
\]
the equality of the two values of $W$ at $z=1/2$ implies that
\[
 L_{d+r-1,r,m}(1/2)
 =
 L_{d+r,r+1,m}(1/2)=
 2-\frac{2m}
 {\displaystyle\sum_{h=1}^{m}(3/2)^{e_h}},
\]
where $e_h= e_h^{(r)}=e_h^{(r+1)}$.
For brevity, set
\[
 L_m^{(d)}
 :=
 L_{d+r-1,r,m}(1/2)
 =
 L_{d+r,r+1,m}(1/2).
\]
The preceding equality shows that this definition is independent of
the choice of the integer $r$ whenever
$1\leq m\leq\binom{d+r-1}{r}$, and hence the notation is well defined.

For $1\leq h\leq m$, define
\[
 w_h^{(d)}=3^{e_h}2^{d-e_h},
 \qquad
 S_m^{(d)}=\sum_{h=1}^{m}w_h^{(d)}.
\]
Then
\begin{equation}
 L_m^{(d)}
 =
 2-\frac{2^{d+1}m}{S_m^{(d)}}.
 \label{eq:t2-half-formula}
\end{equation}
Taking the first $4$, $14$, and $19$ terms, respectively, from the
sequences in \eqref{eq:t2-exponent-sequences} and substituting them
into
$
 w_h^{(d)}=3^{e_h}2^{d-e_h},
$
we obtain
\[
\begin{aligned}
 \bigl(w_1^{(2)},\ldots,w_4^{(2)}\bigr)
 &=
 (9,6,6,6),\\
 \bigl(w_1^{(3)},\ldots,w_{14}^{(3)}\bigr)
 &=
 (27,18,12,18,12,12,18,12,12,12, 18,12,12,12),\\
 \bigl(w_1^{(4)},\ldots,w_{19}^{(4)}\bigr)
 &=
 (81,54,36,24,54,36,24,36,24,24,54,36,24,36,24,24,36,24,24).
\end{aligned}
\]
Taking partial sums and applying \eqref{eq:t2-half-formula}, we obtain
\begin{equation}
\begin{array}{r@{\quad}l@{\quad}l@{\quad}l}
 d=2:
 &
 \displaystyle\min_{1\leq m\leq2}L_m^{(2)}=\frac{14}{15}
 &
 \displaystyle\min_{1\leq m\leq3}L_m^{(2)}=\frac67
 &
 \displaystyle\min_{1\leq m\leq4}L_m^{(2)}=\frac{22}{27};
 \\[2mm]
 d=3:
 &
 \displaystyle\min_{1\leq m\leq2}L_m^{(3)}=\frac{58}{45}
 &
 \displaystyle\min_{1\leq m\leq5}L_m^{(3)}=\frac{94}{87}
 &
 \displaystyle\min_{1\leq m\leq7}L_m^{(3)}=\frac{34}{33},
 \\
 &
 \displaystyle\min_{1\leq m\leq8}L_m^{(3)}=\frac{130}{129}
 &
 \displaystyle\min_{1\leq m\leq9}L_m^{(3)}=\frac{46}{47}
 &
 \displaystyle\min_{1\leq m\leq14}L_m^{(3)}=\frac{190}{207};
 \\[2mm]
 d=4:
 &
 \displaystyle\min_{1\leq m\leq8}L_m^{(4)}=\frac{434}{345}
 &
 \displaystyle\min_{1\leq m\leq9}L_m^{(4)}=\frac{50}{41}
 &
 \displaystyle\min_{1\leq m\leq19}L_m^{(4)}=\frac{742}{675}.
\end{array}
\label{eq:t2-lex-minima}
\end{equation}

Recall that, for the fixed type $(s,i,j)$, we have
\[
 N=s-1,\qquad
 \mathcal X=\mathcal X_i^-
 \subseteq\binom{[N]}{i-1},\qquad
 \mathcal Y=\mathcal Y_j^-
 \subseteq\binom{[N]}{j-1},\qquad u=|\mathcal X|,
 \qquad
 v=|\mathcal Y|.
\]
We call $\mathcal X$ \textit{full} if
$\mathcal X=\binom{[N]}{i-1}$, and \textit{proper} otherwise. We use the same
terminology for $\mathcal Y$.

\begin{lemma}
\label{lem:t2-proper-reduction}
Suppose that both $\mathcal X$ and $\mathcal Y$ are proper. Then the
type $(s,i,j)=(7,4,5)$ cannot occur, and the parameters of the fixed
product-maximizing pair satisfy one of the following conditions:
\begin{equation}
\begin{aligned}
 &(s,i,j)=(5,3,4),
 &&3\leq u\leq5,\quad v=3;\\
 &(s,i,j)=(6,3,5),
 &&(u,v)=(9,4);\\
 &(s,i,j)=(6,4,4),
 &&(u,v)\in\{(8,9),(9,8),(9,9)\}.
\end{aligned}
\label{eq:t2-dense-cases}
\end{equation}
\end{lemma}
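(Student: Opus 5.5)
The plan is to turn the inequality of Lemma~\ref{lem:upper-shadow-factor} into a purely numerical condition on $(u,v)$. Then compare it with the table \eqref{eq:t2-lex-minima}, case by case, over the four types in \eqref{eq:t2-remaining}.

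\textbf{Step 1: a necessary numerical condition.} Since $p_1,p_2<1/3$, we have $z_1,z_2<1/2$. Put $r_1=i-1$, $r_2=j-1$, $d_1=N-r_1+1$ and $d_2=N-r_2+1$. By \eqref{eq:lex-comparison}, $L_{\mathcal X}(z_1)\geq L_{N,r_1,u}(z_1)$. In each of the four types, $(N,r_1)$ and $(N,r_2)$ belong to the list in Lemma~\ref{lem:t2-kk-lex}, so $L_{N,r_1,u}$ is strictly decreasing. Hence
\[
 L_{\mathcal X}(z_1)>L_{N,r_1,u}(1/2)=L^{(d_1)}_u,
 \qquad
 L_{\mathcal Y}(z_2)>L^{(d_2)}_v .
\]
Combining these with \eqref{eq:upper-shadow-factor} shows that a product-maximizing pair can exist only if $L^{(d_1)}_uL^{(d_2)}_v<1$. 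Moreover, $L^{(d)}_m\geq\min_{m'\leq M}L^{(d)}_{m'}$ for every $m\leq M$. Therefore any range of $(u,v)$ on which the product of the two tabulated minima is at least $1$ is excluded. Properness gives $u\leq\binom{N}{i-1}-1$ and $v\leq\binom{N}{j-1}-1$.

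\textbf{Step 2: the four types.} I will read off the relevant minima from \eqref{eq:t2-lex-minima}.

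\emph{Type $(7,4,5)$.} Here $N=6$, $(r_1,d_1)=(3,4)$ with $u\leq19$, and $(r_2,d_2)=(4,3)$ with $v\leq14$. Then
\[
 \frac{742}{675}\cdot\frac{190}{207}=\frac{140980}{139725}>1,
\]
so this type is impossible.

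\emph{Type $(6,4,4)$.} Here $N=5$, $d_1=d_2=3$ and $u,v\leq9$. If $u,v\leq8$, then $(130/129)^2>1$, so one of $u,v$, say $u$, equals $9$. If in addition $v\leq7$, then $(46/47)(34/33)=1564/1551>1$. Hence $v\geq8$, and symmetrically, which yields $(u,v)\in\{(8,9),(9,8),(9,9)\}$.

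\emph{Type $(6,3,5)$.} Here $N=5$, $d_1=4$ with $u\leq9$, and $d_2=2$ with $v\leq4$. If $u\leq8$, then $(434/345)(22/27)=9548/9315>1$, so $u=9$. If $v\leq3$, then $(50/41)(6/7)=300/287>1$, so $v=4$.

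\emph{Type $(5,3,4)$.} Here $N=4$, $d_1=3$ with $u\leq5$, and $d_2=2$ with $v\leq3$. If $v\leq2$, then $(94/87)(14/15)=1316/1305>1$, so $v=3$. If $u\leq2$, then $(58/45)(6/7)>1$, so $3\leq u\leq5$.

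This gives exactly \eqref{eq:t2-dense-cases}.

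\textbf{Expected obstacle.} The argument itself is bookkeeping, so the step I expect to be delicate is checking that each fraction comparison is strict in the right direction. The strict monotonicity from Lemma~\ref{lem:t2-kk-lex} gives a strict lower bound for $L_{\mathcal X}(z_1)$ and $L_{\mathcal Y}(z_2)$. Consequently a product of minima equal to $1$ would also be excluded, though none of the comparisons above is an equality. I also need to make sure that each use of $L^{(d)}_m$ is in the range $m\leq\binom{d+r-1}{r}$ where the notation is well defined, which the properness bounds guarantee. Finally, if needed I would double-check each tabulated minimum directly from \eqref{eq:t2-half-formula} using the partial sums of the $w^{(d)}_h$.
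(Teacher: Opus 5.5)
Your proposal is correct and follows the paper's proof essentially verbatim. You use the strict monotonicity from Lemma~\ref{lem:t2-kk-lex} together with \eqref{eq:lex-comparison} to get $L_{\mathcal X}(z_1)L_{\mathcal Y}(z_2)>L^{(d_1)}_uL^{(d_2)}_v$, and then run the same case analysis with the same tabulated minima from \eqref{eq:t2-lex-minima}. Your $140980/139725$ is just the unreduced form of the paper's $28196/27945$, and in type $(6,4,4)$ you handle the two sub-cases in the opposite order to the paper, with the same content.
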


\begin{proof}
 Let $z_h=\frac{p_h}{1-p_h}$.
Since $p_1,p_2<1/3$, we have $z_1,z_2<1/2$. For each type in \eqref{eq:t2-remaining}, both
$(N,i-1)$ and $(N,j-1)$ are among the parameter pairs covered by
Lemma~\ref{lem:t2-kk-lex}. It follows that
\[
\begin{aligned}
 L_{\mathcal X}(z_1)
 &\geq
 L_{N,i-1,u}(z_1)
 >
 L_{N,i-1,u}(1/2)
 =
 L_u^{(s-i+1)},\\
 L_{\mathcal Y}(z_2)
 &\geq
 L_{N,j-1,v}(z_2)
 >
 L_{N,j-1,v}(1/2)
 =
 L_v^{(s-j+1)}.
\end{aligned}
\]

First, let $(s,i,j)=(5,3,4)$. Since  both $\mathcal X$ and $\mathcal Y$ are
proper,
\[
 u\leq\binom{4}{2}-1=5,
 \qquad
 v\leq\binom{4}{3}-1=3.
\]
If $u\leq2$, then \eqref{eq:t2-lex-minima} gives
\[
 L_{\mathcal X}(z_1)L_{\mathcal Y}(z_2)
 >
 \frac{58}{45}\cdot\frac67
 =
 \frac{116}{105}>1.
\]
If $v\leq2$, then
\[
 L_{\mathcal X}(z_1)L_{\mathcal Y}(z_2)
 >
 \frac{94}{87}\cdot\frac{14}{15}
 =
 \frac{1316}{1305}>1.
\]
Both alternatives contradict \eqref{eq:upper-shadow-factor}.
Therefore,
\[
 3\leq u\leq5,
 \qquad
 v=3.
\]

Next, let $(s,i,j)=(6,3,5)$. In this case,
\[
 u\leq\binom{5}{2}-1=9,
 \qquad
 v\leq\binom{5}{4}-1=4.
\]
If $u\leq8$, then
\[
 L_{\mathcal X}(z_1)L_{\mathcal Y}(z_2)
 >
 \frac{434}{345}\cdot\frac{22}{27}
 =
 \frac{9548}{9315}>1.
\]
If $v\leq3$, then
\[
 L_{\mathcal X}(z_1)L_{\mathcal Y}(z_2)
 >
 \frac{50}{41}\cdot\frac67
 =
 \frac{300}{287}>1.
\]
Again, either alternative contradicts
\eqref{eq:upper-shadow-factor}. Hence,
\[
 (u,v)=(9,4).
\]

Now let $(s,i,j)=(6,4,4)$. Properness gives
\[
 u,v\leq\binom{5}{3}-1=9.
\]
If $\min\{u,v\}\leq7$, then
\[
 L_{\mathcal X}(z_1)L_{\mathcal Y}(z_2)
 >
 \frac{34}{33}\cdot\frac{46}{47}
 =
 \frac{1564}{1551}>1,
\]
contrary to \eqref{eq:upper-shadow-factor}. Thus $u,v\geq8$.
If $u,v\leq8$, then
\[
 L_{\mathcal X}(z_1)L_{\mathcal Y}(z_2)
 >
 \left(\frac{130}{129}\right)^2
 =
 \frac{16900}{16641}>1,
\]
which is again impossible. Therefore, $u,v\in\{8,9\}$ and at least
one of them equals $9$. Hence,
\[
 (u,v)\in\{(8,9),(9,8),(9,9)\}.
\]

Finally, let $(s,i,j)=(7,4,5)$. Since  both $\mathcal X$ and $\mathcal Y$ are
proper,
\[
 u\leq\binom{6}{3}-1=19,
 \qquad
 v\leq\binom{6}{4}-1=14.
\]
It follows that
\[
 L_{\mathcal X}(z_1)L_{\mathcal Y}(z_2)
 >
 \frac{742}{675}\cdot\frac{190}{207}
 =
 \frac{28196}{27945}>1,
\]
contradicting \eqref{eq:upper-shadow-factor}. 
\end{proof}

Subsection~\ref{sec:t2-dense} treats the cases in which at least one
of $\mathcal X$ and $\mathcal Y$ is full, as well as the three
configurations listed in \eqref{eq:t2-dense-cases}.

\subsection{The remaining cases}\label{sec:t2-dense}

We retain the notation
\[
 N=s-1,\qquad
 \mathcal X=\mathcal X_i^-,
 \qquad
 \mathcal Y=\mathcal Y_j^-,
 \qquad
 u=|\mathcal X|,
 \qquad
 v=|\mathcal Y|,
\]
and put
\[
 z_1=\frac{p_1}{1-p_1},
 \qquad
 z_2=\frac{p_2}{1-p_2}.
\]
Recall that the fixed pair $(\mathcal A,\mathcal B)$ is shifted,
mutually inclusion-maximal, and product-maximizing. For
$2\leq r\leq s$, define
\[
 \mathcal T_{s,r}
 =
 \{A\subseteq[n]:|A\cap[s]|\geq r\}.
\]

We first consider the case in which $\mathcal X$ or $\mathcal Y$ is
full.

\begin{lemma}\label{lem:full-boundary-threshold}
Suppose that $(s,i,j)$ is one of the four types in
\eqref{eq:t2-remaining} and that at least one of $\mathcal X$ and
$\mathcal Y$ is full. Then
\begin{equation}
 \mathcal A=\mathcal T_{s,i},
 \qquad
 \mathcal B=\mathcal T_{s,j}.
 \label{eq:full-boundary-form}
\end{equation}
Moreover,
\[
 \mu_{p_1}(\mathcal A)\mu_{p_2}(\mathcal B)
 <
 (p_1p_2)^2.
\]
\end{lemma}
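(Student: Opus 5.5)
The plan is to show first that a single full boundary family pins down both families completely, and then to bound the product for the resulting threshold pair directly.

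\textbf{Step 1: the structure.} By the symmetry of the argument, assume that $\mathcal X=\binom{[N]}{i-1}$ is full.
\begin{itemize}
\item[(a)] Every $i$-subset of $[s]$ lies in $\mathcal A$. The $i$-subsets containing $s$ are exactly the members of $\mathcal X_i$. An $i$-subset $K\subseteq[s-1]$ equals $S_{\ell s}$ applied to $(K\setminus\{\ell\})\cup\{s\}$ for any $\ell\in K$, so it lies in $\mathcal A$ by shiftedness. Since $\mathcal A$ is increasing, $\mathcal T_{s,i}\subseteq\mathcal A$.
\item[(b)] Every $B\in\mathcal B$ satisfies $|B\cap[s]|\geq s+2-i=j$. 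Otherwise we can choose an $i$-subset of $[s]$ that contains $[s]\setminus B$ and meets $B\cap[s]$ in at most one element, which violates cross $2$-intersection. Hence $\mathcal B\subseteq\mathcal T_{s,j}$.
\item[(c)] The pair $(\mathcal T_{s,i},\mathcal T_{s,j})$ is cross $2$-intersecting, because $|A\cap B\cap[s]|\geq i+j-s=2$. Mutual inclusion-maximality with $\mathcal A$ fixed therefore forces $\mathcal B=\mathcal T_{s,j}$.
\item[(d)] Now $\mathcal B\supseteq\mathcal T_{s,j}$. The argument of (b), with the roles of the two families exchanged, gives $\mathcal A\subseteq\mathcal T_{s,i}$, and hence $\mathcal A=\mathcal T_{s,i}$.
\end{itemize}
This proves \eqref{eq:full-boundary-form}. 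If instead $\mathcal Y$ is full, the same argument applies with the roles of the two families exchanged.

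\textbf{Step 2: the measure bound.} We have $\mu_{p}(\mathcal T_{s,r})=\tau_{s,r}(p):=\Pr(\mathrm{Bin}(s,p)\geq r)$. It suffices to show two things: the function $g_{s,r}(p)=\tau_{s,r}(p)/p^2$ is strictly increasing on $(0,1/3]$ for each $r\in\{3,4,5\}$ occurring in the four types, and $g_{s,i}(1/3)\,g_{s,j}(1/3)\leq1$. Together these give strict inequality for $p_1,p_2<1/3$.

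For monotonicity, use $\tau_{s,r}'(p)=s\binom{s-1}{r-1}p^{r-1}q^{s-r}=\tfrac{r}{p}\,\binom{s}{r}p^rq^{s-r}$. Then $g_{s,r}'>0$ is equivalent to
\[
 r\binom{s}{r}p^rq^{s-r}>2\,\tau_{s,r}(p).
\]
Consecutive terms of $\tau_{s,r}$ have ratio $\frac{s-k}{k+1}\cdot\frac{p}{q}\leq\frac{s-r}{2(r+1)}$ when $p\leq1/3$. For the finitely many pairs $(s,r)$ involved, namely $(5,3),(5,4),(6,3),(6,4),(6,5),(7,4),(7,5)$, this ratio is small enough that $\tau_{s,r}(p)<\tfrac r2\binom{s}{r}p^rq^{s-r}$. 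The only delicate pair is $(6,3)$, where the ratio bound equals $3/8$. A geometric series then gives $\tau<\tfrac85\cdot(\text{first term})<\tfrac32\cdot(\text{first term})$, which is not yet sufficient. In that case I will instead compare the exact sum of term ratios, $\tfrac38+\tfrac{3}{8}\cdot\tfrac{2}{10}+\cdots<\tfrac12$, which gives $\tau<\tfrac32\cdot(\text{first term})$ as required.

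\textbf{Step 3: the endpoint values.} It remains to evaluate at $p=1/3$. For example, for type $(5,3,4)$ we get $\tau_{5,3}(1/3)=51/243$ and $\tau_{5,4}(1/3)=11/243$, so the product of the two values of $g$ is $81^2\cdot561/243^2=561\cdot81/59049<1$. The other three types are similar exact rational computations.

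I expect the main obstacle to be the monotonicity in Step 2 for the low-threshold pairs such as $(6,3)$, where the crude geometric bound is not enough and the exact term ratios must be used. If that still fails, I would fall back on a direct sign check of the degree-$s$ polynomial $r\binom{s}{r}p^rq^{s-r}-2\tau_{s,r}(p)$ on $(0,1/3]$.
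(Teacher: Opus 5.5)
\textbf{There is a genuine gap: your Step 1 is circular.} In (c) you invoke mutual inclusion-maximality ``with $\mathcal A$ fixed'' to enlarge $\mathcal B$ to $\mathcal T_{s,j}$. That move is legitimate only if $(\mathcal A,\mathcal T_{s,j})$ is cross $2$-intersecting. What you actually checked is that $(\mathcal T_{s,i},\mathcal T_{s,j})$ is cross $2$-intersecting, and at that point you only know $\mathcal T_{s,i}\subseteq\mathcal A$.

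In fact $(\mathcal A,\mathcal T_{s,j})$ is cross $2$-intersecting exactly when $\mathcal A\subseteq\mathcal T_{s,i}$: a set $A$ with $|A\cap[s]|\le i-1$ meets some $j$-subset of $[s]$ in at most one element. But $\mathcal A\subseteq\mathcal T_{s,i}$ is precisely what (d) derives from (c). It also cannot be extracted from (b). The inclusion $\mathcal B\subseteq\mathcal T_{s,j}$, together with maximality of $\mathcal A$, only yields $\mathcal A\supseteq\mathcal T_{s,i}$ again.

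The missing ingredient is a second use of fullness, through minimality of generating sets; this is how the paper argues. Suppose $K\in G(\mathcal A)$ had $|K|<i$. Since $K\subseteq[s]$, you can choose an $i$-set $E\subseteq[s]$ with $K\cup\{s\}\subseteq E$. Fullness of $\mathcal X$ gives $E\in\mathcal X_i\subseteq G(\mathcal A)$. But $K\subsetneq E$ and $K\in\mathcal A$, contradicting the minimality of $E$. Every member of $\mathcal A$ contains a generating set lying in $[s]$, so this gives $\mathcal A\subseteq\mathcal T_{s,i}$, and hence $\mathcal A=\mathcal T_{s,i}$ by your (a). Your (b) and (c) then correctly give $\mathcal B=\mathcal T_{s,j}$, and (d) becomes unnecessary. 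In (b), when $|[s]\setminus B|>i$ you should take the $i$-set inside $[s]\setminus B$ rather than containing it.

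Steps 2 and 3 are fine. Your term-ratio argument proves strict monotonicity of $\mu_p(\mathcal T_{s,r})/p^2$ on $(0,1/3]$ uniformly for all seven pairs, including $(6,3)$, where the tail sum is $219/480<1/2$. The paper instead uses termwise monotonicity of $p^{a-2}(1-p)^{s-a}$ on $(0,(r-2)/(s-2)]$ and treats $(6,3)$ by an explicit quartic; both routes work. There is one typo: the endpoint product for type $(5,3,4)$ is $81\cdot 561/243^2=561/729$, not $81^2\cdot561/243^2$.
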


\begin{proof}
By symmetry, suppose that
\[
 \mathcal X=\binom{[s-1]}{i-1}.
\]
We first show that every generating set of $\mathcal A$ has size at
least $i$. Otherwise, let $K\in G(\mathcal A)$ satisfy $|K|<i$.
Since every generating set is contained in $[s]$, we can choose an
$i$-set $E\subseteq[s]$ such that
$
 K\cup\{s\}\subseteq E.
$
The fullness of $\mathcal X$ implies that
$E\in\mathcal X_i\subseteq G(\mathcal A)$. This contradicts the
minimality of $E$, since $K\subsetneq E$.

Every $i$-subset of $[s]$ containing $s$ belongs to
$\mathcal X_i$ and hence to $\mathcal A$. Now let
$H\in\binom{[s-1]}i$. Choose $a\in H$ and set
$
 E=(H\setminus\{a\})\cup\{s\}.
$
Then $E\in\mathcal X_i$. Since $\mathcal A$ is shifted and $a<s$,
we have $H\in\mathcal A$. Thus, every $i$-subset of $[s]$ belongs to
$\mathcal A$, and increasingness gives
$
 \mathcal T_{s,i}\subseteq\mathcal A.
$
Conversely, every member of $\mathcal A$ contains a generating set
of size at least $i$ lying in $[s]$. Therefore,
$\mathcal A\subseteq\mathcal T_{s,i}$, and hence
$
 \mathcal A=\mathcal T_{s,i}.
$

For a set $F\subseteq[n]$, put $b=|F\cap[s]|$. The smallest possible
intersection of $F\cap[s]$ with an $i$-subset of $[s]$ has size
$
 \max\{0,i+b-s\}.
$
Consequently, $F$ has at least two common elements with every member
of $\mathcal T_{s,i}$ if and only if
$
 b\geq s-i+2=j.
$
Mutual inclusion-maximality  gives
$
 \mathcal B=\mathcal T_{s,j}.
$

For $3\leq r\leq s$, define
\[
 Q_{s,r}(p)
 =
 \frac{\mu_p(\mathcal T_{s,r})}{p^2}
 =
 \sum_{a=r}^{s}
 \binom{s}{a}p^{a-2}(1-p)^{s-a}.
\]
For $r\leq a\leq s$ and $0<p<1$, differentiation gives
\[
 \frac{d}{dp}
 \left(p^{a-2}(1-p)^{s-a}\right)
 =
 p^{a-3}(1-p)^{s-a-1}
 \bigl((a-2)-(s-2)p\bigr).
\]
Since $a\geq r$, the derivative is nonnegative for
$0<p\leq(r-2)/(s-2)$. Hence, $Q_{s,r}(p)$ is nondecreasing on this
interval.
For each $(s,i,j)$ in \eqref{eq:t2-remaining}, we have
\[
 \frac13\leq\frac{r-2}{s-2}
 \qquad\text{for }r\in\{i,j\},
\]
except when $(s,r)=(6,3)$. Thus, all the functions
$Q_{s,i}$ and $Q_{s,j}$ needed below are nondecreasing on
$(0,1/3]$, apart from $Q_{6,3}$.
In that case, for $0\leq p\leq1/3$,
\begin{align*}
 Q_{6,3}(p)
 &=20p-45p^2+36p^3-10p^4,\\
 Q_{6,3}'(p)
 &=20-90p+108p^2-40p^3,\\
 Q_{6,3}''(p)
 &=-90+216p-120p^2<0.
\end{align*}
Hence, $Q_{6,3}'$ is decreasing on this interval, and
$
 Q_{6,3}'(p)
 \geq
 Q_{6,3}'(1/3)
 =
 \frac{14}{27}>0.
$
Thus, all the required functions $Q_{s,r}$ are nondecreasing on
$(0,1/3]$.
Direct calculation gives
\begin{equation}
\begin{aligned}
 Q_{5,3}(1/3)Q_{5,4}(1/3)
 &=\frac{187}{243},
&
 Q_{6,3}(1/3)Q_{6,5}(1/3)
 &=\frac{3029}{6561},\\
 Q_{6,4}(1/3)^2
 &=\frac{5329}{6561},
&
 Q_{7,4}(1/3)Q_{7,5}(1/3)
 &=\frac{4169}{6561}.
\end{aligned}
\label{eq:full-boundary-products}
\end{equation}
Each of these values is smaller than one. Therefore,
\[
 \frac{\mu_{p_1}(\mathcal A)\mu_{p_2}(\mathcal B)}
      {p_1^2p_2^2}
 =
 Q_{s,i}(p_1)Q_{s,j}(p_2)\leq
 Q_{s,i}(1/3)Q_{s,j}(1/3)
 <1,
\]
which proves the result.
\end{proof}

It remains to exclude the three possibilities in
\eqref{eq:t2-dense-cases}.

\begin{lemma}\label{lem:dense-boundary}
Suppose that both $\mathcal X$ and $\mathcal Y$ are proper. Then none
of the three configurations in \eqref{eq:t2-dense-cases} can occur
for the fixed product-maximizing pair.
\end{lemma}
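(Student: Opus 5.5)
In each of the three configurations the boundary families are nearly full. Near-fullness pins down the whole generating structure of $\mathcal A$ and $\mathcal B$ inside $[s]$, so the product can be computed, or bounded, directly. I will take the cases in the order $(5,3,4)$, $(6,3,5)$, $(6,4,4)$. Throughout, $\mathcal A=\mathcal D_s(\mathcal A_0)$ and $\mathcal B=\mathcal D_s(\mathcal B_0)$ with $\mathcal A_0,\mathcal B_0\subseteq2^{[s]}$ increasing and cross $2$-intersecting. The tools are Lemma~\ref{lem:boundary}, shiftedness, and mutual inclusion-maximality.

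\textbf{Case $(5,3,4)$, $v=3$.} Here $\mathcal Y\subseteq\binom{[4]}{3}$ has three members, and shiftedness forces it to be the lex-initial triple-family, i.e.\ all $3$-subsets of $[4]$ except $\{2,3,4\}$. By Lemma~\ref{lem:boundary}, every $E\in\mathcal X_3$ has a partner $F\in\mathcal Y_4$ with $|E\cap F|=2$ and $E\cup F=[5]$, so $E\setminus\{5\}$ is determined as $[4]\setminus(F\setminus\{5\})$ together with one more element. This restricts $\mathcal X$ to sets meeting the complements of members of $\mathcal Y$, and combined with $3\le u\le5$ leaves a short list of candidate $\mathcal X$.

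For each candidate I will reconstruct $\mathcal A_0$ and $\mathcal B_0$. Shifting closes the generators downward in the shift order. Maximality makes $\mathcal B_0$ exactly the family of sets that $2$-intersect every member of $G(\mathcal A)$, and symmetrically for $\mathcal A_0$. Then I compute $\mu_{p_1}(\mathcal A_0)/p_1^2$ and $\mu_{p_2}(\mathcal B_0)/p_2^2$ as polynomials in $p$. As in the proof of Lemma~\ref{lem:full-boundary-threshold}, I check that each factor is nondecreasing on $(0,1/3]$ (or otherwise bound it there), so the product is at most its value at $p_1=p_2=1/3$, and that value should be $<1$. This contradicts product maximality.

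\textbf{Cases $(6,3,5)$ and $(6,4,4)$.} For $(6,3,5)$ with $(u,v)=(9,4)$, the family $\mathcal X$ is $\binom{[5]}{2}$ minus one set, which by shiftedness is $\{4,5\}$; $\mathcal Y$ is $\binom{[5]}{4}$ minus $\{2,3,4,5\}$. For $(6,4,4)$, each of $\mathcal X,\mathcal Y$ is $\binom{[5]}{3}$ minus at most $\{3,4,5\}$. In each case I run the same pipeline:
\begin{enumerate}
\item determine the lower-level generators forced by shiftedness, together with the missing boundary set;
\item obtain the partner family by maximality;
\item write $\mu/p^2$ explicitly and bound it at $1/3$ after a monotonicity check.
\end{enumerate}
When monotonicity of a factor fails, I will fall back on the argument used for $Q_{6,3}$: show the derivative is positive via concavity on $[0,1/3]$.

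The alternative is to refine the boundary inequality~\eqref{eq:boundary-upper}, adding to the $\alpha$ lower bound the measure of lower-level generators that near-fullness forces to exist. I expect the direct reconstruction to be cleaner.

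\textbf{Main obstacle.} The hard part is the structural reconstruction: proving that near-fullness really determines $G(\mathcal A)$ and $G(\mathcal B)$ up to a few possibilities. Generators of size less than $i$ are not excluded a priori, since $i$ is only the minimal size among boundary generators. So I will first show that any smaller generator $K\subseteq[s-1]$ of $\mathcal A$ would 2-intersect too few of the dense $\mathcal Y_j$ sets; this step uses the partner relation $E\cup F=[s]$. Once the families are pinned down, the final numerical checks at $p=1/3$ are routine.
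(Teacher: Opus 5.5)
There is a genuine gap. It sits in the step you defer as the ``main obstacle'', and the structural premises you feed into that step are wrong.

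\textbf{The premises are false.} Shiftedness of $\mathcal A$ and $\mathcal B$ does not make $\mathcal X=\mathcal X_i^-$ or $\mathcal Y=\mathcal Y_j^-$ shifted. Shifting a boundary generator gives a member of the family, but not necessarily an inclusion-minimal one. In type $(5,3,4)$ your description is the opposite of what actually happens. Consider the pair
\[
 \mathcal A=\mathcal S_{[3]}\cup\{A:|A\cap[3]|=2,\ A\cap\{4,5\}\neq\emptyset\},\qquad
 \mathcal B=\mathcal S_{[3]}\cup\{B:|B\cap[3]|=2,\ \{4,5\}\subseteq B\}.
\]
It is shifted and mutually inclusion-maximal, with $s=5$, $i=3$, $j=4$, $u=v=3$, and $\mathcal Y=\binom{[4]}{3}\setminus\{[3]\}$. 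The lex-\emph{first} set is the missing one, because $[3]$ is itself a generator of $\mathcal B$; for instance, $\{1,2,4,5\}$ shifts to the non-minimal set $\{1,2,3,5\}$. With your $\mathcal Y$ (missing $\{2,3,4\}$), no pair satisfying the hypotheses exists. So a case check built on your description never meets the configuration that actually has to be excluded.

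Two further problems:
\begin{itemize}
\item In type $(6,4,4)$, one of $u,v$ may equal $8$, so a boundary family can miss two $3$-sets, not ``at most $\{3,4,5\}$''.
\item Ruling out generators of $\mathcal A$ of size less than $i$ does not pin down the families. The example shows that $\mathcal B$ can have a generator of size less than $j$ avoiding $s$. A full computation of the product would also have to control boundary generators of sizes other than $i$ and $j$.
\end{itemize}

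\textbf{The missing idea} is the paper's witness dichotomy, which never reconstructs $\mathcal A$ and $\mathcal B$. Fix $H\in\binom{[s-1]}{j-1}\setminus\mathcal Y$ and put $F=H\cup\{s\}$.

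If $F\notin\mathcal B$, mutual maximality gives $K\in G(\mathcal A)$ with $|K\cap F|\le1$. Generators containing $s$ have size at least $i$, and $|[s-1]\setminus H|=i-2$, so this forces $s\notin K$ and hence $|K|\le i-1$. Such a $K$ cannot $2$-intersect every $Y\cup\{s\}$ with $Y\in\mathcal Y$, because $\mathcal Y$ is too large.

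If $F\in\mathcal B$, choose $G\in G(\mathcal B)$ with $G\subseteq F$.
\begin{itemize}
\item When $s\in G$: in type $(6,4,4)$, the bound $|G|\ge i$ together with $G\neq F$ is already impossible. In the other two types it forces $G=G_0\cup\{s\}$ with $G_0$ contained in no member of $\mathcal Y$, contradicting the size of $\mathcal Y$.
\item When $G\subseteq H$: every $X\in\mathcal X$ must satisfy $|X\cap G|\ge2$. This contradicts the value of $u$ in types $(6,3,5)$ and $(6,4,4)$, and forces $\mathcal X=\binom H2$ in type $(5,3,4)$.
\end{itemize}

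That last case is closed through Lemma~\ref{lem:upper-shadow-factor}, not by computing measures. From $P_{\mathcal X}(z)=3+4z+z^2$ and $P_{\mathcal Y}(z)=3+z$ one gets $L_{\mathcal X}(1/2)L_{\mathcal Y}(1/2)=52/49>1$. Both functions are decreasing, so this contradicts \eqref{eq:upper-shadow-factor}.

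Your direct-computation endgame could replace that final step. Once $\mathcal X=\binom H2$ is known, shiftedness forces $H=[3]$, and the pair is exactly the one displayed above, with normalized product $\tfrac{13}{9}\cdot\tfrac59=\tfrac{65}{81}$ at $p_1=p_2=1/3$. But that route only becomes available after the dichotomy has been established.
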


\begin{proof}
We first record a consequence of mutual inclusion-maximality. If
$F\notin\mathcal B$, then there is some $A_0\in\mathcal A$ such that
$
 |A_0\cap F|\leq1.
$
Choosing $K\in G(\mathcal A)$ with $K\subseteq A_0$, we obtain
\begin{equation}
 |K\cap F|\leq1.
 \label{eq:t2-generator-witness}
\end{equation}
 Recall
also that, by the choice of $i$, every generating set containing $s$
has size at least $i$.

\textbf{First, suppose that}
\[
 (s,i,j)=(5,3,4),
 \qquad
 3\leq u\leq5,
 \qquad
 v=3.
\]
The family $\mathcal Y$ contains three of the four $3$-subsets of
$[4]$. Let $H$ be the unique member of
$\binom{[4]}3\setminus\mathcal Y$, and put
\[
 F=H\cup\{5\}.
\]

Suppose first that $F\in\mathcal B$, and choose
$G\in G(\mathcal B)$ with $G\subseteq F$. If $5\notin G$, then
$G\subseteq H$. For every $X\in\mathcal X$, the sets
$X\cup\{5\}\in\mathcal A$ and $G\in\mathcal B$ must have at least
two common elements. Hence, $X\subseteq G\subseteq H$. Since
$u\geq3$ and $H$ contains exactly three $2$-subsets, it follows that
\begin{equation}
 u=3,
 \qquad
 \mathcal X=\binom H2,
 \qquad
 G=H.
 \label{eq:dense-first-structure}
\end{equation}
Now suppose that $5\in G$. 
Since $H\notin\mathcal Y=\mathcal Y_4^-$, the set
$F=H\cup\{5\}$ is not a generating set of $\mathcal B$.
Since $|G|\geq3$ and
$F\notin G(\mathcal B)$, we must have
$
 G=G_0\cup\{5\}
$
for some $G_0\in\binom H2$.
 For every $Y\in\mathcal Y$, the set
$Y\cup\{5\}$ belongs to $G(\mathcal B)$. If $G_0\subseteq Y$, then
\[
 G=G_0\cup\{5\}
 \subsetneq
 Y\cup\{5\},
\]
which is impossible because both sets belong to $G(\mathcal B)$ and
the members of $G(\mathcal B)$ are inclusion-minimal in
$\mathcal B$. Hence, every member of $\mathcal Y$ must avoid
containing $G_0$.
There are four $3$-subsets of $[4]$. Exactly two of them contain the
fixed $2$-set $G_0$. Therefore, only two $3$-subsets of $[4]$ do
not contain $G_0$. This contradicts $|\mathcal Y|=3$. Thus the case
$5\in G$ is impossible, and consequently, if $F\in\mathcal B$, then
\eqref{eq:dense-first-structure} holds.

We next show that $F\notin\mathcal B$ is impossible. Suppose that
$F\notin\mathcal B$. Let
$K\in G(\mathcal A)$ satisfy \eqref{eq:t2-generator-witness}. If
$5\in K$, then $|K|\geq3$, so $K\setminus\{5\}$ contains at least two
elements of $[4]$ and therefore meets the $3$-set $H$. Together with
$5\in K\cap F$, this gives $|K\cap F|\geq2$, a contradiction.
Thus, $5\notin K$ and $K\subseteq[4]$. If $|K|\geq3$, then
$|K\cap H|\geq2$, again contradicting
\eqref{eq:t2-generator-witness}. If $|K|\leq2$, cross
$2$-intersection with the three sets
$Y\cup\{5\}$, $Y\in\mathcal Y$, forces $|K|=2$ and
\[
 K\subseteq\bigcap_{Y\in\mathcal Y}Y.
\]
The intersection on the right consists of the unique element of
$[4]\setminus H$, so it cannot contain a $2$-set. This is also a
contradiction. 

Hence, $F\in\mathcal B$, and
\eqref{eq:dense-first-structure} must hold.
In this case,
\[
 \mathcal X=\binom H2,
 \qquad
 \mathcal Y=\binom{[4]}3\setminus\{H\}.
\]
Recall that the coefficient of $z^a$ in $P_{\mathcal H}(z)$ is the
number of $(r+a)$-subsets of the ground set that contain a member of
$\mathcal H$, where $\mathcal H$ is $r$-uniform.
Since $s-1=4, |H|=3$ and $\mathcal X=\binom{H}{2}$,
\[
 P_{\mathcal X}(z)=3+4z+z^2.
\]
For $\mathcal Y=\binom{[4]}3\setminus\{H\}$, we have
\[
 P_{\mathcal Y}(z)=3+z.
\]
Using \eqref{eq:upper-shadow-functions}, we obtain
\[
 L_{\mathcal X}(z)
 =
 \frac{z^2+5z+7}{z^3+5z^2+7z+3},
 \qquad
 L_{\mathcal Y}(z)
 =
 \frac{z+4}{z^2+4z+3}.
\]
Moreover, for $z>0$,
\begin{align*}
 L_{\mathcal X}'(z)
 &=
 -\frac{z^4+10z^3+39z^2+64z+34}
        {(z^3+5z^2+7z+3)^2}<0,\\
 L_{\mathcal Y}'(z)
 &=
 -\frac{z^2+8z+13}
        {(z^2+4z+3)^2}<0.
\end{align*}
Since $z_1,z_2<1/2$, it follows that
\begin{equation}
 L_{\mathcal X}(z_1)L_{\mathcal Y}(z_2)
 >
 L_{\mathcal X}(1/2)L_{\mathcal Y}(1/2)=
 \frac{26}{21}\cdot\frac67
 =
 \frac{52}{49}>1, \label{eq:dense-first-product}
\end{equation}
contradicting \eqref{eq:upper-shadow-factor}.

\textbf{Next, suppose that}
\[
 (s,i,j)=(6,3,5),
 \qquad
 (u,v)=(9,4).
\]
Let $H$ be the unique member of
$\binom{[5]}4\setminus\mathcal Y$, and put
\[
 F=H\cup\{6\}.
\]
Suppose that $F\in\mathcal B$, and choose
$G\in G(\mathcal B)$ with $G\subseteq F$. If $6\notin G$, then
$G\subseteq H$. Cross $2$-intersection implies that
$X\subseteq G$ for every $X\in\mathcal X$. This is impossible,
because $G\subseteq H$ contains at most
$\binom42=6$ different $2$-subsets, whereas $u=9$.
Suppose that $6\in G$. Since $H\notin\mathcal Y=\mathcal Y_5^-$, the set
$F=H\cup\{6\}$ does not belong to $G(\mathcal B)$. 
Now suppose that $G\in G(\mathcal B)$ satisfies
$6\in G\subseteq F$. By the choice of $i=3$, every generating set
containing $6$ has size at least $3$, so $|G|\geq3$. On the other
hand, $G\neq F$ because $F\notin G(\mathcal B)$. Since $|F|=5$, it
follows that $3\leq|G|\leq4$. Therefore,
\[
 G=G_0\cup\{6\},
 \qquad
 G_0\subseteq H,
 \qquad
 |G_0|\in\{2,3\}.
\]
Cross $2$-intersection implies that every member of $\mathcal X$ must meet $G_0$. If $|G_0|=2$, then
only
$
 \binom52-\binom32=7
$
of the $2$-subsets of $[5]$ meet $G_0$, contradicting $u=9$. It remains to consider $|G_0|=3$. For every $Y\in\mathcal Y$, the
set $Y\cup\{6\}$ belongs to $G(\mathcal B)$. If $G_0\subseteq Y$,
then
\[
 G=G_0\cup\{6\}
 \subsetneq
 Y\cup\{6\},
\]
which is impossible because both sets are inclusion-minimal members
of $\mathcal B$. Hence, no member of $\mathcal Y$ contains $G_0$.
There are five $4$-subsets of $[5]$. Exactly two of them contain the
fixed $3$-set $G_0$.  Therefore, only three $4$-subsets of $[5]$ do
not contain $G_0$. This contradicts $|\mathcal Y|=v=4$.

Therefore, we must have $F\notin\mathcal B$.
Choose $K\in G(\mathcal A)$ satisfying
\eqref{eq:t2-generator-witness}. If $6\in K$, then $|K|\geq3$, so
$K\setminus\{6\}$ contains at least two elements of $[5]$ and must
meet the $4$-set $H$. This gives $|K\cap F|\geq2$, a contradiction.
Hence, $6\notin K$. Since $[5]\setminus H$ has one element,
$|K\cap H|\leq1$ implies that $|K|\leq2$. Cross
$2$-intersection with every $Y\cup\{6\}$, $Y\in\mathcal Y$, then
forces $|K|=2$ and
\[
 K\subseteq\bigcap_{Y\in\mathcal Y}Y.
\]
The intersection of the four members of $\mathcal Y$ is the unique
element of $[5]\setminus H$, which cannot contain $K$. This
contradiction excludes the second configuration.

\textbf{Finally, suppose that}
\[
 (s,i,j)=(6,4,4),
 \qquad
 u,v\geq8,
 \qquad
 \max\{u,v\}=9.
\]
Since $\mathcal Y$ is proper, choose
$H\in\binom{[5]}3\setminus\mathcal Y$ and put
\[
 F=H\cup\{6\}.
\]
Suppose that $F\in\mathcal B$, and choose
$G\in G(\mathcal B)$ with $G\subseteq F$. If $6\in G$, then the
choice of $i=4$ gives $|G|\geq4$. Since $|F|=4$, we must have
$G=F$. It would follow that
\[
 H=G\setminus\{6\}\in\mathcal Y,
\]
contrary to the choice of $H$. Hence $6\notin G$. Since
$G\subseteq F=H\cup\{6\}$, we conclude that $G\subseteq H$.
For every $X\in\mathcal X$, cross $2$-intersection gives
\[
 |X\cap G|
 =
 |(X\cup\{6\})\cap G|
 \geq2.
\]
Since $G\subseteq H$, this implies $|X\cap H|\geq2$. The number of
$3$-subsets of $[5]$ having at least two elements in common with the
fixed $3$-set $H$ is
\[
 \binom32\binom21+\binom33=7.
\]
Thus, $|\mathcal X|\leq7$, contradicting $u\geq8$. 

Therefore, $F\notin\mathcal B$.
By \eqref{eq:t2-generator-witness}, there is
$K\in G(\mathcal A)$ such that
$
 |K\cap F|\leq1.
$
If $6\in K$, then $|K|\geq4$, so $K\setminus\{6\}$ contains at
least three elements of $[5]$. Since $[5]\setminus H$ has only two
elements, we have
\[
 (K\setminus\{6\})\cap H\neq\emptyset.
\]
Together with $6\in K\cap F$, this gives $|K\cap F|\geq2$, a
contradiction. Hence $6\notin K$.
Now $K\subseteq[5]$ and
\[
 |K\cap H|=|K\cap F|\leq1.
\]
Since $|[5]\setminus H|=2$, it follows that $|K|\leq3$. For every
$Y\in\mathcal Y$, cross $2$-intersection between
$K\in\mathcal A$ and $Y\cup\{6\}\in\mathcal B$ gives
\[
 |K\cap Y|\geq2.
\]
For $|K|\leq3$, at most
\[
 \binom32\binom21+\binom33=7
\]
members of $\binom{[5]}3$ can meet $K$ in at least two elements.
This contradicts $|\mathcal Y|=v\geq8$. 
Thus, the third configuration is excluded.
\end{proof}

\subsection{Completion of the proof for
\texorpdfstring{$t=2$}{t=2}}\label{sec:t2-completion}

We now combine the preceding results to prove the product inequality
and characterize the equality case for $t=2$.

\begin{proof}[Proof of Theorem~\ref{thm:product-measure} for $t=2$
in the open range]
Fix $0<p_1,p_2<1/3$, and let $(\mathcal A,\mathcal B)$ be a shifted
product-maximizing pair as in Subsection~\ref{sec:boundary-tools}.
A common $2$-star shows that
\[
 \mu_{p_1}(\mathcal A)\mu_{p_2}(\mathcal B)
 \geq(p_1p_2)^2.
\]
Every generating set has size at least $2$. If $s=2$, then
$
 G(\mathcal A)=G(\mathcal B)=\{[2]\},
$
and hence
$\mathcal A=\mathcal B=\mathcal S_{[2]}$.

Assume that $s>2$, and choose
$
 i=\min\{r:\mathcal X_r\cup\mathcal Y_r\neq\emptyset\}.
$
After interchanging the two families if necessary, assume that
$\mathcal X_i\neq\emptyset$, and put $j=s+2-i$.
Lemma~\ref{lem:boundary} gives $\mathcal Y_j\neq\emptyset$, and the
choice of $i$ implies that $i\leq j$. If $i=2$, then
Lemma~\ref{lem:size-t} gives a product strictly smaller than
$(p_1p_2)^2$, a contradiction. Therefore,
\[
 3\leq i\leq j,
 \qquad
 i+j=s+2,
\]
so $s\geq4$.
If $s=4$, Lemma~\ref{lem:excess-two} gives
\[
 \mu_{p_1}(\mathcal A)\mu_{p_2}(\mathcal B)
 \leq(p_1p_2)^2.
\]
Product maximality forces equality, so $\mathcal A$ and $\mathcal B$
are the same $2$-star. Since both families are shifted, this star is
$\mathcal S_{[2]}$, contradicting the definition of $s$.

Thus, $s\geq5$. The arguments leading to
\eqref{eq:t2-remaining} show that the only possible types are
\[
 (s,i,j)=(5,3,4),\ (6,3,5),\ (6,4,4),\ (7,4,5).
\]
For any of these types, put
$
 N=s-1,
 \mathcal X=\mathcal X_i^-,
 \mathcal Y=\mathcal Y_j^-.
$
If at least one of $\mathcal X$ and $\mathcal Y$ is full,
Lemma~\ref{lem:full-boundary-threshold} gives a product strictly
smaller than $(p_1p_2)^2$. If both are proper,
Lemma~\ref{lem:t2-proper-reduction} either excludes the type
$(7,4,5)$ or reduces the parameters to one of the three
configurations in \eqref{eq:t2-dense-cases}. All three are excluded
by Lemma~\ref{lem:dense-boundary}. Hence, no product-maximizing pair
has $s>2$.
Consequently, every shifted product-maximizing pair is
$(\mathcal S_{[2]},\mathcal S_{[2]})$, and the required inequality
follows.

Now let $\mathcal F_1,\mathcal F_2\subseteq2^{[n]}$ attain equality.
The pair is product-maximizing, so both families are increasing.
Simultaneous shifting produces a shifted equality pair
$
 (\mathcal F_1',\mathcal F_2')
 =
 (\mathcal S_{[2]},\mathcal S_{[2]}).
$
Reversing the shifts and applying
Lemma~\ref{lem:inverse-shift-star} gives
$
 \mathcal F_i=\mathcal S_{T_i}
 $
for some $T_i\in\binom{[n]}2$ and $ i=1,2.$
Cross $2$-intersection forces $T_1=T_2$. Conversely, every common
$2$-star attains equality.
\end{proof}

\section{Dimension-free stability}\label{sec:stability}

\subsection{Quantitative comparison}

The proof of Theorem~\ref{thm:stability} is based on a quantitative
comparison between biased measures of increasing families. We begin
with several results on biased measures and stability that will be used to establish this comparison.

For a finite set $U$, a family $\mathcal G\subseteq2^U$, and
$0<r<1$, write
\[
 \mu_r^U(\mathcal G)
 =
 \sum_{A\in\mathcal G}
 r^{|A|}(1-r)^{|U|-|A|}.
\]
When $U=[n]$, we omit the superscript.
For $\mathcal F\subseteq2^{[n]}$ and $j\in[n]$, define
\[
 \mathcal F(\bar j)
 =
 \{A\subseteq[n]\setminus\{j\}:A\in\mathcal F\},
 \qquad
 \mathcal F(j)
 =
 \{A\subseteq[n]\setminus\{j\}:A\cup\{j\}\in\mathcal F\}.
\]
 For $j\in[n]$, define the \textit{$j$-influential set} of
$\mathcal F$ by
\[
 \mathcal I_j(\mathcal F)
 =
 \left\{
 A\subseteq[n]:
 \left|
 \{A,A\mathbin\triangle\{j\}\}\cap\mathcal F
 \right|=1
 \right\}.
\]
For $0<r<1$, the \textit{$j$-influence} of $\mathcal F$ under
$\mu_r$ is defined by
\[
 I_j^r(\mathcal F)
 =
 \mu_r\bigl(\mathcal I_j(\mathcal F)\bigr),
\]
and its total influence is
\[
 I_r(\mathcal F)
 =
 \sum_{j=1}^n I_j^r(\mathcal F).
\]
This agrees with the definition of biased influence in
\cite{E24}.
If $\mathcal F$ is increasing, then
$\mathcal F(\bar j)\subseteq\mathcal F(j)$. Moreover, for every
$B\subseteq[n]\setminus\{j\}$,
\[
 \{B,B\cup\{j\}\}\subseteq\mathcal I_j(\mathcal F)
 \quad\text{if and only if}\quad
 B\in\mathcal F(j)\setminus\mathcal F(\bar j).
\]
Consequently,
\begin{align*}
 I_j^r(\mathcal F)
 &=
 (1-r)
 \mu_r^{[n]\setminus\{j\}}
 \bigl(\mathcal F(j)\setminus\mathcal F(\bar j)\bigr)+
 r
 \mu_r^{[n]\setminus\{j\}}
 \bigl(\mathcal F(j)\setminus\mathcal F(\bar j)\bigr)\\
 &=
 \mu_r^{[n]\setminus\{j\}}
 \bigl(\mathcal F(j)\setminus\mathcal F(\bar j)\bigr).
\end{align*}

With this notation, the Margulis--Russo formula
\cite[Lemma 2.2]{E24} reads as follows.

\begin{lemma}[Margulis--Russo formula]
\label{lem:stability-margulis-russo}
Let $\mathcal F\subseteq2^{[n]}$ be increasing. Then
\[
 \frac{\mathrm d}{\mathrm dr}\mu_r(\mathcal F)
 =
 I_r(\mathcal F),
 \qquad 0<r<1.
\]
\end{lemma}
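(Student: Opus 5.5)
We prove the Margulis--Russo formula by expanding $\mu_r(\mathcal F)$ as a polynomial in $r$ and differentiating it one coordinate at a time, using the decomposition of $\mathcal F$ along each coordinate $j$.

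\textbf{Step 1: isolate one coordinate.} For $j\in[n]$, split $\mathcal F$ according to whether $j$ belongs to the set. This gives
\[
 \mu_r(\mathcal F)
 =
 r\,\mu_r^{[n]\setminus\{j\}}\bigl(\mathcal F(j)\bigr)
 +(1-r)\,\mu_r^{[n]\setminus\{j\}}\bigl(\mathcal F(\bar j)\bigr).
\]

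\textbf{Step 2: introduce independent biases.} Consider the multivariate measure in which coordinate $\ell$ is included with probability $r_\ell$:
\[
 \mu_{\boldsymbol r}(\mathcal F)
 =
 \sum_{A\in\mathcal F}\prod_{\ell\in A}r_\ell\prod_{\ell\notin A}(1-r_\ell).
\]
This is a polynomial in $r_1,\ldots,r_n$ and is affine in each variable separately. By the identity of Step 1, applied with coordinate-dependent biases,
\[
 \frac{\partial}{\partial r_j}\mu_{\boldsymbol r}(\mathcal F)
 =
 \mu_{\boldsymbol r}^{[n]\setminus\{j\}}\bigl(\mathcal F(j)\bigr)
 -\mu_{\boldsymbol r}^{[n]\setminus\{j\}}\bigl(\mathcal F(\bar j)\bigr).
\]

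\textbf{Step 3: use monotonicity.} Since $\mathcal F$ is increasing, $\mathcal F(\bar j)\subseteq\mathcal F(j)$. Hence the difference above equals
\[
 \mu_{\boldsymbol r}^{[n]\setminus\{j\}}\bigl(\mathcal F(j)\setminus\mathcal F(\bar j)\bigr).
\]
At $r_1=\cdots=r_n=r$, this is exactly $I_j^r(\mathcal F)$, by the identity derived just before the lemma.

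\textbf{Step 4: chain rule.} The univariate measure is the restriction of the multivariate one to the diagonal, $\mu_r(\mathcal F)=\mu_{(r,\ldots,r)}(\mathcal F)$. The multivariate measure is a polynomial, so the chain rule applies, and
\[
 \frac{\mathrm d}{\mathrm dr}\mu_r(\mathcal F)
 =
 \sum_{j=1}^n\frac{\partial\mu_{\boldsymbol r}(\mathcal F)}{\partial r_j}\bigg|_{r_\ell=r}
 =
 \sum_{j=1}^nI_j^r(\mathcal F)
 =
 I_r(\mathcal F).
\]

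The only point that needs care is the bookkeeping in Steps 2 and 3. One must confirm that each partial derivative is taken of an affine function of $r_j$, and that the restricted measures $\mu^{[n]\setminus\{j\}}$ are the ones appearing in the definition of $I_j^r$. Both follow directly from the definitions, so no real obstacle is expected. Alternatively, one can cite \cite[Lemma 2.2]{E24} directly, since the definitions of influence agree.
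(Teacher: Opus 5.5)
Your argument is correct. The paper does not prove this lemma itself; it quotes \cite[Lemma~2.2]{E24}. Its only contribution is the computation placed just before the statement. That computation shows that, for increasing $\mathcal F$, the influence $I_j^r(\mathcal F)=\mu_r(\mathcal I_j(\mathcal F))$ equals $\mu_r^{[n]\setminus\{j\}}\bigl(\mathcal F(j)\setminus\mathcal F(\bar j)\bigr)$, so the cited formula matches the paper's definition.

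Your proposal supplies the standard argument behind that citation. You pass to the multi-affine extension $\mu_{\boldsymbol r}$, differentiate in each $r_j$ using the decomposition along coordinate $j$, and sum over $j$ by the chain rule on the diagonal. Step~3 uses exactly the paper's identity.

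What your route buys is self-containedness at the cost of a few lines. It also makes visible that monotonicity enters only in Step~3. For an arbitrary family, Steps~1, 2 and~4 still give
$\frac{\mathrm d}{\mathrm dr}\mu_r(\mathcal F)=\sum_{j}\bigl(\mu_r^{[n]\setminus\{j\}}(\mathcal F(j))-\mu_r^{[n]\setminus\{j\}}(\mathcal F(\bar j))\bigr)$.
The influence, however, is then $\sum_j\mu_r^{[n]\setminus\{j\}}\bigl(\mathcal F(j)\mathbin\triangle\mathcal F(\bar j)\bigr)$, and the two need not agree.
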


We shall also use the  biased edge isoperimetric inequality
\cite[Theorem~2.1]{E24}.

\begin{lemma}
\label{lem:stability-isoperimetric}
Let $\mathcal F\subseteq2^{[n]}$ be increasing, and suppose that
$0<r<1$. Then
\[
 rI_r(\mathcal F)
 \geq
 \mu_r(\mathcal F)\log_r\mu_r(\mathcal F).
\]
\end{lemma}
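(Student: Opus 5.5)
The statement immediately above is the biased edge isoperimetric inequality, Lemma~\ref{lem:stability-isoperimetric}: for increasing $\mathcal F$ and $0<r<1$, $rI_r(\mathcal F)\geq\mu_r(\mathcal F)\log_r\mu_r(\mathcal F)$. I would prove it by induction on $n$, splitting on the last coordinate. Consider the function $\phi(x)=x\log_r x$ on $[0,1]$, with $\phi(0)=0$. Since $\log r<0$, $\phi$ is concave, nonnegative, and vanishes at $0$ and $1$.

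For $n=0$, the family is either $\emptyset$ or $\{\emptyset\}$. The influence is then $0$ and $\phi(\mu)=0$, so the inequality holds.

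For the inductive step, write $a=\mu_r(\mathcal F(\bar n))$ and $b=\mu_r(\mathcal F(n))$, both computed on $[n-1]$. Since $\mathcal F$ is increasing, $a\leq b$, and
\[
\mu=(1-r)a+rb,\qquad I_n^r(\mathcal F)\geq b-a .
\]
For $j<n$, the $j$-influential sets split according to whether they contain $n$. This gives
\[
I_j^r(\mathcal F)=(1-r)I_j^r(\mathcal F(\bar n))+rI_j^r(\mathcal F(n)).
\]
Both restricted families are increasing, so the induction hypothesis applies to each. It yields
\[
rI_r(\mathcal F)\geq(1-r)\phi(a)+r\phi(b)+r(b-a).
\]

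It therefore suffices to prove the two-point inequality
\[
(1-r)\phi(a)+r\phi(b)+r(b-a)\geq\phi\bigl((1-r)a+rb\bigr)\qquad (0\leq a\leq b\leq1).
\]
This is the main obstacle. Concavity of $\phi$ alone points the wrong way, so the term $r(b-a)$ must compensate.

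I would first reduce to a single variable. Both sides are homogeneous up to the scaling $\phi(\lambda x)=\lambda\phi(x)+\lambda x\log_r\lambda$. This suggests normalizing $b=1$, or alternatively fixing the ratio $a/b$ and checking monotonicity in $b$.

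With $b=1$, $\phi(1)=0$, so the inequality becomes
\[
g(a)=(1-r)a\log_r a+r(1-a)-\mu\log_r\mu\geq0,\qquad \mu=(1-r)a+r .
\]
At $a=1$ we have $g(1)=0$. I would show $g$ is decreasing on $[0,1]$ by computing $g'$, which involves $\log_r a-\log_r\mu$ together with constants, and then using $a\leq\mu$.

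The endpoint case $a=0$ is the sharp one, where equality holds. There the inequality reads $r\geq r\log_r r=r$, which is consistent with subcubes being extremal.

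If the homogeneity reduction to $b=1$ fails, I would instead set $b$ free. I would then verify that the partial derivative in $b$ of the difference is nonnegative, reducing again to the case $a=0$ or $a=b$, both of which are trivial.
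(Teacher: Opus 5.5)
\emph{Note:} the paper does not prove this lemma; it cites \cite[Theorem~2.1]{E24}. There is a gap in your proof of the two-point inequality, which your unexecuted fallback would close.

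\textbf{What is sound.} The induction on $n$ is correct. So are the identities $I_j^r(\mathcal F)=(1-r)I_j^r(\mathcal F(\bar n))+rI_j^r(\mathcal F(n))$ for $j<n$ and $I_n^r(\mathcal F)=b-a$, and the reduction to the two-point inequality. This is the standard proof of the biased edge-isoperimetric inequality. The normalization $b=1$ is also exact, not merely suggestive. Write $D(a,b)$ for the left side minus the right side of the two-point inequality, with $\mu=(1-r)a+rb$. The extra terms $\lambda x\log_r\lambda$ cancel because $(1-r)a+rb-\mu=0$, so $D(\lambda a,\lambda b)=\lambda D(a,b)$.

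\textbf{The gap.} The failing step is the one you flagged as the main obstacle: $g$ is not decreasing on $[0,1]$. You computed $g(0)=0$ and $g(1)=0$ yourself, so monotonicity would force $g\equiv 0$. But for $r=1/2$ one gets $g(1/2)=\tfrac12-\tfrac34\log_2\tfrac43\approx 0.19>0$. Concretely, $g'(a)=(1-r)(\log_r a-\log_r\mu)-r$ with $\mu=(1-r)a+r$. The fact $a\le\mu$ gives $\log_r a\ge\log_r\mu$, so it makes the first term nonnegative, which is the wrong direction. Indeed $g'(a)\to+\infty$ as $a\to 0^+$, so $g$ increases near $0$.

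\textbf{Repair via concavity.} What is true is that $g$ is concave: $g''(a)=-\frac{r(1-r)}{a\mu\ln(1/r)}<0$ on $(0,1]$. A continuous concave function on $[0,1]$ that vanishes at both endpoints is nonnegative.

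\textbf{Repair via your fallback.} This also works and is even shorter, but you must supply the computation. We have $\partial_b D(a,b)=r\bigl(1-\log_r(\mu/b)\bigr)\ge 0$ because $rb\le\mu\le b$. Together with $D(a,a)=0$, this gives $D(a,b)\ge 0$ for $a\le b\le 1$. The reduction is to $b=a$; the case $a=0$ plays no role.

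With either repair, your proof is complete.
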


The preceding two lemmas yield the following comparison between
different biases; see also \cite[Lemma~3.2]{ChangLiuLiu}.

\begin{lemma}
\label{lem:two-bias-comparison}
Let $\mathcal F\subseteq2^{[n]}$ be increasing, let
$0<p\leq q<1$, and put
$
 \theta=\frac{\log q}{\log p}.
$
Then
\[
 \mu_q(\mathcal F)
 \geq
 \mu_p(\mathcal F)^\theta.
\]
\end{lemma}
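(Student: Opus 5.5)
The plan is to combine the Margulis--Russo formula (Lemma~\ref{lem:stability-margulis-russo}) with the biased edge isoperimetric inequality (Lemma~\ref{lem:stability-isoperimetric}) into a differential inequality for $r\mapsto\mu_r(\mathcal F)$, and then integrate it from $p$ to $q$. The trivial cases come first. If $\mathcal F=\emptyset$, then both sides vanish, since $\theta>0$. If $\emptyset\in\mathcal F$, then increasingness gives $\mathcal F=2^{[n]}$, and both sides equal $1$. So I will assume $\mathcal F$ is nonempty and $\emptyset\notin\mathcal F$. Then $0<\mu_r(\mathcal F)<1$ for every $0<r<1$, and $\mu_r(\mathcal F)$ is a polynomial in $r$, hence smooth. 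I will also assume $p<q$, since $p=q$ gives $\theta=1$ and equality.

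Put $g(r)=\log\mu_r(\mathcal F)$, which is negative. Lemma~\ref{lem:stability-margulis-russo} gives $\frac{\mathrm d}{\mathrm dr}\mu_r(\mathcal F)=I_r(\mathcal F)$. Lemma~\ref{lem:stability-isoperimetric} gives $rI_r(\mathcal F)\geq\mu_r(\mathcal F)\log\mu_r(\mathcal F)/\log r$. Dividing by $r\mu_r(\mathcal F)>0$ yields
\[
 g'(r)\geq\frac{g(r)}{r\log r}.
\]
Both $g(r)$ and $\log r$ are negative, so the right-hand side is positive.

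Next consider $h(r)=g(r)/\log r$, which is positive. Differentiating gives
\[
 h'(r)=\frac{g'(r)}{\log r}-\frac{g(r)}{r(\log r)^2}.
\]
Since $\log r<0$, dividing the differential inequality by $\log r$ reverses it: $g'(r)/\log r\leq g(r)/(r(\log r)^2)$. Hence $h'(r)\leq0$, so $h$ is nonincreasing on $(0,1)$. Therefore $h(q)\leq h(p)$, that is,
\[
 \frac{\log\mu_q(\mathcal F)}{\log q}\leq\frac{\log\mu_p(\mathcal F)}{\log p}.
\]
Multiplying by $\log q<0$ flips the inequality and gives $\log\mu_q(\mathcal F)\geq\theta\log\mu_p(\mathcal F)$, which exponentiates to $\mu_q(\mathcal F)\geq\mu_p(\mathcal F)^\theta$.

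The main obstacle is keeping track of signs: both $\log r$ and $\log\mu_r(\mathcal F)$ are negative, and each division by one of them flips an inequality. Beyond that, I must make sure the degenerate families are excluded before dividing, which is exactly what the preliminary reduction does.
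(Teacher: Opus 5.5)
Your proposal is correct and follows essentially the same route as the paper. The paper also combines the Margulis--Russo formula with the biased edge isoperimetric inequality to show that $h(r)=\log_r\mu_r(\mathcal F)$ is nonincreasing, and then multiplies $h(q)\leq h(p)$ by $\log q<0$. Your separate treatment of $p=q$ is harmless but not needed.
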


\begin{proof}
The assertion is immediate if
$\mathcal F=\emptyset$ or $\mathcal F=2^{[n]}$. Otherwise,
$0<\mu_r(\mathcal F)<1$ for every $0<r<1$. Define
\[
 h(r)=\log_r\mu_r(\mathcal F).
\]
Differentiating $h(r)=\log\mu_r(\mathcal F)/\log r$ and applying
Lemmas~\ref{lem:stability-margulis-russo} and
\ref{lem:stability-isoperimetric}, we obtain
\begin{align*}
 h'(r)
 &=
 \frac{
 I_r(\mathcal F)\log r/\mu_r(\mathcal F)
 -
 \log\mu_r(\mathcal F)/r
 }{(\log r)^2}\\
 &=
 \frac{
 rI_r(\mathcal F)/\mu_r(\mathcal F)-h(r)
 }{r\log r}
 \leq0,
\end{align*}
where the last inequality uses $r\log r<0$.
Thus, $h(q)\leq h(p)$. Since $\log q<0$,
\[
 \log\mu_q(\mathcal F)
 =
 h(q)\log q
 \geq
 h(p)\log q
 =
 \theta\log\mu_p(\mathcal F),
\]
which proves the result.
\end{proof}

The biased edge-isoperimetric stability theorem of Ellis, Keller and
Lifshitz \cite[Theorem~1.8]{EllisKellerLifshitz} is stated in terms of Boolean functions. 
Under the identification of $A\subseteq[n]$ with its characteristic
vector, let $f=\mathbf1_{\mathcal F}$, where
\[
 \mathbf1_{\mathcal F}(A)
 =
 \begin{cases}
 1,&A\in\mathcal F,\\
 0,&A\notin\mathcal F.
 \end{cases}
\]
Then
\[
 \mu_r(f)=\mu_r(\mathcal F),
 \qquad
 I_j^r[f]
 =
 \mu_r\bigl(\mathcal I_j(\mathcal F)\bigr)
 =
 I_j^r(\mathcal F),
 \qquad
 I^r[f]=I_r(\mathcal F).
\]
Moreover, the monotone increasing subcubes of $\{0,1\}^n$ correspond
to the families $\mathcal S_W=\{A\subseteq[n]:W\subseteq A\}$, where $W\subseteq[n]$. Therefore, the theorem has the following formulation in the present notation.

\begin{lemma}[\cite{EllisKellerLifshitz}]\label{lem:biased-isoperimetric-stability}
For every $\eta>0$, there exist constants
$C_1=C_1(\eta), c_0=c_0(\eta)>0$ such that the following
holds. Let $0<r\leq1-\eta$, and let
$
 0<\xi\leq\frac{c_0}{\log(1/r)}.
$
If $\mathcal F\subseteq2^{[n]}$ is increasing and
\[
 rI_r(\mathcal F)
 \leq
 \mu_r(\mathcal F)
 \bigl(\log_r\mu_r(\mathcal F)+\xi\bigr),
\]
then there exists $W\subseteq[n]$ such that
\[
 \mu_r(\mathcal F\mathbin\triangle\mathcal S_W)
 \leq
 C_1
 \frac{\xi\log(1/r)}
 {\log\bigl(1/(\xi\log(1/r))\bigr)}
 \mu_r(\mathcal F).
\]
\end{lemma}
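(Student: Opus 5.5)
This lemma is a translation of \cite[Theorem~1.8]{EllisKellerLifshitz} into the language of families, so the plan is to verify the dictionary carefully rather than to re-prove the stability theorem. Put $f=\mathbf 1_{\mathcal F}$ and identify $A\subseteq[n]$ with its characteristic vector in $\{0,1\}^n$, under which $\mu_r$ becomes the $r$-biased product measure. The Ellis--Keller--Lifshitz theorem says the following. For every $\eta>0$ there are $C_1,c_0>0$ such that, if $f$ is a monotone Boolean function, $0<r\le 1-\eta$, and
\[
 rI^r[f]\le \mu_r(f)\bigl(\log_r\mu_r(f)+\xi\bigr)
 \qquad\text{with}\qquad
 \xi\le \frac{c_0}{\log(1/r)},
\]
then $f$ is close to the indicator of a monotone subcube. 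Here ``close'' means that $\mu_r(f\ne g)$ is at most
\[
 C_1\frac{\xi\log(1/r)}{\log\bigl(1/(\xi\log(1/r))\bigr)}\,\mu_r(f).
\]
I would first restate the theorem in exactly this form and record which normalisation of influence it uses.

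\textbf{Step 1: influences match.} I would check that the influence in \cite{EllisKellerLifshitz} agrees with $I_j^r(\mathcal F)=\mu_r(\mathcal I_j(\mathcal F))$. In that paper, $I^r_j[f]$ is the $\mu_r$-probability that flipping coordinate $j$ changes the value of $f$. The event ``flipping $j$ changes $f(A)$'' is precisely $A\in\mathcal I_j(\mathcal F)$, so the two agree coordinatewise. Summing over $j$ gives $I^r[f]=I_r(\mathcal F)$.

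This is where I expect the only real obstacle, namely the normalisation. Some papers define biased influence with an extra factor $r(1-r)$, or use the derivative form. If \cite{EllisKellerLifshitz} does, I would rescale the hypothesis accordingly and check that the constant $c_0$, and the admissible range of $\xi$, absorb the change. This is possible because $1-r\ge\eta$ is bounded below, so any such factor depends only on $\eta$. A consistency check is that the inequality must be sharp on subcubes. For $\mathcal S_W$ with $|W|=w$, each $j\in W$ has influence $r^{w-1}$, so $rI_r=wr^w=\mu_r\log_r\mu_r$. This matches the normalisation in Lemma~\ref{lem:stability-isoperimetric}, which itself comes from \cite{E24}.

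\textbf{Step 2: subcubes are stars.} A monotone increasing subcube is the set of vectors with $x_j=1$ for all $j\in W$. It therefore corresponds exactly to $\mathcal S_W$.

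\textbf{Step 3: distances match.} Closeness $\mu_r(f\ne \mathbf 1_{\mathcal S_W})$ is exactly $\mu_r(\mathcal F\mathbin\triangle\mathcal S_W)$. Substituting Steps~1--3 into the statement of \cite[Theorem~1.8]{EllisKellerLifshitz} yields the lemma verbatim.
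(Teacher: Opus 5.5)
Your proposal is correct and takes the same route as the paper. The paper also gives no independent proof: it sets $f=\mathbf 1_{\mathcal F}$ and notes that $\mu_r(f)=\mu_r(\mathcal F)$, that $I_j^r[f]=\mu_r(\mathcal I_j(\mathcal F))=I_j^r(\mathcal F)$, and that monotone increasing subcubes are exactly the stars $\mathcal S_W$, and then reads off \cite[Theorem~1.8]{EllisKellerLifshitz}. Your normalisation worry is moot, because that paper's influence is exactly the $\mu_r$-probability that flipping coordinate $j$ changes $f$, as your subcube sharpness check already confirms.
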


We now obtain a quantitative stability refinement of
Lemma~\ref{lem:two-bias-comparison}.

\begin{lemma}
\label{lem:quantitative-comparison}
Fix $t\geq1$ and $0<p<q<1/2$, and put
$
 \theta=\frac{\log q}{\log p}.
$
There exist constants $\delta_0>0$ and $C>0$, depending only on
$t,p,q$, with the following property. Suppose that
$n\geq t$, $\mathcal F\subseteq2^{[n]}$ is increasing,
$0<\delta<\delta_0$, and
\begin{equation}
 \left|
 \frac{\mu_p(\mathcal F)}{p^t}-1
 \right|
 \leq\delta,
 \qquad
 1\leq
 \frac{\mu_q(\mathcal F)}
      {\mu_p(\mathcal F)^\theta}
 \leq1+\delta.
 \label{eq:stability-4}
\end{equation}
Then there exists $T\in\binom{[n]}t$ such that
\begin{equation}
 \mu_p(\mathcal F\mathbin\triangle\mathcal S_T)
 \leq C\delta.
 \label{eq:stability-5}
\end{equation}
\end{lemma}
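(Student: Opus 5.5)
The idea is to convert the near-equality in the two-bias comparison (Lemma~\ref{lem:two-bias-comparison}) into near-equality in the edge isoperimetric inequality at some intermediate bias $r\in[p,q]$. Then Lemma~\ref{lem:biased-isoperimetric-stability} applies there, and we transfer the resulting subcube approximation back to bias $p$.

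As in the proof of Lemma~\ref{lem:two-bias-comparison}, put $h(r)=\log_r\mu_r(\mathcal F)$. The derivative computed there gives
\[
 -h'(r)=\frac{h(r)-rI_r(\mathcal F)/\mu_r(\mathcal F)}{r\log r}
 =\frac{D(r)}{r\log(1/r)},
\]
where $D(r):=rI_r(\mathcal F)/\mu_r(\mathcal F)-h(r)\ge0$ is the isoperimetric deficit. Integrating over $[p,q]$ yields
\[
 \int_p^q\frac{D(r)}{r\log(1/r)}\,dr=h(p)-h(q).
\]
The hypotheses \eqref{eq:stability-4} make the right-hand side $O(\delta)$. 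Indeed, $h(p)-h(q)=\bigl(\theta\log\mu_p(\mathcal F)-\log\mu_q(\mathcal F)\bigr)/\log q$, which is at most $\log(1+\delta)/\log(1/q)$.

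Since $r\log(1/r)$ is bounded above and below on $[p,q]$, some $r\in[p,(p+q)/2]$ (or even a set of positive measure of such $r$) satisfies $D(r)\le C'\delta$. For $\delta_0$ small, $\xi=C'\delta$ is at most $c_0/\log(1/r)$ with $\eta=1/2$. Lemma~\ref{lem:biased-isoperimetric-stability} then gives $W$ with $\mu_r(\mathcal F\triangle\mathcal S_W)\le C''\delta\,\mu_r(\mathcal F)/\log(1/\delta)\le C''\delta$. This already gives $O(\delta)$ with room to spare.

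Next I would identify $|W|=t$. The monotonicity of $h$ and \eqref{eq:stability-4} give $h(r)=t+O(\delta)$, so $\mu_r(\mathcal F)=r^{t}(1+O(\delta))$. On the other hand, $\mu_r(\mathcal S_W)=r^{|W|}$ and $|\mu_r(\mathcal F)-r^{|W|}|\le C''\delta$. Since the values $r^k$ are separated by a gap depending only on $p,q,t$, for $\delta_0$ small this forces $|W|=t$. Set $T=W$.

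The main obstacle is transferring the bound $\mu_r(\mathcal F\triangle\mathcal S_T)=O(\delta)$ from $r$ back to bias $p<r$, because symmetric differences do not compare monotonically across biases. I would split $\mathcal F\triangle\mathcal S_T$ into $\mathcal S_T\setminus\mathcal F$ and $\mathcal F\setminus\mathcal S_T$.

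For the first part, $\mathcal S_T\setminus\mathcal F=\mathcal S_T\cap\mathcal F^{c}$. Its trace on the complement of $T$ is a decreasing family $\mathcal G$, whose measure is nonincreasing in the bias. Hence $\mu_p(\mathcal S_T\setminus\mathcal F)=p^t\mu_p(\mathcal G)\le p^t\mu_r(\mathcal G)=(p/r)^t\mu_r(\mathcal S_T\setminus\mathcal F)=O(\delta)$.

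The second part, $\mathcal F\setminus\mathcal S_T$, is not monotone, so I would use measures instead. We have $\mu_p(\mathcal F\setminus\mathcal S_T)=\mu_p(\mathcal F)-\mu_p(\mathcal F\cap\mathcal S_T)$, and $\mu_p(\mathcal F\cap\mathcal S_T)=p^t-\mu_p(\mathcal S_T\setminus\mathcal F)\ge p^t-O(\delta)$. Combining this with $\mu_p(\mathcal F)\le p^t(1+\delta)$ gives $\mu_p(\mathcal F\setminus\mathcal S_T)\le O(\delta)$. Adding the two parts gives \eqref{eq:stability-5}.
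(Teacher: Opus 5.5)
Your proposal is correct, and its overall structure matches the paper's proof. Both use the same function $h(r)=\log_r\mu_r(\mathcal F)$ and the bound $0\le h(p)-h(q)\le\delta/\log(1/q)$. Both then locate a bias in $(p,q)$ where the isoperimetric deficit is $O(\delta)$. The paper does this with the mean value theorem applied to $h$; your averaging of $\int_p^q D(r)/(r\log(1/r))\,dr$ is the same idea. Both apply Lemma~\ref{lem:biased-isoperimetric-stability} at that bias and use the separation of the values $r^k$ to force $|W|=t$.

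The genuine difference is how you transfer the approximation back to bias $p$.

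\textbf{The paper's transfer.} It controls $\mathcal F\setminus\mathcal S_T$, writing it as a union over $D\subsetneq T$ of the increasing slices $\mathcal F_T(D)$. It uses that each slice's measure is nondecreasing in the bias. It pays the factor $L=((1-p)/(1-q))^t$ for the weights on $T$. It then recovers the other half from the identity $\mu_p(\mathcal F\triangle\mathcal S_T)=p^t-\mu_p(\mathcal F)+2\mu_p(\mathcal F\setminus\mathcal S_T)$.

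\textbf{Your transfer.} You do the dual. The set $\mathcal S_T\setminus\mathcal F$ corresponds to a single slice, the complement in $2^{[n]\setminus T}$ of the increasing family $\mathcal F_T(T)$, so it is decreasing. The weight ratio $(p/r)^t\le1$ works in your favour, so no constant like $L$ appears. The identity $\mu_p(\mathcal F\setminus\mathcal S_T)=\mu_p(\mathcal F)-p^t+\mu_p(\mathcal S_T\setminus\mathcal F)$ then closes the argument using only $\mu_p(\mathcal F)\le p^t(1+\delta)$.

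Your route is slightly shorter and gives a cleaner constant; the paper's is equally valid.

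Two small points to tidy up:
\begin{enumerate}
\item You should record, as the paper does, that the first hypothesis with $\delta<1$ forces $\mathcal F\neq\emptyset$ and $\mathcal F\neq 2^{[n]}$. Hence $0<\mu_r(\mathcal F)<1$ on $[p,q]$, so $h$ is defined and $C^1$ there and your integral identity is legitimate.
\item The denominator in Lemma~\ref{lem:biased-isoperimetric-stability} is $\log\bigl(1/(\xi\log(1/r))\bigr)$, not $\log(1/\delta)$. It is simply bounded below by $\log 2$ once $\delta_0$ is small enough that $\xi\log(1/r)\le1/2$, and that is all you need.
\end{enumerate}
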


\begin{proof}
Write
\[
 \mu_p(\mathcal F)=p^t(1+u),
 \qquad |u|\leq\delta.
\]
If $0<\delta<1/2$, then, since $p<1/2$ and $t\geq1$,
\[
 0<
 \frac12p^t
 <
 \mu_p(\mathcal F)
 <
 \frac32p^t
 <
 1.
\]
Thus, $\mathcal F$ is neither empty nor equal to $2^{[n]}$. It follows that
\[
 0<\mu_r(\mathcal F)<1,
 \qquad r\in[p,q].
\]
Then the function
$
 h(r)=\log_r\mu_r(\mathcal F)
$
is  well defined on $[p,q]$. Moreover,
\begin{equation}
 h(p)
 =
 t+\frac{\log(1+u)}{\log p}
 =
 t+O_p(\delta).
 \label{eq:stability-6}
\end{equation}
Differentiating $h$ and applying
Lemmas~\ref{lem:stability-margulis-russo} and
\ref{lem:stability-isoperimetric}, we obtain
\begin{equation}
 h'(r)
 =
 \frac{
 rI_r(\mathcal F)/\mu_r(\mathcal F)-h(r)
 }{r\log r}
 \leq0.
\label{eq:stability-7}
\end{equation}
where the last inequality uses $r\log r<0$.

Set
\[
 R=
 \frac{\mu_q(\mathcal F)}
      {\mu_p(\mathcal F)^\theta}.
\]
The second inequality in \eqref{eq:stability-4} gives
\[
 0\leq\log R\leq\log(1+\delta)\leq\delta.
\]
By the definitions of $h$ and $\theta$,
\begin{align*}
 \log R
 &=
 \log\mu_q(\mathcal F)
 -
 \theta\log\mu_p(\mathcal F)=
 h(q)\log q
 -
 \frac{\log q}{\log p}\,h(p)\log p\\
 &=
 \bigl(h(q)-h(p)\bigr)\log q=
 \log(1/q)\bigl(h(p)-h(q)\bigr).
\end{align*}
Consequently,
\begin{equation}
 0
 \leq
 h(p)-h(q)
 \leq
 \frac{\delta}{\log(1/q)}.
 \label{eq:stability-8}
\end{equation}
Since $h$ is continuous on $[p,q]$ and differentiable on $(p,q)$,
the mean value theorem gives some $\rho\in(p,q)$ such that
\[
 h(q)-h(p)=h'(\rho)(q-p).
\]
Equivalently,
\[
 -h'(\rho)
 =
 \frac{h(p)-h(q)}{q-p}.
\]
Since $h$ is nonincreasing, the quantity on the right is
nonnegative. Hence, by \eqref{eq:stability-8},
\[
 0\leq-h'(\rho)
 \leq
 \frac{\delta}
 {(q-p)\log(1/q)}.
\]

Define
\begin{equation}
 e_\rho
 =
 \frac{\rho I_\rho(\mathcal F)}{\mu_\rho(\mathcal F)}
 -
 \log_\rho\mu_\rho(\mathcal F).
 \label{eq:stability-9}
\end{equation}
It follows from \eqref{eq:stability-7} that
\[
 e_\rho
 =
 \rho\log(1/\rho)\bigl(-h'(\rho)\bigr).
\]
Since $\rho\log(1/\rho)\leq1$, we have
\[
 0\leq e_\rho
 \leq
 \frac{\delta}{(q-p)\log(1/q)}.
\]
Set
\[
 K=
 \frac{1}{(q-p)\log(1/q)}
 \qquad\text{and}\qquad
 \xi=K\delta.
\]
Then $0\leq e_\rho\leq\xi$.

\textbf{Apply Lemma~\ref{lem:biased-isoperimetric-stability} with
$\eta=1-q$,} and denote the corresponding constants by $C_1=C_1(q)$ and
$c_0=c_0(q)$. Define
\[
 \delta_1
 =
 \min\left\{
 \frac12,\,
 \frac{\min\{c_0,1/2\}}
 {K\log(1/p)}
 \right\}.
\]
For every $0<\delta<\delta_1$, we have
\[
 K\delta\log(1/p)\leq\min\{c_0,1/2\}.
\]
Since $\rho\in(p,q)$,
$
 \log(1/\rho)\leq\log(1/p),
$
and hence
\[
 0<\xi
 \leq
 \frac{c_0}{\log(1/\rho)},
 \qquad
 \xi\log(1/\rho)\leq\frac12.
\]
Moreover, by the definition of $e_\rho$,
\[
 \rho I_\rho(\mathcal F)
 =
 \mu_\rho(\mathcal F)
 \bigl(
 \log_\rho\mu_\rho(\mathcal F)+e_\rho
 \bigr)\leq
 \mu_\rho(\mathcal F)
 \bigl(
 \log_\rho\mu_\rho(\mathcal F)+\xi
 \bigr).
\]
Thus Lemma~\ref{lem:biased-isoperimetric-stability} applies at bias
$\rho$. It yields a set $W\subseteq[n]$ such that
\begin{equation}
\begin{aligned}
 \kappa
 &:=
 \mu_\rho(\mathcal F\mathbin\triangle\mathcal S_W)\leq
 C_1
 \frac{\xi\log(1/\rho)}
 {\log\bigl(1/(\xi\log(1/\rho))\bigr)}
 \mu_\rho(\mathcal F)\\
 &\leq
 \frac{C_1K\log(1/p)}{\log2}\,\delta=:
 C_2\delta.
\end{aligned}
 \label{eq:stability-10}
\end{equation}
Here we used $\mu_\rho(\mathcal F)\leq1$ and
$\xi\log(1/\rho)\leq1/2$.

Since $h$ is nonincreasing and $\rho\in(p,q)$,
$
 h(q)\leq h(\rho)\leq h(p).
$
It follows from \eqref{eq:stability-6} and
\eqref{eq:stability-8} that
\[
 |h(\rho)-t|
 \leq
 |h(p)-t|+h(p)-h(q)\leq
 C_3\delta
\]
for some constant $C_3=C_3(p,q)>0$. 
By the definition of $h$,
$
 h(\rho)
 =
 \log\mu_\rho(\mathcal F)/\log\rho.
$
Thus,
\[
 \mu_\rho(\mathcal F)
 =
 \rho^{h(\rho)}.
\]
Since $0<\delta<\delta_1\leq1/2$, we have
\[
 |h(\rho)-t|\leq\frac{C_3}{2}.
\]
Applying the mean value theorem to the function $x\mapsto\rho^x$,
there exists $\zeta$ between $0$ and $h(\rho)-t$ such that
\[
 \left|\rho^{h(\rho)-t}-1\right|
 =
 \rho^\zeta\log(1/\rho)\,|h(\rho)-t|.
\]
Moreover,
\[
 |\zeta|
 \leq
 |h(\rho)-t|
 \leq
 C_3\delta
 \leq
 \frac{C_3}{2},
\]
since $\delta<\delta_1\leq1/2$. As $\rho\in(p,q)$, it follows that
\[
 \rho^\zeta\leq p^{-C_3/2}
 \qquad\text{and}\qquad
 \log(1/\rho)\leq\log(1/p).
\]
Consequently,
\[
 \left|\rho^{h(\rho)-t}-1\right|
 \leq
 C_3p^{-C_3/2}\log(1/p)\,\delta.
\]
Therefore,
\[
 \bigl|\mu_\rho(\mathcal F)-\rho^t\bigr|
 =
 \rho^t
 \left|\rho^{h(\rho)-t}-1\right|\leq
 C_3p^{-C_3/2}\log(1/p)\,\delta
 =:
 C_4\delta.
\]
Since $\mu_\rho(\mathcal S_W)=\rho^{|W|}$,
\eqref{eq:stability-10} gives
\begin{equation}
\begin{aligned}
 \bigl|\rho^{|W|}-\rho^t\bigr|
 &\leq
 \bigl|
 \mu_\rho(\mathcal S_W)-\mu_\rho(\mathcal F)
 \bigr|
 +
 \bigl|\mu_\rho(\mathcal F)-\rho^t\bigr|\\
 &\leq
 \mu_\rho(\mathcal F\mathbin\triangle\mathcal S_W)
 +
 C_4\delta\\
 &\leq
 (C_2+C_4)\delta.
\end{aligned}
 \label{eq:stability-11}
\end{equation}

Put
$
 C_5=C_2+C_4.
$
For every nonnegative integer $k\neq t$,
\[
 |\rho^k-\rho^t|
 =
 \rho^{\min\{k,t\}}
 \bigl(1-\rho^{|k-t|}\bigr)\geq
 p^t(1-q).
\]
Indeed,
$
 \rho^{\min\{k,t\}}\geq\rho^t\geq p^t
$
and
$
 1-\rho^{|k-t|}
 \geq
 1-\rho
 \geq
 1-q.
$
Define
\[
 \delta_0
 =
 \min\left\{
 \delta_1,\,
 \frac{p^t(1-q)}{2C_5}
 \right\}.
\]
If $0<\delta<\delta_0$, then
\eqref{eq:stability-11} implies
\[
 \bigl|\rho^{|W|}-\rho^t\bigr|
 <
 \frac12 p^t(1-q).
\]
Since $|W|$ is a nonnegative integer, if $|W|\neq t$, the preceding
separation estimate would give
$
 \bigl|\rho^{|W|}-\rho^t\bigr|
 \geq
 p^t(1-q),
$
contradicting the inequality above. Hence, $|W|=t$. Setting $T=W$, we
have $T\in\binom{[n]}{t}$.

\textbf{It remains to transfer the approximation from bias $\rho$ to bias
$p$.} For each $D\subseteq T$, define
\[
 \mathcal F_T(D)
 =
 \bigl\{
 A\subseteq[n]\setminus T:
 A\cup D\in\mathcal F
 \bigr\}.
\]
Since $\mathcal F$ is increasing, each $\mathcal F_T(D)$ is
increasing. 
Then
Lemma~\ref{lem:stability-margulis-russo} shows that its biased measure
is nondecreasing in the bias. Hence, as $p<\rho$,
\[
 \mu_p^{[n]\setminus T}\bigl(\mathcal F_T(D)\bigr)
 \leq
 \mu_\rho^{[n]\setminus T}\bigl(\mathcal F_T(D)\bigr).
\]
Put
\[
 L=
 \left(\frac{1-p}{1-q}\right)^t.
\]
For every $D\subseteq T$, since $p<\rho<q$,
\[
 \frac{
 p^{|D|}(1-p)^{t-|D|}
 }{
 \rho^{|D|}(1-\rho)^{t-|D|}
 }
 =
 \left(\frac{p}{\rho}\right)^{|D|}
 \left(\frac{1-p}{1-\rho}\right)^{t-|D|}\leq
 \left(\frac{1-p}{1-q}\right)^t=L.
\]
Summing over all $D\subsetneq T$, we obtain
\begin{equation}
\begin{aligned}
 \mu_p(\mathcal F\setminus\mathcal S_T)
 &=
 \sum_{D\subsetneq T}
 p^{|D|}(1-p)^{t-|D|}
 \mu_p^{[n]\setminus T}\bigl(\mathcal F_T(D)\bigr)\\
 &\leq
 L
 \sum_{D\subsetneq T}
 \rho^{|D|}(1-\rho)^{t-|D|}
 \mu_\rho^{[n]\setminus T}\bigl(\mathcal F_T(D)\bigr)\\
 &=
 L\mu_\rho(\mathcal F\setminus\mathcal S_T)\\
 &\leq
 L\kappa.
\end{aligned}
 \label{eq:stability-12}
\end{equation}
Since $\mu_p(\mathcal S_T)=p^t$,
\[
\begin{aligned}
 \mu_p(\mathcal F\mathbin\triangle\mathcal S_T)
 &=
 \mu_p(\mathcal F\setminus\mathcal S_T)
 +
 \mu_p(\mathcal S_T\setminus\mathcal F)\\
 &=
 p^t-\mu_p(\mathcal F)
 +
 2\mu_p(\mathcal F\setminus\mathcal S_T).
\end{aligned}
\]
The first inequality in \eqref{eq:stability-4} gives
\[
 \bigl|p^t-\mu_p(\mathcal F)\bigr|
 \leq
 p^t\delta.
\]
Combining this with \eqref{eq:stability-10} and
\eqref{eq:stability-12}, we obtain
\[
 \mu_p(\mathcal F\mathbin\triangle\mathcal S_T)
 \leq
 \bigl|p^t-\mu_p(\mathcal F)\bigr|
 +
 2L\kappa\leq
 \bigl(p^t+2LC_2\bigr)\delta.
\]
Taking
\[
 C=p^t+2LC_2
\]
proves \eqref{eq:stability-5}.
\end{proof}

\subsection{Identification of the center}

The quantitative comparison obtained above yields the following
$t$-star approximation for each family in a nearly extremal cross
$t$-intersecting pair.

\begin{lemma}
\label{lem:individual-stability}
Fix $t\geq1$ and
$p_1,p_2\in(0,1/(t+1))$. There exist constants
$\varepsilon_0>0$ and $C>0$, depending only on
$t,p_1,p_2$, with the following property. Suppose that
$n\geq t$, $0<\varepsilon<\varepsilon_0$, and
$\mathcal F_1,\mathcal F_2\subseteq2^{[n]}$ are increasing,
cross $t$-intersecting families satisfying
\[
 \mu_{p_1}(\mathcal F_1)
 \mu_{p_2}(\mathcal F_2)
 >
 (1-\varepsilon)^2(p_1p_2)^t.
\]
Then there exist $T_1,T_2\in\binom{[n]}t$ such that
\begin{equation}
 \mu_{p_i}
 \bigl(\mathcal F_i\mathbin\triangle\mathcal S_{T_i}\bigr)
 \leq C\varepsilon,
 \qquad i=1,2.
 \label{eq:stability-14}
\end{equation}
\end{lemma}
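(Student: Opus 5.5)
The plan is to check the two hypotheses of Lemma~\ref{lem:quantitative-comparison} for each family separately, taking $\delta$ to be a constant multiple of $\varepsilon$. To produce these inequalities we apply Theorem~\ref{thm:product-measure} at biases slightly above $p_1,p_2$.

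First choose $q_1,q_2$ with $p_i<q_i<1/(t+1)$; for instance $q_i=(p_i+1/(t+1))/2$. Then $q_i<1/2$. Put $\theta_i=\log q_i/\log p_i\in(0,1)$ and $a_i=\mu_{p_i}(\mathcal F_i)/p_i^t$. Theorem~\ref{thm:product-measure} at $(p_1,p_2)$ gives $a_1a_2\le1$, and the hypothesis gives $a_1a_2>(1-\varepsilon)^2$. Since $\mu\le1$, we have $a_i\le p_i^{-t}$. Hence each $a_i>(1-\varepsilon)^2p_{3-i}^{t}$, which is bounded below.

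This is not yet enough for $|a_i-1|=O(\varepsilon)$, so we bring in the mixed pairs. Apply Theorem~\ref{thm:product-measure} at the biases $(q_1,p_2)$ and $(p_1,q_2)$. This is legitimate because the families are cross $t$-intersecting for any biases. We get $\mu_{q_1}(\mathcal F_1)\mu_{p_2}(\mathcal F_2)\le(q_1p_2)^t$. Lemma~\ref{lem:two-bias-comparison} gives $\mu_{q_1}(\mathcal F_1)\ge\mu_{p_1}(\mathcal F_1)^{\theta_1}=p_1^{t\theta_1}a_1^{\theta_1}=q_1^ta_1^{\theta_1}$. Together these yield $a_1^{\theta_1}a_2\le1$, and symmetrically $a_1a_2^{\theta_2}\le1$. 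Combining with $a_1a_2>(1-\varepsilon)^2$:
\[
 a_1^{1-\theta_1}\ge\frac{a_1a_2}{a_1^{\theta_1}a_2}>(1-\varepsilon)^2.
\]
So $a_1\ge(1-\varepsilon)^{2/(1-\theta_1)}=1-O(\varepsilon)$, and similarly $a_2\ge1-O(\varepsilon)$. Then $a_1\le 1/a_2\le1+O(\varepsilon)$, so $|a_i-1|\le C'\varepsilon$. This establishes the first condition in \eqref{eq:stability-4}.

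For the second condition, write $R_1=\mu_{q_1}(\mathcal F_1)/\mu_{p_1}(\mathcal F_1)^{\theta_1}$. Lemma~\ref{lem:two-bias-comparison} gives $R_1\ge1$. The mixed inequality reads $R_1q_1^ta_1^{\theta_1}\,p_2^ta_2\le(q_1p_2)^t$, that is, $R_1\le a_1^{-\theta_1}a_2^{-1}$. Since $a_1a_2>(1-\varepsilon)^2$ and $a_1\le1+C'\varepsilon$,
\[
 R_1\le\frac{a_1^{1-\theta_1}}{a_1a_2}\le\frac{(1+C'\varepsilon)}{(1-\varepsilon)^2}=1+O(\varepsilon).
\]
The same argument bounds $R_2$. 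Both conditions of \eqref{eq:stability-4} therefore hold with $\delta=C''\varepsilon$, where $C''$ depends only on $t,p_1,p_2$.

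Now take $\varepsilon_0$ small enough that $C''\varepsilon_0<\min(\delta_0(t,p_1,q_1),\delta_0(t,p_2,q_2))$. Lemma~\ref{lem:quantitative-comparison}, applied to each $\mathcal F_i$ with the pair $(p_i,q_i)$, gives $T_i$ with $\mu_{p_i}(\mathcal F_i\triangle\mathcal S_{T_i})\le C\varepsilon$. Its requirement that $\mathcal F_i$ be increasing is part of our hypotheses.

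The main obstacle is the bookkeeping that converts the multiplicative product information into individual $O(\varepsilon)$ bounds. The key trick is the pair of mixed-bias inequalities together with $\theta_i<1$, which forces each $a_i$ near $1$. The rest is routine.
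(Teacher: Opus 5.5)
Your proposal is correct and follows essentially the same route as the paper. The mixed-bias applications of Theorem~\ref{thm:product-measure}, combined with Lemma~\ref{lem:two-bias-comparison}, give $a_1^{\theta_1}a_2\le1$ and $a_1a_2^{\theta_2}\le1$; these pin each $a_i$ to $1+O(\varepsilon)$, and Lemma~\ref{lem:quantitative-comparison} then finishes the argument. The differences are cosmetic: the paper uses a common exponent $\theta$ with $r_i=p_i^\theta$ and bounds $R_1R_2\le(x_1x_2)^{-\theta}$ via a further application of the theorem at $(r_1,r_2)$, whereas you bound each $R_i$ directly from $R_1\le a_1^{-\theta_1}a_2^{-1}$ together with your bound on $a_1$, which works equally well.
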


\begin{proof}
Choose $\theta\in(0,1)$ sufficiently close to $1$ that
\[
 r_i=p_i^\theta<\frac1{t+1},
 \qquad i=1,2.
\]
Then $p_i<r_i<1/2$.  We shall take
$\varepsilon_0\leq1/2$.  For
$0<\varepsilon<\varepsilon_0$, put
\[
 a_i=\mu_{p_i}(\mathcal F_i),
 \qquad
 x_i=\frac{a_i}{p_i^t},
 \qquad
 \lambda=(1-\varepsilon)^2.
\]
Thus, $0<\lambda<1$.
The hypothesis and Theorem~\ref{thm:product-measure} give
\[
 \lambda<x_1x_2\leq1.
\]

By Lemma~\ref{lem:two-bias-comparison},
\begin{equation}
 \mu_{r_i}(\mathcal F_i)\geq a_i^\theta,
 \qquad i=1,2.\label{f56}
\end{equation}
Applying Theorem~\ref{thm:product-measure} at
$(r_1,p_2)$ gives
\[
 a_1^\theta a_2
 \leq
 \mu_{r_1}(\mathcal F_1)\mu_{p_2}(\mathcal F_2)
 \leq
 (r_1p_2)^t=p_1^{\theta t}p_2^t.
\]
Similarly, applying it at $(p_1,r_2)$ gives
\[
 a_1a_2^\theta\leq(p_1r_2)^t=p_2^{\theta t}p_1^t.
\]
Dividing by the corresponding powers of $p_1$ and $p_2$, we obtain
\[
 x_1^\theta x_2\leq1,
 \qquad
 x_1x_2^\theta\leq1.
\]

Write $X_i=\log x_i$ and $z=X_1+X_2$. Then
$\log\lambda<z\leq0$. Moreover,
\[
 \theta X_1+X_2\leq0,
 \qquad
 X_1+\theta X_2\leq0.
\]
It follows that
\[
 z-(1-\theta)X_1\leq0
 \qquad\text{and}\qquad
 (1-\theta)X_1+\theta z\leq0.
\]
Since $\log\lambda<z\leq0$ and $0<\theta<1$, these inequalities give
\[
 \frac{\log\lambda}{1-\theta}
 <
 \frac{z}{1-\theta}
 \leq
 X_1
 \leq
 -\frac{\theta z}{1-\theta}
 <
 -\frac{\theta\log\lambda}{1-\theta}.
\]
Similarly,
\[
 \frac{\log\lambda}{1-\theta}
 <
 X_2
 <
 -\frac{\theta\log\lambda}{1-\theta}.
 \]
Recall that $\varepsilon_0\leq1/2$. 
Since $\lambda=(1-\varepsilon)^2$, $\varepsilon<\varepsilon_0\leq1/2$, and
\[
 -\log(1-\varepsilon)
 =
 \int_0^\varepsilon\frac{\mathrm ds}{1-s}
 \leq
 \frac{\varepsilon}{1-\varepsilon},
\]
we have
\[
 -\log\lambda
 =
 -2\log(1-\varepsilon)
 \leq
 \frac{2\varepsilon}{1-\varepsilon}
 \leq
 4\varepsilon.
\]
It follows that
\[
 |X_i|
 \leq
 \frac{4}{1-\theta}\varepsilon,
 \qquad i=1,2.
\]
Since $\varepsilon\leq1/2$, it follows that
\[
 e^{|X_i|}
 \leq
 e^{\frac{2}{1-\theta}}.
\]
Recalling that $x_i=e^{X_i}$ and applying the mean value theorem to
the exponential function, we obtain
\begin{equation}
 |x_i-1|
 \leq
 e^{|X_i|}|X_i|
 \leq
 A\varepsilon,
 \qquad i=1,2,
 \label{eq:stability-18}
\end{equation}
where
\[
 A=
 \frac{4}{1-\theta}
 e^{\frac{2}{1-\theta}}.
\]

For $i=1,2$, set
\[
 R_i=
 \frac{\mu_{r_i}(\mathcal F_i)}{a_i^\theta}.
\]
By \eqref{f56}, $R_i\geq1$. Applying
Theorem~\ref{thm:product-measure} at the pair of biases $(r_1,r_2)$,
we obtain
\[
 R_1R_2(a_1a_2)^\theta
 =
 \mu_{r_1}(\mathcal F_1)
 \mu_{r_2}(\mathcal F_2)\leq
 (r_1r_2)^t
 =
 (p_1p_2)^{\theta t}.
\]
Since $a_1a_2=(p_1p_2)^t x_1x_2$, this implies
\[
 R_1R_2
 \leq
 (x_1x_2)^{-\theta}
 <
 \lambda^{-\theta}.
\]
As $R_1,R_2\geq1$, each $R_i$ is at most $R_1R_2$. Moreover,
\[
 0\leq-\theta\log\lambda\leq4\theta\varepsilon\leq2\theta.
\]
Therefore, applying the mean value theorem gives
\[
 \lambda^{-\theta}-1
 =
 e^{-\theta\log\lambda}-e^0\leq
 e^{2\theta}(-\theta\log\lambda)\leq
 4\theta e^{2\theta}\varepsilon.
\]
Hence,
\begin{equation}
 1\leq R_i\leq 1+B\varepsilon,
 \qquad i=1,2,
 \label{eq:stability-19}
\end{equation}
where
$
 B=4\theta e^{2\theta}.
$

For each $i\in\{1,2\}$, let $\delta_i^*>0$ and $C_i^*>0$ be the
constants provided by Lemma~\ref{lem:quantitative-comparison} for the
parameters $t,p_i,r_i$. Put
\[
 D=\max\{A,B\}
\]
and define
\[
 \varepsilon_0
 =
 \min\left\{
 \frac12,\,
 \frac{\delta_1^*}{D},\,
 \frac{\delta_2^*}{D}
 \right\}.
\]
If $0<\varepsilon<\varepsilon_0$ and $\delta=D\varepsilon$, then
$0<\delta<\delta_i^*$ for $i=1,2$. Furthermore,
\eqref{eq:stability-18} and \eqref{eq:stability-19} give
\[
 \left|
 \frac{\mu_{p_i}(\mathcal F_i)}{p_i^t}-1
 \right|
 =
 |x_i-1|
 \leq A\varepsilon\leq 
 \delta
\]
and
\[
 1
 \leq
 \frac{\mu_{r_i}(\mathcal F_i)}
      {\mu_{p_i}(\mathcal F_i)^\theta}
 =
 R_i
 \leq 1+B\varepsilon \leq
 1+\delta.
\]
Therefore,  applying Lemma~\ref{lem:quantitative-comparison} yields
$T_i\in\binom{[n]}t$ such that
\[
 \mu_{p_i}
 \bigl(\mathcal F_i\mathbin\triangle\mathcal S_{T_i}\bigr)
 \leq
 C_i^*\delta= C_i^*D\varepsilon
 \leq
 C\varepsilon,
 \qquad i=1,2,
\]
where
$
 C=D\max\{C_1^*,C_2^*\}.
$
This proves \eqref{eq:stability-14}.
\end{proof}

Lemma~\ref{lem:individual-stability} may produce two
different centers. The following separation estimate will be used to
show that the centers of a sufficiently near-extremal pair must
coincide.

\begin{lemma}
\label{lem:center-alignment}
Let $T_1,T_2\in\binom{[n]}t$, let
$0<p_1,p_2<1$ satisfy $p_1+p_2\leq1$, and let
$\mathcal F_1,\mathcal F_2\subseteq2^{[n]}$ be cross
$t$-intersecting. If $T_1\neq T_2$, then
\begin{equation}
 \frac{\mu_{p_1}
  (\mathcal S_{T_1}\setminus\mathcal F_1)}{p_1^t}
 +
 \frac{\mu_{p_2}
  (\mathcal S_{T_2}\setminus\mathcal F_2)}{p_2^t}
 \geq
 \bigl((1-p_1)(1-p_2)\bigr)^t.
 \label{eq:stability-20}
\end{equation}
\end{lemma}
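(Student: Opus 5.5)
We will exhibit explicit pairs of sets in $\mathcal S_{T_1}\times\mathcal S_{T_2}$ that fail to $t$-intersect, and use cross $t$-intersection to force one member of each such pair out of the corresponding family. Since $T_1\neq T_2$ and $|T_1|=|T_2|=t$, there is $x\in T_1\setminus T_2$; put $U=T_1\cup T_2$, so $|U|\geq t+1$. Consider pairs $(A,B)$ with $A\cap U=T_1$ and $B\cap U=T_2$. Then $A\cap B\cap U=T_1\cap T_2$ has size at most $t-1$. If in addition $A\setminus U$ and $B\setminus U$ are disjoint, then $|A\cap B|\leq t-1$. In that case $A\in\mathcal F_1$ and $B\in\mathcal F_2$ cannot both hold. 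Hence for every such pair, $\mathbf 1[A\in\mathcal S_{T_1}\setminus\mathcal F_1]+\mathbf 1[B\in\mathcal S_{T_2}\setminus\mathcal F_2]\geq1$.

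We couple the two biased measures so that this disjointness holds almost surely outside $U$. Using $p_1+p_2\leq1$, for each $r\notin U$ we split $[0,1]$ into intervals of lengths $p_1$, $p_2$, $1-p_1-p_2$ and draw a uniform variable $\omega_r$. We put $r\in A$ iff $\omega_r$ lies in the first interval, and $r\in B$ iff it lies in the second. On $U$ we fix $A\cap U=T_1$ and $B\cap U=T_2$. The law of $A$ restricted to $[n]\setminus U$ is then $\mu_{p_1}$, and likewise for $B$ with $p_2$, while $A\setminus U$ and $B\setminus U$ are always disjoint. Taking expectations of the indicator inequality gives
\[
 \Pr\bigl(A\notin\mathcal F_1\bigr)+\Pr\bigl(B\notin\mathcal F_2\bigr)\geq1,
\]
where $A$ ranges over the sets of $\mathcal S_{T_1}$ with trace exactly $T_1$ on $U$.

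Next we convert these probabilities into measures of $\mathcal S_{T_i}\setminus\mathcal F_i$. Conditioned on $A\in\mathcal S_{T_1}$, the event $A\cap U=T_1$ means avoiding the $|U\setminus T_1|=|T_2\setminus T_1|=:k$ elements, which has $\mu_{p_1}$-probability $(1-p_1)^{k}$. Therefore
\[
 \frac{\mu_{p_1}(\mathcal S_{T_1}\setminus\mathcal F_1)}{p_1^t}\geq(1-p_1)^k\Pr(A\notin\mathcal F_1),
\]
and similarly for $B$ with $(1-p_2)^k$. Since $1\leq k\leq t$, both factors are at least $((1-p_1)(1-p_2))^t$. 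Summing and applying the displayed inequality then yields \eqref{eq:stability-20}.

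The main point needing care is the coupling outside $U$. We need $A\setminus U$ and $B\setminus U$ disjoint with the correct marginals, and that is exactly where $p_1+p_2\leq1$ is used. The only other check is the conditional-measure bookkeeping, which is routine.
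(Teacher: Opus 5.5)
Your proof is correct and is essentially the paper's argument. Both use the same disjoint three-interval coupling on $[n]\setminus(T_1\cup T_2)$, which is where $p_1+p_2\leq1$ enters, followed by a union bound. The only difference is bookkeeping. You fix the traces $A\cap U=T_1$ and $B\cap U=T_2$ first and pay the factors $(1-p_1)^k$ and $(1-p_2)^k$ separately. The paper instead couples the full star-conditioned laws and restricts to an event of probability $((1-p_1)(1-p_2))^d$. Your version even gives the slightly stronger intermediate bound $\min\{(1-p_1)^k,(1-p_2)^k\}$.
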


\begin{proof}
Set
\[
 T_0=T_1\cap T_2,\qquad
 D_1=T_1\setminus T_2,\qquad
 D_2=T_2\setminus T_1,
 \qquad
 R=[n]\setminus(T_1\cup T_2).
\]
Since $|T_1|=|T_2|=t$, we have
\[
 d:=|D_1|=|D_2|=t-|T_0|.
\]
The assumption $T_1\neq T_2$ implies
\[
 1\leq d\leq t
 \qquad\text{and}\qquad
 |T_0|=t-d<t.
\]

\textbf{For each $j\in[n]$, choose a random vector
$(X_j,Y_j)\in\{0,1\}^2$, independently for different coordinates
$j$.} The entry in the column $(x,y)$ of the following table is
$\Pr((X_j,Y_j)=(x,y))$:
\[
\begin{array}{c|cccc}
 & (1,1) & (1,0) & (0,1) & (0,0)\\
\hline
 j\in T_0 & 1   & 0       & 0       & 0\\
 j\in D_1 & p_2 & 1-p_2   & 0       & 0\\
 j\in D_2 & p_1 & 0       & 1-p_1   & 0\\
 j\in R   & 0   & p_1     & p_2     & 1-p_1-p_2
\end{array}
\]
This defines a probability distribution because
$p_1+p_2\leq1$. Define the random sets
\[
 A=\{j\in[n]:X_j=1\},
 \qquad
 B=\{j\in[n]:Y_j=1\}.
\]

 By construction, every element of $T_1$ belongs to $A$. For
$j\notin T_1$, the variables $X_j$ are independent and satisfy
\[
 \Pr(X_j=1)=p_1,
 \qquad
 \Pr(X_j=0)=1-p_1.
\]
Therefore, for every fixed $E\subseteq[n]$,
\[
 \Pr(A=E)
 =
 \begin{cases}
 p_1^{|E|-t}(1-p_1)^{n-|E|},
   & T_1\subseteq E,\\
 0,
   & T_1\nsubseteq E.
 \end{cases}
\]
Consequently, for every $\mathcal H\subseteq2^{[n]}$,
\begin{align*}
 \Pr(A\in\mathcal H)
 &=
 \sum_{E\in\mathcal H, T_1\subseteq E}
 p_1^{|E|-t}(1-p_1)^{n-|E|}\\
 &=
 \frac{1}{p_1^t}
 \sum_{E\in\mathcal H, T_1\subseteq E}
 p_1^{|E|}(1-p_1)^{n-|E|}\\
 &=
 \frac{
  \mu_{p_1}(\mathcal H\cap\mathcal S_{T_1})
 }{p_1^t}.
\end{align*}
Since
$
 \mu_{p_1}(\mathcal S_{T_1})=p_1^t,
$
this also shows that $A$ has the $p_1$-biased distribution
conditioned on containing $T_1$. Similarly,
\[
 \Pr(B\in\mathcal H)
 =
 \frac{
  \mu_{p_2}(\mathcal H\cap\mathcal S_{T_2})
 }{p_2^t}.
\]
In particular,
\begin{equation}
 \Pr(A\notin\mathcal F_1)
 =
 \frac{
  \mu_{p_1}(\mathcal S_{T_1}\setminus\mathcal F_1)
 }{p_1^t},
 \qquad
 \Pr(B\notin\mathcal F_2)
 =
 \frac{
  \mu_{p_2}(\mathcal S_{T_2}\setminus\mathcal F_2)
 }{p_2^t}.
 \label{eq:center-marginals}
\end{equation}

\textbf{Now consider the event}
\[
 \mathcal E
 =
 \bigl\{X_j=0\text{ for every }j\in D_2\bigr\}
 \cap
 \bigl\{Y_j=0\text{ for every }j\in D_1\bigr\}.
\]
For each $j\in D_2$, the probability that $X_j=0$ is $1-p_1$,
and for each $j\in D_1$, the probability that $Y_j=0$ is $1-p_2$.
Independence across coordinates gives
\begin{equation}
 \Pr(\mathcal E)
 =
 (1-p_1)^d(1-p_2)^d.
 \label{eq:center-event}
\end{equation}

We next determine $A\cap B$ on $\mathcal E$. 
For $j\in T_0$, the first row of the table gives
$
 (X_j,Y_j)=(1,1).
$
Hence, $j\in A\cap B$. For $j\in D_1$, we have $X_j=1$ by the
construction, while the event $\mathcal E$ requires $Y_j=0$.
Hence,
\[
 (X_j,Y_j)=(1,0)
 \qquad\text{for every }j\in D_1
 \text{ on }\mathcal E.
\]
Similarly, for $j\in D_2$, the construction gives $Y_j=1$, while
$\mathcal E$ requires $X_j=0$. Thus
\[
 (X_j,Y_j)=(0,1)
 \qquad\text{for every }j\in D_2
 \text{ on }\mathcal E.
\]
Finally, for $j\in R$, the last row of the table shows that
\[
 (X_j,Y_j)\in\{(1,0),(0,1),(0,0)\}.
\]
On the event $\mathcal E$, these cases show that $T_0\subseteq A\cap B$, and no element of
$D_1\cup D_2\cup R=[n]\setminus T_0$ belongs to $A\cap B$.
Therefore,
\[
 A\cap B=T_0
 \qquad\text{and}\qquad
 |A\cap B|=|T_0|=t-d<t
 \quad\text{on }\mathcal E.
\]

Since $\mathcal F_1$ and $\mathcal F_2$ are cross
$t$-intersecting, the events $A\in\mathcal F_1$ and
$B\in\mathcal F_2$ cannot occur simultaneously on $\mathcal E$.
Therefore,
\[
 \mathcal E
 \subseteq
 \{A\notin\mathcal F_1\}
 \cup
 \{B\notin\mathcal F_2\}.
\]
Taking probabilities in the preceding event inclusion, we obtain
\begin{align*}
 \bigl((1-p_1)(1-p_2)\bigr)^d
 &=
 \Pr(\mathcal E)\leq
 \Pr\bigl(
   \{A\notin\mathcal F_1\}
   \cup
   \{B\notin\mathcal F_2\}
 \bigr)\\
 &\leq
 \Pr(A\notin\mathcal F_1)
 +
 \Pr(B\notin\mathcal F_2)\\
 &=
 \frac{
  \mu_{p_1}(\mathcal S_{T_1}\setminus\mathcal F_1)
 }{p_1^t}
 +
 \frac{
  \mu_{p_2}(\mathcal S_{T_2}\setminus\mathcal F_2)
 }{p_2^t},
\end{align*}
where the first equality follows from \eqref{eq:center-event} and
the last equality follows from \eqref{eq:center-marginals}.
Since
\[
 0<(1-p_1)(1-p_2)<1
 \qquad\text{and}\qquad
 d\leq t,
\]
we have
\[
 \bigl((1-p_1)(1-p_2)\bigr)^d
 \geq
 \bigl((1-p_1)(1-p_2)\bigr)^t.
\]
Combining the preceding estimates yields \eqref{eq:stability-20},
as desired.
\end{proof}

We are now ready to prove Theorem~\ref{thm:stability}.

\begin{proof}[Proof of Theorem~\ref{thm:stability}]
Let $\varepsilon_{\mathrm{ind}}>0$ and
$C_{\mathrm{ind}}>0$ be the constants provided by
Lemma~\ref{lem:individual-stability}. Define
\[
 \beta=\bigl((1-p_1)(1-p_2)\bigr)^t,
 \qquad
 Q=p_1^{-t}+p_2^{-t},
\]
and set
\begin{equation}
 \varepsilon_*
 =
 \min\left\{
 \frac12,\,
 \varepsilon_{\mathrm{ind}},\,
 \frac{\beta}{2C_{\mathrm{ind}}Q}
 \right\}.
 \label{eq:stability-epsilon-threshold}
\end{equation}
Since $0<p_1,p_2<1$, we have $\beta>0$, and hence
$\varepsilon_*>0$. All the quantities in
\eqref{eq:stability-epsilon-threshold} depend only on
$t,p_1,p_2$. We shall prove the theorem with
\begin{equation}
 C
 =
 \max\left\{
 C_{\mathrm{ind}}+2,\,
 \frac{2}{\varepsilon_*}
 \right\}.
 \label{eq:stability-final-constant}
\end{equation}

\textbf{First suppose that}
\[
 0<\varepsilon<\varepsilon_*.
\]
For $i=1,2$, let
\[
 \widehat{\mathcal F}_i
 =
 \mathcal F_i^\uparrow
 =
 \left\{
 A\subseteq[n]:
 F\subseteq A
 \text{ for some }F\in\mathcal F_i
 \right\},
\]
and write
\[
 a_i=\mu_{p_i}(\mathcal F_i),
 \qquad
 \widehat a_i=\mu_{p_i}(\widehat{\mathcal F}_i),
 \qquad
 M=(p_1p_2)^t,
 \qquad
 \lambda=(1-\varepsilon)^2.
\]

The families $\widehat{\mathcal F}_1$ and
$\widehat{\mathcal F}_2$ are increasing. They are also cross
$t$-intersecting.
By Theorem~\ref{thm:product-measure},
\[
 \widehat a_1\widehat a_2\leq M.
\]
On the other hand, the hypothesis gives
$
 a_1a_2>\lambda M.
$
Since $\varepsilon<1/2$, we have $\lambda>0$, so
$a_i,\widehat a_i>0$. Put
\[
 \rho_i=\frac{a_i}{\widehat a_i},
 \qquad i=1,2.
\]
Because
$\mathcal F_i\subseteq\widehat{\mathcal F}_i$, we have
$0<\rho_i\leq1$. Moreover,
\[
 \rho_1\rho_2
 =
 \frac{a_1a_2}{\widehat a_1\widehat a_2}
 >
 \frac{\lambda M}{\widehat a_1\widehat a_2}
 \geq\lambda.
\]
Since each $\rho_i$ is at most $1$, it follows that
\[
 \rho_i\geq\rho_1\rho_2>\lambda,
 \qquad i=1,2.
\]
Consequently,
\begin{align}
 \mu_{p_i}
 \bigl(\widehat{\mathcal F}_i\setminus\mathcal F_i\bigr)
 &=
 \widehat a_i-a_i=
 \widehat a_i(1-\rho_i) <
 \widehat a_i(1-\lambda) \notag\\
 &\leq
 1-\lambda
 =
 2\varepsilon-\varepsilon^2
 <
 2\varepsilon,
 \qquad i=1,2.
 \label{eq:stability-21}
\end{align}
We also have
\[
 \widehat a_1\widehat a_2
 \geq
 a_1a_2
 >
 (1-\varepsilon)^2(p_1p_2)^t.
\]
Since
$\varepsilon<\varepsilon_*\leq\varepsilon_{\mathrm{ind}}$,
Lemma~\ref{lem:individual-stability}, applied to
$\widehat{\mathcal F}_1$ and
$\widehat{\mathcal F}_2$, gives
$T_1,T_2\in\binom{[n]}t$ such that
\begin{equation}
 \mu_{p_i}
 \bigl(
 \widehat{\mathcal F}_i
 \mathbin\triangle
 \mathcal S_{T_i}
 \bigr)
 \leq
 C_{\mathrm{ind}}\varepsilon,
 \qquad i=1,2.
 \label{eq:stability-22}
\end{equation}

\textbf{We next show that these two centers are equal.}  The assumptions on
the biases give
\[
 p_1+p_2
 <
 \frac{2}{t+1}
 \leq1,
\]
so Lemma~\ref{lem:center-alignment} is applicable. Suppose, to the
contrary, that $T_1\neq T_2$. That lemma gives
\[
 \beta
 \leq
 \frac{
  \mu_{p_1}
  (\mathcal S_{T_1}\setminus\widehat{\mathcal F}_1)
 }{p_1^t}
 +
 \frac{
  \mu_{p_2}
  (\mathcal S_{T_2}\setminus\widehat{\mathcal F}_2)
 }{p_2^t}.
\]
For $i=1,2$,
\[
 \mathcal S_{T_i}\setminus\widehat{\mathcal F}_i
 \subseteq
 \widehat{\mathcal F}_i
 \mathbin\triangle
 \mathcal S_{T_i}.
\]
Therefore, \eqref{eq:stability-22} implies
\begin{align*}
 \beta
 \leq
 C_{\mathrm{ind}}\varepsilon
 \bigl(p_1^{-t}+p_2^{-t}\bigr)=
 C_{\mathrm{ind}}Q\varepsilon<
 C_{\mathrm{ind}}Q
 \frac{\beta}{2C_{\mathrm{ind}}Q}
 =
 \frac{\beta}{2},
\end{align*}
where the strict inequality follows from the definition of
$\varepsilon_*$. This is impossible because $\beta>0$. Hence,
\[
 T_1=T_2=:T.
\]

\textbf{It remains to transfer the approximation from the upsets
to the original families.} Since $\mathcal F_i\subseteq\widehat{\mathcal F}_i$, we have
$
 \mathcal F_i\setminus\mathcal S_T
 \subseteq
 \widehat{\mathcal F}_i\setminus\mathcal S_T
 \subseteq
 \widehat{\mathcal F}_i
 \mathbin\triangle
 \mathcal S_T.
$
Moreover,
$
 \mathcal S_T\setminus\mathcal F_i
 \subseteq
 \bigl(
 \mathcal S_T\setminus\widehat{\mathcal F}_i
 \bigr)
 \cup
 \bigl(
 \widehat{\mathcal F}_i\setminus\mathcal F_i
 \bigr).
$
Thus,
\[
 \mathcal F_i\mathbin\triangle\mathcal S_T
 \subseteq
 \bigl(
 \widehat{\mathcal F}_i
 \mathbin\triangle
 \mathcal S_T
 \bigr)
 \cup
 \bigl(
 \widehat{\mathcal F}_i\setminus\mathcal F_i
 \bigr).
\]
Combining \eqref{eq:stability-21} and
\eqref{eq:stability-22}, we obtain
\begin{align*}
 \mu_{p_i}
 \bigl(\mathcal F_i\mathbin\triangle\mathcal S_T\bigr)
 &\leq
 \mu_{p_i}
 \bigl(
 \widehat{\mathcal F}_i
 \mathbin\triangle
 \mathcal S_T
 \bigr)
 +
 \mu_{p_i}
 \bigl(
 \widehat{\mathcal F}_i\setminus\mathcal F_i
 \bigr)\\
 &<
 (C_{\mathrm{ind}}+2)\varepsilon\leq
 C\varepsilon,
 \qquad i=1,2.
\end{align*}

\textbf{Finally, suppose that $\varepsilon\geq\varepsilon_*$.} Since
$n\geq t$, choose any $T\in\binom{[n]}t$. For $i=1,2$,
\[
 \mu_{p_i}
 \bigl(\mathcal F_i\mathbin\triangle\mathcal S_T\bigr)
 \leq1.
\]
By \eqref{eq:stability-final-constant} and
$\varepsilon\geq\varepsilon_*$,
\[
 1
 <
 2
 \leq
 \frac{2\varepsilon}{\varepsilon_*}
 \leq
 C\varepsilon.
\]
Hence,
\[
 \mu_{p_i}
 \bigl(\mathcal F_i\mathbin\triangle\mathcal S_T\bigr)
 <
 C\varepsilon,
 \qquad i=1,2.
\]
Therefore, the constant $C=C(t,p_1,p_2)$ defined in
\eqref{eq:stability-final-constant} works for every
$\varepsilon>0$.
\end{proof}

\section{Cross-intersection for integer sequences}\label{sec:integer-sequences}

\subsection{Proof of Theorem~\ref{thm:integer-sequences}}

The following statement is obtained from Lemma~5.2 of Frankl, Lee,
Siggers and Tokushige~\cite{FranklLeeSiggersTokushige}. The first two
assertions follow from parts~(i)--(iii) and~(v), while the final
assertion follows by applying the argument in part~(iv) successively
to the finite procedure in part~(iii).

\begin{lemma}\label{lem:tokushige-reduction}
Let $n\geq t\geq1$ and $m\geq2$. If
$\mathcal H_1,\mathcal H_2\subseteq[m]^n$ are cross
$t$-intersecting, then there exist cross $t$-intersecting families
$\mathcal H'_1,\mathcal H'_2\subseteq[m]^n$ satisfying
\[
 |\mathcal H'_i|=|\mathcal H_i|,
 \qquad i=1,2.
\]
For $i=1,2$, let
\[
 \mathcal F_i
 =
 \left\{
  \{r\in[n]:x_r=1\}:
  \boldsymbol{x}=(x_1,\ldots,x_n)\in\mathcal H'_i
 \right\}.
\]
Then $\mathcal F_1$ and $\mathcal F_2$ are cross
$t$-intersecting families in $2^{[n]}$.
The families $\mathcal H'_1$ and $\mathcal H'_2$ may be chosen with
the following additional property. If $m>t+1$ and there exists
$T\in\binom{[n]}t$ such that
\[
 \mathcal H'_1=\mathcal H'_2
 =
 \left\{
  \boldsymbol{x}\in[m]^n:
  x_r=1\text{ for every }r\in T
 \right\},
\]
then there exist $T'\in\binom{[n]}t$ and
$a_r\in[m]$ for every $r\in T'$ such that
\[
 \mathcal H_1=\mathcal H_2
 =
 \left\{
  \boldsymbol{x}\in[m]^n:
  x_r=a_r\text{ for every }r\in T'
 \right\}.
\]
\end{lemma}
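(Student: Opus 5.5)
The plan is to obtain $\mathcal H'_1,\mathcal H'_2$ from $\mathcal H_1,\mathcal H_2$ by symbol compressions, i.e.\ the $[m]^n$-analogue of the $ij$-shift of Section~\ref{sec:compression}. For a coordinate $r\in[n]$ and a symbol $a\in\{2,\ldots,m\}$, let $S_{r,a}$ replace $x_r=a$ by $x_r=1$ whenever the resulting sequence is not already in the family. We apply each $S_{r,a}$ simultaneously to $\mathcal H_1$ and $\mathcal H_2$.

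The first step is to check two facts about $S_{r,a}$. It is injective on each family, so $|\mathcal H_i|$ is preserved. It also preserves cross $t$-intersection, by the usual four-case check comparing the agreement sets of a pair before and after compression; this is exactly the check behind Lemma~\ref{lem:shifting-reduction}. Each application that changes something strictly increases the total number of entries equal to $1$, so after finitely many steps we reach a pair $(\mathcal H'_1,\mathcal H'_2)$ that is \emph{stable}. By this we mean that whenever $\boldsymbol x\in\mathcal H'_i$ and $x_r\neq1$, the sequence obtained by setting $x_r=1$ also lies in $\mathcal H'_i$. This yields the first assertion.

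The second step, and the one I expect to be the main obstacle, is to show that stability forces the $1$-sets to be cross $t$-intersecting. That is, for $\boldsymbol x\in\mathcal H'_1$ and $\boldsymbol y\in\mathcal H'_2$ we need $|\{r:x_r=y_r=1\}|\geq t$. The difficulty is that stability only lets us move entries \emph{to} $1$. That alone cannot destroy an agreement $x_r=y_r=a\neq1$, so it cannot convert such an agreement into a contradiction. I would first try the approach of \cite{FranklLeeSiggersTokushige}. One chooses a counterexample pair $(\boldsymbol x,\boldsymbol y)$ that is extremal, say with the fewest agreements in symbols other than $1$. One then uses the fact that the compression history can be organised so that each pre-image of a stable configuration is controlled. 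Concretely, one tracks, along the finite compression procedure, the sequences $\boldsymbol x^{(0)}\mapsto\boldsymbol x^{(1)}\mapsto\cdots$. The aim is to show that an agreement in a symbol $a\neq1$ at coordinate $r$ in the final pair must have been blocked at the moment $S_{r,a}$ was applied. This blocking produces a partner sequence whose agreement with the other family member is strictly smaller, contradicting cross $t$-intersection of the earlier pair. This is the content I would take from parts (i)--(iii) and (v) of \cite[Lemma~5.2]{FranklLeeSiggersTokushige}.

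For the final assertion, we reverse the compressions one at a time, as in Lemma~\ref{lem:inverse-shift-star}. It suffices to prove one step: if $S_{r,a}(\mathcal G_1)=S_{r,a}(\mathcal G_2)$ is a subcube $\{\boldsymbol x:x_q=a_q\ \forall q\in T\}$ and $\mathcal G_1,\mathcal G_2$ are cross $t$-intersecting, then $\mathcal G_1=\mathcal G_2$ is again such a subcube. If $r\notin T$, the subcube is fixed by $S_{r,a}$ and nothing changes. If $r\in T$, each fibre over the coordinates outside $r$ contains exactly one member, with $r$-entry $1$ or $a$. Suppose that within some $\mathcal G_i$ both values occur across different fibres. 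Then, using $m>t+1$ and the free coordinates outside $T$, I would construct $\boldsymbol x\in\mathcal G_1$ and $\boldsymbol y\in\mathcal G_2$ that disagree at $r$ and at every free coordinate. They then agree in at most $t-1$ places, a contradiction. Similarly, $\mathcal G_1$ and $\mathcal G_2$ must use the same value at $r$. Iterating this step back through the procedure gives $T'$ and the values $a_r$.
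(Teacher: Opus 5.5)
Your architecture (compress every symbol toward $1$, stop at a stable pair, then undo the compressions one at a time for the equality statement) is the one the paper imports from Lemma~5.2 of Frankl, Lee, Siggers and Tokushige. Your first and third steps are essentially sound.

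\textbf{The gap is your second step.} You leave it to the citation after misdiagnosing it. Your claim that stability cannot destroy an agreement $x_r=y_r=a\neq1$ is false. Stability is a property of each family separately, so you may set $x_r=1$ while keeping $\boldsymbol y$ fixed, and then $r$ is no longer an agreement. Explicitly, for $\boldsymbol x\in\mathcal H'_1$ and $\boldsymbol y\in\mathcal H'_2$, put $D=\{r:x_r=y_r\neq1\}$ and let $\boldsymbol x^*$ be $\boldsymbol x$ with every entry in $D$ replaced by $1$. Applying stability $|D|$ times gives $\boldsymbol x^*\in\mathcal H'_1$. Moreover, $\boldsymbol x^*$ and $\boldsymbol y$ agree exactly on $\{r:x_r=y_r=1\}$: on $D$ they now read $1$ against $y_r\neq1$, and nothing else changed. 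Cross $t$-intersection of $(\mathcal H'_1,\mathcal H'_2)$ therefore gives $|\{r:x_r=y_r=1\}|\geq t$, for every $m\geq2$. This one-coordinate move is also all that your extremal choice (a counterexample with fewest non-$1$ agreements) needed. Discard the history-tracking sketch. It aims at the wrong target, since non-$1$ agreements in the final pair are perfectly legitimate; the final star $\{\boldsymbol x:x_q=1\ \forall q\in T\}$ is full of them. The partner sequence it looks for is supplied directly by stability of the final pair.

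\textbf{Three smaller points in the reversal step.} First, for $r\notin T$, the fact that $S_{r,a}$ fixes the subcube $\mathcal C$ is not the reason. Argue instead that each preimage $\mathcal G_i$ lies in $\mathcal C$ and has size $|\mathcal C|$. Unmoved members lie in $S_{r,a}(\mathcal G_i)=\mathcal C$, and a moved member differs from its image only at the unconstrained coordinate $r$. Second, for $r\in T$, your fibre description requires the current value of $\mathcal C$ at $r$ to be $1$. If that value is some $c\notin\{1,a\}$, nothing was moved and $\mathcal G_i=\mathcal C$; the value $c=a$ is impossible for nonempty families. This case does arise when several symbols are compressed at the same coordinate. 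Third, you need a member of $\mathcal G_2$ whose free part differs in every coordinate from the free parts of two given members of $\mathcal G_1$. This is exactly where $m\geq3$, guaranteed by $m>t+1$, is used. The case $n=t$ is trivial.
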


We now apply this reduction to prove
Theorem~\ref{thm:integer-sequences}.

\begin{proof}[Proof of Theorem~\ref{thm:integer-sequences}]
Apply Lemma~\ref{lem:tokushige-reduction}, and retain the notation
$\mathcal H'_1,\mathcal H'_2,\mathcal F_1,\mathcal F_2$ introduced
there. Fix $F\subseteq[n]$. A sequence
$\boldsymbol{x}=(x_1,\ldots,x_n)\in[m]^n$ satisfies
$
 \{r\in[n]:x_r=1\}=F
$
precisely when $x_r=1$ for every $r\in F$ and
$x_r\in\{2,\ldots,m\}$ for every $r\notin F$. Hence, there are
exactly $(m-1)^{n-|F|}$ such sequences. Consequently, for
$i=1,2$,
\begin{align}
 |\mathcal H_i|
 &=|\mathcal H'_i| \notag\leq
 \sum_{F\in\mathcal F_i}(m-1)^{n-|F|} \notag\\
 &=m^n\sum_{F\in\mathcal F_i}
 \left(\frac1m\right)^{|F|}
 \left(1-\frac1m\right)^{n-|F|} \notag\\
 &=m^n\mu_{1/m}(\mathcal F_i).
 \label{eq:sequence-counting}
\end{align}

Since $m\geq t+1$, we have $1/m\leq1/(t+1)$. The families
$\mathcal F_1, \mathcal F_2\subseteq 2^{[n]}$ are cross $t$-intersecting, so
Theorem~\ref{thm:product-measure}, applied with
$p_1=p_2=1/m$, gives
\[
 \mu_{1/m}(\mathcal F_1)
 \mu_{1/m}(\mathcal F_2)
 \leq
 \left(\frac1m\right)^{2t}.
\]
Together with \eqref{eq:sequence-counting}, this yields
\[
 |\mathcal H_1||\mathcal H_2|
 \leq
 m^{2n}\mu_{1/m}(\mathcal F_1)
 \mu_{1/m}(\mathcal F_2)\leq
 m^{2n}\left(\frac1m\right)^{2t}
 =m^{2(n-t)}.
\]

We now consider the equality case when $m>t+1$. Suppose that
$
 |\mathcal H_1||\mathcal H_2|=m^{2(n-t)}.
$
By \eqref{eq:sequence-counting} and
Theorem~\ref{thm:product-measure},
\[
 m^{2(n-t)}
 \leq
 m^{2n}\mu_{1/m}(\mathcal F_1)
 \mu_{1/m}(\mathcal F_2)
 \leq
 m^{2(n-t)}.
\]
Thus equality holds throughout. Since
$1/m<1/(t+1)$, the equality statement in
Theorem~\ref{thm:product-measure} gives a set
$T\in\binom{[n]}t$ such that
\[
 \mathcal F_1=\mathcal F_2=\mathcal S_T.
\]
It follows from the definition of $\mathcal F_i$ that
\[
 \mathcal H'_i
 \subseteq
 \left\{
  \boldsymbol{x}\in[m]^n:
  x_r=1\text{ for every }r\in T
 \right\},
 \qquad i=1,2.
\]
The family on the right has size $m^{n-t}$. Since
$
 |\mathcal H'_1||\mathcal H'_2|
 =
 |\mathcal H_1||\mathcal H_2|
 =
 m^{2(n-t)},
$
both inclusions must be equalities. Therefore,
\[
 \mathcal H'_1=\mathcal H'_2
 =
 \left\{
  \boldsymbol{x}\in[m]^n:
  x_r=1\text{ for every }r\in T
 \right\}.
\]
The final assertion of Lemma~\ref{lem:tokushige-reduction} now yields
a set $T'\in\binom{[n]}t$ and symbols $a_r\in[m]$, $r\in T'$, such
that
\[
 \mathcal H_1=\mathcal H_2
 =
 \left\{
  \boldsymbol{x}\in[m]^n:
  x_r=a_r\text{ for every }r\in T'
 \right\}.
\]
Conversely, any family of this form has size $m^{n-t}$, and any two
of its members agree on every coordinate in $T'$. Hence two
identical families of this form are cross $t$-intersecting and attain
equality.
\end{proof}

\subsection{Proof of Theorem~\ref{thm:vector-sequence}}

For $P\subseteq[m]$ and
$\boldsymbol{x}=(x_1,\ldots,x_n),\boldsymbol{y}=(y_1,\ldots,y_n)\in[m]^n$, write
$\boldsymbol{x}\preceq_P\boldsymbol{y}$ if $y_r=x_r$ for every
$r\in[n]$ such that $x_r\in P$.
In other words, if $x_r\in P$, then the entry of $\boldsymbol{y}$ at coordinate $r$
cannot be changed, whereas coordinates with $x_r\notin P$ may be
changed arbitrarily.
A family $\mathcal H\subseteq[m]^n$ is called
\textit{$P$-complete} if $\boldsymbol{y}\in\mathcal H$ whenever
$\boldsymbol{x}\in\mathcal H$ and
$\boldsymbol{x}\preceq_P\boldsymbol{y}$.
For $\mathcal H\subseteq[m]^n$, define
\[
 \mathcal H^P
 =
 \left\{
  \boldsymbol{y}\in[m]^n:
  \boldsymbol{x}\preceq_P\boldsymbol{y}
  \text{ for some }\boldsymbol{x}\in\mathcal H
 \right\}.
\]
Then $\mathcal H^P$ is $P$-complete and
$\mathcal H\subseteq\mathcal H^P$.

We use the following correlation inequality of Frankl and
Kupavskii~\cite{FranklKupavskii}.

\begin{lemma}
\label{lem:frankl-kupavskii-correlation}
Let $P,Q\subseteq[m]$ be nonempty and disjoint. If
$\mathcal H_1,\mathcal H_2\subseteq[m]^n$ are respectively
$P$-complete and $Q$-complete, then
\[
 \frac{|\mathcal H_1\cap\mathcal H_2|}{m^n}
 \leq
 \frac{|\mathcal H_1|}{m^n}
 \frac{|\mathcal H_2|}{m^n}.
\]
\end{lemma}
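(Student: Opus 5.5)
We argue by induction on $n$, slicing along the last coordinate. The induction reduces the claim to a one-dimensional inequality of Chebyshev type. That inequality holds because $P\cap Q=\emptyset$: the two section-profiles can only exceed their baseline values on disjoint sets of symbols.

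\textbf{Base case.} For $n=1$, a $P$-complete family $\mathcal H_1\subseteq[m]$ either contains some symbol $a\notin P$, in which case it is all of $[m]$, or satisfies $\mathcal H_1\subseteq P$. Likewise, either $\mathcal H_2=[m]$ or $\mathcal H_2\subseteq Q$. If one family is $[m]$, the inequality is an equality. Otherwise $\mathcal H_1\cap\mathcal H_2\subseteq P\cap Q=\emptyset$, and the left-hand side vanishes. (One could equally start the induction at $n=0$, where the claim is trivial.)

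\textbf{Structure of the sections.} For $b\in[m]$, let $\mathcal A_b=\{\boldsymbol x'\in[m]^{n-1}:(\boldsymbol x',b)\in\mathcal H_1\}$, and define $\mathcal B_b$ analogously from $\mathcal H_2$. Each $\mathcal A_b$ is $P$-complete in $[m]^{n-1}$, and each $\mathcal B_b$ is $Q$-complete.

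Suppose $a\notin P$ and $(\boldsymbol x',a)\in\mathcal H_1$. Then $P$-completeness allows the last coordinate to be changed arbitrarily, so $\boldsymbol x'\in\mathcal A_b$ for every $b$. Applying this in both directions for two symbols outside $P$ shows that all $\mathcal A_a$ with $a\notin P$ are equal to one common family $\mathcal A_0$. Moreover, $\mathcal A_0\subseteq\mathcal A_b$ for every $b\in P$. Symmetrically, $\mathcal B_c=\mathcal B_0$ for all $c\notin Q$, and $\mathcal B_0\subseteq\mathcal B_c$ for $c\in Q$.

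\textbf{Reduction to one dimension.} Write $f(b)=|\mathcal A_b|/m^{n-1}$ and $g(b)=|\mathcal B_b|/m^{n-1}$, and set $\alpha=|\mathcal A_0|/m^{n-1}$ and $\beta=|\mathcal B_0|/m^{n-1}$. Then $f-\alpha\geq0$ vanishes outside $P$, and $g-\beta\geq0$ vanishes outside $Q$.

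By the induction hypothesis applied to each pair $(\mathcal A_b,\mathcal B_b)$,
\[
 \frac{|\mathcal H_1\cap\mathcal H_2|}{m^n}
 =\frac1m\sum_b\frac{|\mathcal A_b\cap\mathcal B_b|}{m^{n-1}}
 \leq\frac1m\sum_b f(b)g(b).
\]
It therefore remains to show
\[
 \frac1m\sum_b f(b)g(b)\leq\frac1{m^2}\sum_b f(b)\sum_b g(b).
\]

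Expand $fg=(f-\alpha)(g-\beta)+\beta f+\alpha g-\alpha\beta$. The term $\sum_b(f-\alpha)(g-\beta)$ is zero, because the two supports lie in the disjoint sets $P$ and $Q$. A direct computation then gives
\[
 \frac1{m^2}\sum_b f\sum_b g-\frac1m\sum_b fg
 =\frac1{m^2}\sum_b(f-\alpha)\sum_b(g-\beta)\geq0.
\]
This completes the induction.

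The only nonroutine step is recognising that the sections are constant off $P$ (respectively off $Q$) and are minimal there. Once that is established, the rest is the elementary expansion above.
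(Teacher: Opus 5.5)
Your proof is correct. The paper gives no proof of this lemma: it imports it from Frankl and Kupavskii and uses it only as a black box in the proof of Theorem~\ref{thm:vector-sequence}. Your argument is therefore a self-contained addition, not a reconstruction.

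What you give is the natural extension of Kleitman's inductive proof of the Harris--Kleitman inequality. For $m=2$, $P=\{1\}$, $Q=\{2\}$, a $P$-complete family corresponds to an increasing family of sets $\{r:x_r=1\}$ and a $Q$-complete family to a decreasing one, so the lemma is exactly Harris--Kleitman in that case. In Kleitman's argument the two sections are nested, and one finishes with Chebyshev's sum inequality. In your setting the sections are only known to be constant and minimal off $P$ (respectively off $Q$). The hypothesis $P\cap Q=\emptyset$ then turns the Chebyshev step into the exact identity
\[
\frac1{m^2}\sum_b f(b)\sum_b g(b)-\frac1m\sum_b f(b)g(b)=\frac1{m^2}\sum_b\bigl(f(b)-\alpha\bigr)\sum_b\bigl(g(b)-\beta\bigr),
\]
which I checked, and this identity makes transparent exactly where disjointness is used.

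One point you should state explicitly: $\mathcal A_0$ and $\mathcal B_0$ are well defined only because $[m]\setminus P\supseteq Q\neq\emptyset$ and $[m]\setminus Q\supseteq P\neq\emptyset$. This is where the nonemptiness hypotheses enter your argument. With that remark added, your proof makes Section~\ref{sec:integer-sequences} self-contained at the cost of about half a page, whereas the paper's citation keeps the exposition short.
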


\begin{proof}[Proof of Theorem~\ref{thm:vector-sequence}]
Put
\[
 \boldsymbol{t}=(t_1,\ldots,t_m),
 \qquad
 s=\sum_{i=1}^m t_i.
\]
For a vector $\boldsymbol{u}=(u_1,\ldots,u_m)$ of nonnegative
integers, define
\[
 q(n,m,\boldsymbol{u})
 =
 \max\left\{
  \frac{|\mathcal A_1||\mathcal A_2|}{m^{2n}}:
  \begin{array}{c}
   \mathcal A_1,\mathcal A_2\subseteq[m]^n\text{ are cross}
   (u_1,\ldots,u_m)\text{-intersecting}
  \end{array}
 \right\}.
\]
For $P\subseteq[m]$, let
\[
 \boldsymbol{t}_P=(u_1,\ldots,u_m),
 \qquad
 u_i=
 \begin{cases}
  t_i,&i\in P,\\
  0,&i\notin P.
 \end{cases}
\]

\textbf{We first establish the factorization used in the proof.} Let
$P,Q\subseteq[m]$ be nonempty and satisfy
\[
 P\cap Q=\emptyset,
 \qquad
 P\cup Q=[m].
\]
Choose cross $\boldsymbol{t}$-intersecting families
$\mathcal A_1,\mathcal A_2$ attaining
$q(n,m,\boldsymbol{t})$. For $\ell=1,2$, the families
$\mathcal A_\ell^P$ and $\mathcal A_\ell^Q$ are respectively
$P$-complete and $Q$-complete. Moreover,
$\mathcal A_1^P,\mathcal A_2^P$ are cross
$\boldsymbol{t}_P$-intersecting. Indeed, let
$\boldsymbol{y}\in\mathcal A_1^P$ and
$\boldsymbol{z}\in\mathcal A_2^P$. By the definition of
$\mathcal A_1^P$ and $\mathcal A_2^P$, there exist
$\boldsymbol{x}\in\mathcal A_1$ and
$\boldsymbol{w}\in\mathcal A_2$ such that
\[
 \boldsymbol{x}\preceq_P\boldsymbol{y}
 \qquad\text{and}\qquad
 \boldsymbol{w}\preceq_P\boldsymbol{z}.
\]
Hence, every coordinate satisfying $x_r=w_r=i\in P$ also satisfies
$y_r=z_r=i$. Therefore, $\mathcal A_1^P$ and
$\mathcal A_2^P$ are cross $\boldsymbol{t}_P$-intersecting.
Since
$
 \mathcal A_\ell
 \subseteq
 \mathcal A_\ell^P\cap\mathcal A_\ell^Q
$
for $\ell=1,2$, 
Lemma~\ref{lem:frankl-kupavskii-correlation} gives
\begin{align}
 q(n,m,\boldsymbol{t})
 &=
 \frac{|\mathcal A_1||\mathcal A_2|}{m^{2n}}
 \leq
 \prod_{\ell=1}^2
 \frac{|\mathcal A_\ell^P\cap\mathcal A_\ell^Q|}{m^n}
 \notag\\
 &\leq
 \frac{|\mathcal A_1^P||\mathcal A_2^P|}{m^{2n}}
 \frac{|\mathcal A_1^Q||\mathcal A_2^Q|}{m^{2n}}
 \notag\\
 &\leq
 q(n,m,\boldsymbol{t}_P)
 q(n,m,\boldsymbol{t}_Q).
 \label{eq:q-factorization}
\end{align}
Applying \eqref{eq:q-factorization} successively gives
\[
 q(n,m,\boldsymbol{t})
 \leq
 \prod_{i=1}^m q(n,m,\boldsymbol{t}_{\{i\}}).
\]

If $t_i=0$, then
$
 q(n,m,\boldsymbol{t}_{\{i\}})=1.
$
If $t_i>0$, any pair of cross
$\boldsymbol{t}_{\{i\}}$-intersecting families is also cross
$t_i$-intersecting, because every two sequences from the respective
families agree in at least $t_i$ coordinates whose common entry is
$i$. Since
$
 n\geq s\geq t_i
 $
and
$
 m\geq t_i+1,
$
Theorem~\ref{thm:integer-sequences} yields
\[
 q(n,m,\boldsymbol{t}_{\{i\}})
 \leq
 \frac{m^{2(n-t_i)}}{m^{2n}}
 =m^{-2t_i}.
\]
Consequently,
\[
 q(n,m,\boldsymbol{t})
 \leq
 \prod_{i=1}^m m^{-2t_i}
 =m^{-2s}.
\]
Therefore,
\[
 |\mathcal H_1||\mathcal H_2|
 \leq
 m^{2n}q(n,m,\boldsymbol{t})
 \leq
 m^{2(n-s)}.
\]

\textbf{We now determine the equality cases.}  Assume that
$
 m>t_i+1
$
 for every $i\in[m]$ with $t_i>0$,
and that
\[
 |\mathcal H_1||\mathcal H_2|=m^{2(n-s)}.
\]
In particular, both families are nonempty. For
$\ell\in\{1,2\}$ and $i\in[m]$, put
\[
 \mathcal H_{\ell,i}=\mathcal H_\ell^{\{i\}}.
\]
For each $i\in[m]$, the families $\mathcal H_{1,i}$ and
$\mathcal H_{2,i}$ are cross
$\boldsymbol{t}_{\{i\}}$-intersecting. In particular, they are cross
$t_i$-intersecting in the usual sense.
For $2\leq r\leq m$, the family
$
 \bigcap_{i=1}^{r-1}\mathcal H_{\ell,i}
$
is $[r-1]$-complete, whereas $\mathcal H_{\ell,r}$ is
$\{r\}$-complete. Applying
Lemma~\ref{lem:frankl-kupavskii-correlation} successively, and using
$
 \mathcal H_\ell
 \subseteq
 \bigcap_{i=1}^m\mathcal H_{\ell,i},
$
we obtain
\begin{equation}
 \frac{|\mathcal H_\ell|}{m^n}
 \leq
 \prod_{i=1}^m\frac{|\mathcal H_{\ell,i}|}{m^n},
 \qquad \ell=1,2.
 \label{eq:completion-correlation}
\end{equation}
For $t_i>0$, Theorem~\ref{thm:integer-sequences} gives
\[
 \frac{|\mathcal H_{1,i}||\mathcal H_{2,i}|}{m^{2n}}
 \leq m^{-2t_i}.
\]
The same inequality is immediate when $t_i=0$. Hence,
\[
 \frac{|\mathcal H_1||\mathcal H_2|}{m^{2n}}
 \leq
 \prod_{i=1}^m
 \frac{|\mathcal H_{1,i}||\mathcal H_{2,i}|}{m^{2n}}\leq
 \prod_{i=1}^m m^{-2t_i}
 =m^{-2s}.
\]
The equality assumption shows that equality holds throughout. All
$\mathcal H_{\ell,i}$ are nonempty, so every factor in the product
is positive. Since each factor is at most $m^{-2t_i}$, we must have
\begin{equation}
 |\mathcal H_{1,i}||\mathcal H_{2,i}|
 =m^{2(n-t_i)},
 \qquad i\in[m].
 \label{eq:single-completion-equality}
\end{equation}

Suppose first that $t_i>0$. By the equality statement in
Theorem~\ref{thm:integer-sequences}, there exist
$T_i\in\binom{[n]}{t_i}$ and symbols $a_{i,r}\in[m]$, $r\in T_i$,
such that
\[
 \mathcal H_{1,i}=
 \mathcal H_{2,i}
 =
 \left\{
  \boldsymbol{x}\in[m]^n:
  x_r=a_{i,r}\text{ for every }r\in T_i
 \right\}.
\]
We claim that $a_{i,r}=i$ for every $r\in T_i$. Suppose that
$a_{i,r_0}\neq i$ for some $r_0\in T_i$. Choose a symbol
$j\in[m]\setminus\{i\}$, and define
$\boldsymbol{x}\in[m]^n$ by
\[
 x_r=
 \begin{cases}
  a_{i,r},&r\in T_i,\\
  j,&r\notin T_i.
 \end{cases}
\]
Then $\boldsymbol{x}$ belongs to both $\mathcal H_{1,i}$ and
$\mathcal H_{2,i}$. However, the coordinates at which the two
copies of $\boldsymbol{x}$ have common entry $i$ are precisely
\[
 \{r\in T_i:a_{i,r}=i\}.
\]
Since $a_{i,r_0}\neq i$ and $|T_i|=t_i$, this set has size at most
$t_i-1$, contradicting the fact that $\mathcal H_{1,i}$ and
$\mathcal H_{2,i}$ are  cross
$\boldsymbol{t}_{\{i\}}$-intersecting. Thus,
$a_{i,r}=i$ for every $r\in T_i$. Therefore,
\[
 \mathcal H_{1,i}=
 \mathcal H_{2,i}
 =
 \left\{
  \boldsymbol{x}\in[m]^n:
  x_r=i\text{ for every }r\in T_i
 \right\}.
\]
If $t_i=0$, let $T_i=\emptyset$. In this case,
\eqref{eq:single-completion-equality} implies
\[
 \mathcal H_{1,i}=\mathcal H_{2,i}=[m]^n,
\]
so the preceding description remains valid.

The sets $T_1,\ldots,T_m$ are pairwise disjoint. Indeed,
$\mathcal H_1$ is nonempty and is contained in
$\bigcap_{i=1}^m\mathcal H_{1,i}$, whereas a coordinate belonging
to $T_i\cap T_j$ for distinct $i,j\in[m]$ would be required to have
both entries $i$ and $j$. Therefore,
\[
 \mathcal H_1,\mathcal H_2
 \subseteq
 \mathcal C
 :=
 \left\{
  \boldsymbol{x}\in[m]^n:
  x_r=i
  \text{ for every }i\in[m]\text{ and }r\in T_i
 \right\}.
\]
Since $|\mathcal C|=m^{n-s}$ and
$
 |\mathcal H_1||\mathcal H_2|
 =m^{2(n-s)}
 =|\mathcal C|^2,
$
both inclusions are equalities. Hence,
\[
 \mathcal H_1=\mathcal H_2=\mathcal C.
\]

Conversely, let $T_1,\ldots,T_m\subseteq[n]$ be pairwise disjoint
and satisfy $|T_i|=t_i$ for every $i\in[m]$. The family
\[
 \left\{
  \boldsymbol{x}\in[m]^n:
  x_r=i
  \text{ for every }i\in[m]\text{ and }r\in T_i
 \right\}
\]
has size $m^{n-s}$, and any two of its members have at least $t_i$
common coordinates equal to $i$ for every $i\in[m]$. Thus, two
identical copies of this family attain equality.
\end{proof}

The corollary follows by representing every subset of $[n]$ as a
binary sequence.

\begin{proof}[Proof of Corollary~\ref{cor:cross-iu}]
Recall that, for each $F\subseteq[n]$, the sequence
$
 \boldsymbol{x}(F)
 =
 \bigl(x_1(F),\ldots,x_n(F)\bigr)\in[2]^n
$
is defined by
\[
 x_r(F)=
 \begin{cases}
  1,&r\in F,\\
  2,&r\notin F.
 \end{cases}
\]
For $\ell=1,2$, define
\[
 \mathcal H_\ell
 =
 \{\boldsymbol{x}(F):F\in\mathcal F_\ell\}.
\]
Then
$
 |\mathcal H_\ell|=|\mathcal F_\ell|
$
for $\ell=1,2.$
Let $F_1\in\mathcal F_1$ and $F_2\in\mathcal F_2$. Since
$F_1\cap F_2\neq\emptyset$, there exists
$r_1\in F_1\cap F_2$, and hence
$
 x_{r_1}(F_1)=x_{r_1}(F_2)=1.
$
Similarly, since $F_1\cup F_2\neq[n]$, there exists
$r_2\in[n]\setminus(F_1\cup F_2)$, and hence
$
 x_{r_2}(F_1)=x_{r_2}(F_2)=2.
$
Thus, $\mathcal H_1$ and $\mathcal H_2$ are cross
$(1,1)$-intersecting. Applying
Theorem~\ref{thm:vector-sequence} with $m=2$ and
$t_1=t_2=1$, we obtain
\[
 |\mathcal F_1||\mathcal F_2|
 =
 |\mathcal H_1||\mathcal H_2|
 \leq
 2^{2(n-2)}
 =
 2^{2n-4},
\]
as desired.
\end{proof}

\section*{Declaration of competing interest}
The authors declare that they have no competing interests.

\section*{Data availability}
No data was used for the research described in this article.

\section*{Acknowledgments}
%The authors would like to express their sincere thanks to the referee for the valuable suggestions which greatly improved the presentation of the %manuscript.
The authors thank Yongtao Li and Zhiyi Liu for carefully reading the manuscript and
for several helpful comments. In preparing the manuscript, the authors
used ChatGPT as an auxiliary tool to discuss computational and technical
details and to edit parts of the exposition.

\end{document}